\theoremstyle{thmstyleone}%
\newtheorem{theorem}{Theorem}[section]
\newtheorem*{theorem*}{Theorem}
\setlist[enumerate]{label=(\arabic*)}
\numberwithin{equation}{section}
\def \EE#1#2{\mathbb E_{#1,#2}}
\newcommand{\C}{\mathbb C}
\newcommand{\F}{\mathbb F}
\newcommand{\Z}{\mathbb Z}
\newcommand{\N}{\mathbb N}
\newcommand{\Q}{\mathbb Q}
\renewcommand{\H}{\mathbb H}
\newcommand{\E}{\widehat E_2}
\newcommand{\G}{\Gamma}
\renewcommand{\a}{\alpha}
\renewcommand{\b}{\beta}
\renewcommand{\Im}{\operatorname{Im}}
\newcommand{\im}{\operatorname{Im}}
\renewcommand{\pmod}[1]{\ (\text{mod}\ #1)}
\DeclareMathOperator{\SL}{SL}
\DeclareMathOperator{\PSL}{PSL}
\DeclareMathOperator{\lcm}{lcm}
\begin{document}

\AddToHook{env/corollary/begin}{\zcsetup{countertype={theorem=corollary}}} 
\newtheorem{corollary}[theorem]{Corollary}
\newtheorem*{corollary*}{Corollary}

\AddToHook{env/lemma/begin}{\zcsetup{countertype={theorem=lemma}}} 
\newtheorem{lemma}[theorem]{Lemma}
\newtheorem*{lemma*}{Lemma}

\AddToHook{env/sublemma/begin}{\zcsetup{countertype={theorem=sublemma}}} 
\newtheorem{sublemma}{}[theorem]
\newtheorem*{sublemma*}{}

\AddToHook{env/proposition/begin}{\zcsetup{countertype={theorem=proposition}}} 
\newtheorem{proposition}[theorem]{Proposition}
\newtheorem*{proposition*}{Proposition}

\AddToHook{env/conjecture/begin}{\zcsetup{countertype={theorem=conjecture}}} 
\newtheorem{conjecture}[theorem]{Conjecture}
\newtheorem*{conjecture*}{Conjecture}

\theoremstyle{thmstyletwo}%
\AddToHook{env/example/begin}{\zcsetup{countertype={theorem=example}}} 
\newtheorem{example}[theorem]{Example}
\newtheorem*{example*}{Example}
\theoremstyle{remark}
\AddToHook{env/remark/begin}{\zcsetup{countertype={theorem=remark}}} 
\newtheorem{remark}[theorem]{Remark}
\newtheorem*{remark*}{Remark}
\AddToHook{env/notation/begin}{\zcsetup{countertype={theorem=notation}}} 
\newtheorem{notation}[theorem]{Notation}
\newtheorem*{notation*}{Notation}
\newenvironment{newthm}[2][definition]
{\let\tempthm\relax%
	\theoremstyle{#1}%
	\newtheorem*{tempthm}{#2}%
	\tempthm}
{\endtempthm}

\theoremstyle{thmstylethree}%
\theoremstyle{definition}
\AddToHook{env/definition/begin}{\zcsetup{countertype={theorem=definition}}} 
\newtheorem{definition}[theorem]{Definition}
\newtheorem*{definition*}{Definition}

\title[ Relating elliptic curve point-counting and solutions of  quadratic forms with congruence conditions 

]
{
Relating elliptic curve point-counting and solutions of  quadratic forms with congruence conditions
}


\author*[]{\fnm{Koustav} \sur{Mondal}}\email{kmonda1@lsu.edu}



\affil*[]{\orgdiv{Department of Mathematics}, \orgname{Louisiana State University}, \orgaddress{\city{Baton Rouge}, \postcode{70803}, \state{LA}, \country{USA}}}




\abstract{In this paper, we analyze the theta series associated to the quadratic form $Q(\Vec{x}) \coloneqq x_1^2+x_2^2+x_3^2+x_4^2$ with congruence conditions on $x_i$ modulo $2,3,4$ and $6$. 
		By employing special operators on modular, non-holomorphic Eisenstein series of weight 2, we  construct a basis for Eisenstein space for levels $2^k, k\leq 7$, $3^{\ell}, \ell\leq 3$ and $p$, for odd prime $p$. 
		Using the relation between the trace of Frobenius on an elliptic curve and the Fourier coefficients of the cusp form part of theta series  corresponding to $Q$, we establish relation between the number of integer solutions to the equation $Q(\Vec{x}) = p$  and the number of $\F_p$-rational points on the associated elliptic curve under certain congruence conditions on $p$.}

\keywords{Point counting on elliptic curves, congruent quadratic forms, congruent theta series, Sieving and $V$ operators}


\pacs[MSC Classification]{Primary: 11N32,  11D09, Secondary: 11G20, 11F11, 11E25. }

\maketitle

\section{Introduction}

Analyzing representations of natural numbers by quadratic forms has a long history in mathematics.\cite{, Bhargava, Bhargava-Hanke,beli2022universalintegralquadraticforms,book:Cox,Dickson}.  In this paper, we prove that counts of representations by quadratic forms with various congruence conditions are linearly related to counting points on elliptic curves over finite fields. 

We begin by setting the notation and terminology for quadratic forms. A quadratic   form $Q(\vec{x})$, for $\vec{x}\in \Z^n$, is degree 2 homogeneous polynomial   in $n$ variables. We will only focus on quadratic forms with integer coefficients. Let $A\in M_n(\Z)$ be a positive-definite, 
symmetric matrix that satisfies
$A_{ii}\equiv 0\pmod{2}.$
Such matrices are called \emph{even}. 
We  say that $A$ is the associated matrix to the positive-definite quadratic form $Q(\vec{x})$  if 
\begin{equation}
	Q(\vec{x})=\frac{1}{2}\vec{x}^tA\vec{x}.
\end{equation}

A quadratic form $Q$ \emph{represents} a number $\ell\in \Z$ if there is exists some $\Vec{x}_0\in \Z^n$ such that $Q(\Vec{x}_0)=\ell$. Quadratic forms that represent all natural numbers are termed as \emph{universal quadratic forms}. Lagrange proved in 1770 that every positive integer can be expressed as the sum of four squares.  Following that, Ramanujan \cite{Ramanujan} gave a list of all universal quaternary forms . Later,  Bhargava and Hanke \cite{Bhargava, Bhargava-Hanke} proved all the cases for universal quaternary quadratic forms. 

For a positive-definite quadratic form $Q(\Vec{x})$, the associated theta series $\theta_Q$ is defined as
\begin{equation}
	\theta_Q(\tau)\coloneqq\sum_{\substack{\Vec{x}\in  \mathbb{Z}^n}}q^{Q(\Vec{x})}, \hspace{20pt}q\coloneqq e^{2\pi i \tau},  
\end{equation}
for $\tau \in \H \coloneqq \{z \in \C\mid \im(z)>0\}$. 

Employing associated theta series is a conventional approach to analyze representation of numbers by the quadratic form. Recall that modular forms are holomorphic functions on upper-half complex plane $\H$ that satisfy certain transformation properties with respect to the action of a congruence subgroup $\G\subset \SL_2(\Z)$.
Consequently, a modular form $f(\tau)$ has a Fourier expansion 
(see \zcref{sec:background} for details).   
Theta series, which are modular forms, provide insights into the representation of numbers by the associated quadratic forms by virtue of analysis of the coefficients of the Fourier series. 
\begin{definition}
	A \emph{congruent quadratic form} is a pair $(Q(\Vec{x}), (\Vec{a}, \Vec{S}))$ where $Q(\Vec{x})$ is a quadratic form with the congruence conditions on the arguments $x_i \equiv a_i \pmod{S_i}$. The theta series associated with a congruent quadratic form is called the \emph{congruent theta series} 
	\begin{equation}
		\theta_{(Q(\Vec{x}), (\Vec{a}, \Vec{S}))}(\tau)=\sum_{\substack{\Vec{x}\in \Z \\ \Vec{x}\equiv \Vec{a}\hspace{-2.5pt}\pmod{\Vec{S}}}}q^{Q(\Vec{x})}.
	\end{equation}
	
\end{definition}

\begin{example}
    Recall, the classic Jacobi theta functions (following mainly \cite{book:Zagier123}). 
\begin{align}\label{eq:jacobitheta}
	\theta_2(\tau) \coloneqq \sum_{n \in \Z}q^{\frac{(2n+1)^2}{8}}, \quad  \theta_3(\tau)\coloneqq \sum_{n \in \Z}q^{\frac{n^2}{2}},\hspace*{10pt}.
\end{align}
Then, \begin{align}
    \theta_{2}^4(4\tau)=\theta_{\frac{1}{2}(x_1^2+x_2^2+x_3^2+x_4^2),(\vec{1},\vec{2})}, \quad \text{and}\quad  \theta_3(2\tau)^4= \theta_{x_1^2+x_2^2+x_3^2+x_4^2,(\vec{0},\vec{1})}, 
\end{align}
where $\vec{a}=(a,a,a,a)\in \Z^4, a\in \Z$. 

\end{example}

Such congruent theta series were studied in \cite{sun_universalsumspolygonalnumbers, beli2022universalintegralquadraticforms}. 
The congruent theta series, similar to conventional theta series, satisfies modularity properties of certain level while the weight depends on the number of variables (\cite[Chapter IX]{book:Schoeneberg}).  In \cite{CHO2018999}, Cho has shown in some cases coefficients of congruent theta series can be expressed to simple divisor sum functions.

Decomposing the congruent theta series into Eisenstein series and a cusp form reveals  properties about the coefficients of the congruent theta series. In this paper, we will deal with the congruent quadratic form $(Q(\vec{x}), (a_i,S_i))$ where $Q(\Vec{x})=x_1^2+x_2^2+x_3^2+x_4^2$, so we drop the $Q$ in our notations. Also, we will only deal with the column vectors $\Vec{a}=s\cdot\Vec{1}$ and $\Vec{S}=M\cdot\Vec{1}$, so we will use the notation $\theta_{s,M}$. So, 
\begin{equation}
	\theta_{s,M}(\tau)=\sum_{\substack{\Vec{x}\in \Z^4 \\ x_i\equiv s\pmod{M}}}q^{x_1^2+x_2^2+x_3^2+x_4^2}=\sum_{n =1}^{\infty}r_{s,M}(n)q^n, 
\end{equation}
where 
\begin{equation}
	r_{s,M}(n)= \text{ number of elements in  } \{\Vec{x}\in \mathbb{Z}^4 \mid x_1^2+x_2^2+x_3^2+x_4^2=n, \, x_i\equiv s\pmod{M} \}.
\end{equation}

The congruent theta series $\theta_{s,M}$, which is a modular form of weight 2, decomposes into Eisenstein 
and cusp form parts. \cite[Section 2.1]{sun_universalsumspolygonalnumbers}.

We denote the space of modular forms and the Eisenstein series of weight $k$ on $\G$ 
by $\mathcal{M}_k(\G)$ and $\mathcal{E}_k(\G)$ respectively.

Recently, Bringmann and Kane \cite{MR4519810}  demonstrated a technique of applying two operators denoted by $S$ (\hspace{-3pt} Sieving operator) and $V$ to $\widehat{E}_2(\tau)=1-\frac{3}{\pi\Im(\tau)}-24\sum_{n \geq 1}\sigma_1(n)q^n$ to form holomorphic Eisenstein series with modularity of desired levels, where $\displaystyle{\sigma_1(n)=\sum_{d|n}d}$ is the divisor sum function. 
With that intension, we introduce the two operators \cite[Section 2.2]{MR4519810}  
\begin{definition}
	Suppose that $f$ is a function on $\H$ that
	is represented by a Fourier expansion 
	\begin{equation}
		f(\tau)=\sum_{n\geq 0}c_{f}(n)q^n.
	\end{equation}
	We define the \textit{sieving operator} for $M\in \N$ and $m\in \Z$ as
	\begin{equation}
		f|S_{M,m}(\tau)\coloneqq\sum_{\substack{n\geq 0 \\ n\equiv m \hspace{-2pt}\pmod{M}}}c_{f}(n)q^n .
	\end{equation}
	We define the \textit{V operator} for $d \in \Q$ as
	
	\begin{equation}
		f|V_d(\tau)\coloneqq f(d\tau)=\sum_{n\geq 0} c_{f}(n)q^{dn} .
	\end{equation}
	
\end{definition}

These two operators can be used to obtain a basis for weight $2$ Eisenstein spaces of lower levels. Precisely, we show the following theorem.

\begin{theorem}\label{thm:basisgenerationthm}
	
	A basis for the spaces $\mathcal{E}_2(\G_0(p^k))$ can be obtained as follows:
	\begin{enumerate}[label=\textnormal{(\roman*)}]
		\item For any prime $p$ and odd $k$, a basis of $\mathcal{E}_2(\Gamma_0(p^k))$ comprises elements from set
		
		\begin{equation}
			\{f|V_p :  f \in \text{ basis of }( \mathcal{E}_2(\Gamma_0(p^{k-1})))\}.
		\end{equation} 
		\item For $p=2$ and $k\leq 3$, a basis for $\mathcal{E}_2(\Gamma_0(2^{2k}))$  comprises elements from the set 
		\begin{equation}
			\{\widehat{E}_2|S_{2^k,m}(\tau)\mid  m\in (\Z/2^k\Z)^{\times}\}\cup \{f|V_2 :  f \in\text{basis of }(\mathcal{E}_2(\Gamma_0(p^{k-1}))) \}. 
		\end{equation}Applying $S$ and $V$ operators to $\E$ cannot generate a full basis for $\mathcal{E}_2(\G_0(2^k))$ when $k>3$.

		\item For $p=3$, a basis for $\mathcal{E}_2(\Gamma_0(3^2))$ can be given by 
		\begin{equation}\widehat{E}_2|S_{3,1}(\tau),  \quad \widehat{E}_2|S_{3,2}(\tau), \quad \widehat{E}_2|(3V_3-S_{3,0})(\tau).
		\end{equation}
		Applying $S$ and $V$ operators to $\E$ cannot generate a full basis for $\mathcal{E}_2(\G_0(3^{2k}))$ when $k>1$. 
		\item For any other odd prime $p$, it is not possible to obtain a full basis for $ \mathcal{E}_2(\Gamma_0(p^k)), k>1$ using only $S$ and $V$ operators. 
	\end{enumerate}
\end{theorem}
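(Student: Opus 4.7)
The plan is to combine dimension formulas for $\mathcal{E}_2(\Gamma_0(p^k))$ with a careful study of how the operators $S_{M,m}$ and $V_d$ act on the quasi-modular Eisenstein series $\widehat{E}_2$, then argue by linear independence and comparison of dimensions. From the standard cusp count for $\Gamma_0(p^k)$ one obtains $\dim \mathcal{E}_2(\Gamma_0(p^{2m+1})) = 2p^m - 1$ and $\dim \mathcal{E}_2(\Gamma_0(p^{2m})) = p^{m-1}(p+1) - 1$, providing the target cardinalities in each case.

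For the constructive parts (i)--(iii), I would verify in turn that the proposed elements (a) lie in $\mathcal{E}_2(\Gamma_0(p^k))$, (b) are linearly independent, and (c) have cardinality equal to the dimension. For step (a), the operator $V_d$ sends $\mathcal{M}_2(\Gamma_0(N))$ into $\mathcal{M}_2(\Gamma_0(Nd))$, while $S_{M,m}$ admits the character-sum representation $f|S_{M,m}(\tau) = \tfrac{1}{M}\sum_{a=0}^{M-1} e^{-2\pi i am/M} f(\tau + a/M)$, which raises the level by $M^2$. A direct calculation shows that the non-holomorphic term $-\tfrac{3}{\pi\Im(\tau)}$ in $\widehat{E}_2$ survives only in the $m=0$ component of the sieving, so $\widehat{E}_2|S_{M,m}$ is automatically holomorphic when $\gcd(m,M)=1$; moreover, the specific combination $3V_3 - S_{3,0}$ in part (iii) is engineered so that the non-holomorphic contributions of $\widehat{E}_2|V_3$ and $\widehat{E}_2|S_{3,0}$ cancel. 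Step (b) follows from $q$-expansion inspection: the $\widehat{E}_2|S_{M,m}$ with distinct $m$ are Fourier-supported on disjoint residue classes, and $V_p$-images are supported on multiples of $p$, so any dependence relation decouples across supports and reduces to the inductive independence of the previous basis. Part (i) then proceeds by induction on $k$, with the base cases handled by the classical construction $\widehat{E}_2 - p\,\widehat{E}_2|V_p \in \mathcal{E}_2(\Gamma_0(p))$.

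The hardest portion is the negative half of the theorem: for levels $2^{2k}$ with $k > 3$, for levels $3^{2k}$ with $k > 1$, and for $\Gamma_0(p^k)$ with odd $p \geq 5$ and $k > 1$, $S$- and $V$-operators applied to $\widehat{E}_2$ cannot produce a full basis. My approach is structural. Let $W(\Gamma_0(p^k)) \subset \mathcal{E}_2(\Gamma_0(p^k))$ denote the subspace spanned by all holomorphic $S$- and $V$-combinations of $\widehat{E}_2$. Since $\widehat{E}_2$ corresponds to the pair of trivial Dirichlet characters, and since $S_{p^j, m}$ expands by Fourier orthogonality as a linear combination of twists $\widehat{E}_2 \otimes \chi$ with $\chi$ a Dirichlet character of conductor dividing $p^j$, the subspace $W(\Gamma_0(p^k))$ is spanned by $V_d$-images of such twists with conductor dividing $p^{\lfloor k/2 \rfloor}$. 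Using the classical description of $\mathcal{E}_2(\Gamma_0(N))$ in terms of Eisenstein series attached to pairs of Dirichlet characters $(\psi, \psi^{-1})$ with $\psi \psi^{-1}$ trivial modulo $N$, any Eisenstein series attached to a pair whose conductor exceeds $p^{\lfloor k/2 \rfloor}$ lies outside $W(\Gamma_0(p^k))$. A character count then gives the strict inequality $\dim W(\Gamma_0(p^k)) < \dim \mathcal{E}_2(\Gamma_0(p^k))$ in precisely the stated ranges, with an explicit Eisenstein series attached to an inaccessible character providing the witness.

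The main obstacle I anticipate is making the character-theoretic dimension count precise: one must establish that each twist $\widehat{E}_2 \otimes \chi$ reduces --- up to controlled $V$-operator corrections --- to a known basis element $E_{2,\chi,\chi^{-1}}$, and that sieving at level $M = p^j$ imposes a strict conductor bound on the accessible characters. The threshold cases $p=2$ with $k=3$ versus $k=4$, and $p=3$ with $k=1$ versus $k=2$ (in the exponent notation of (ii) and (iii)), will require the most delicate bookkeeping, since near these thresholds the accessible and total dimensions nearly coincide.
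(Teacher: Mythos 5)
Your constructive steps for (i)--(iii) are sound and essentially match the paper's strategy: the dimension formula from the cusp count, holomorphy of $\widehat{E}_2|S_{M,m}$ for $m\not\equiv 0$ and of $\widehat{E}_2|(dV_d-S_{d,0})$, and a comparison of the size of the spanning set against the dimension. The genuine gap is in the negative half, and it is not merely the bookkeeping you anticipate: the obstruction you propose is vacuous. In weight $2$ with trivial nebentypus, every standard basis element of $\mathcal{E}_2(\Gamma_0(p^k))$ is a $V_d$-image of an Eisenstein series attached to a pair $(\psi,\bar\psi)$ with $\operatorname{cond}(\psi)^2\cdot d\mid p^k$, so \emph{every} relevant character already has conductor dividing $p^{\lfloor k/2\rfloor}$; there is no series ``attached to a pair whose conductor exceeds $p^{\lfloor k/2\rfloor}$'' inside $\mathcal{E}_2(\Gamma_0(p^k))$ to serve as your witness. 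The true obstruction is a nebentypus constraint, not a conductor constraint. Expanding $S_{p^j,m}$ by orthogonality of characters produces the twists $\widehat{E}_2\otimes\chi$, whose $n$-th coefficient is $\chi(n)\sigma_1(n)=\sum_{d\mid n}\chi(d)\chi(n/d)\,d$; this is the Eisenstein series attached to the pair $(\chi,\chi)$, which has nebentypus $\chi^2$. Only quadratic $\chi$ therefore contribute to the trivial-nebentypus space $\mathcal{E}_2(\Gamma_0(p^k))$, while the series attached to $(\psi,\bar\psi)$ with $\psi$ non-quadratic --- whose coefficients $\sum_{d\mid n}\psi(n/d)\bar\psi(d)\,d$ are not of the form $\chi(n)\sigma_1(n)$ --- remain inaccessible. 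Counting quadratic characters ($4$ modulo $2^n$ for $n\geq 3$, namely those factoring through $(\Z/8\Z)^{\times}$, versus $2$ modulo $p^n$ for odd $p$) against the dimensions $3\cdot 2^{k-1}-1$ and $p$ is what produces exactly the thresholds asserted in (ii)--(iv).

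The paper encodes this same obstruction via a transformation-law argument rather than character theory: \zcref{prop:essasquarem} shows that $\widehat{E}_2|S_{M,m}|\gamma=\widehat{E}_2|S_{M,a^2m}$ for $\gamma\in\Gamma_0(M^2)$ with upper-left entry $a$, so by \zcref{prop:invcond} a combination $\sum_m\alpha_m\widehat{E}_2|S_{M,m}$ descends from $\Gamma_1(M^2)$ to $\Gamma_0(M^2)$ only if $\alpha_{a^2m}=\alpha_m$ for all units $a$, i.e.\ only if $m\mapsto\alpha_m$ is constant on cosets of the subgroup of squares --- which is precisely the condition of being a combination of quadratic characters. The quotient by the squares is $(\Z/8\Z)^{\times}$ for $p=2$, $n\geq 3$ (\zcref{thm:e2basisthm}) and has order $2$ for odd $p$ (\zcref{rem:oddprimep}), and comparing these counts with \zcref{eq:exactdimension} finishes the negative cases. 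If you replace your conductor bound by this squaring-orbit condition (equivalently, $\chi^2=1$), your character-theoretic framing becomes a correct and arguably cleaner reformulation of the paper's proof; as written, the key step would fail.
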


The advantage of using these bases is that they provide the Fourier coefficients of the basis elements along with exact values at the cusps for each Eisenstein element \cite[Section 3.2]{MR4519810}. \zcref{app:evenbasis} and \zcref{app:oddbasis} give a complete list of basis elements using \zcref{thm:basisgenerationthm}. 
The basis of Eisenstein series provides a powerful framework for tackling various mathematical problems, such as the study of integer partitions  (see \cite{ono, Rollen}), quadratic forms, and many other areas. In this paper, as an application of \zcref{thm:basisgenerationthm}, we can extract Eisenstein series component of a given congruent theta series. If we look at the congruent quadratic form $Q_{1,3}$, 
then from the definition, $Q_{1,3}$ 
can only represent a natural number $n$ if $n \equiv 1\pmod{3}$.  For any natural number $n$, let
\begin{equation}
	r_{1,3}(n)= \text{ number of elements in  } \{\Vec{x}\in \mathbb{Z}^4 \mid x_1^2+x_2^2+x_3^2+x_4^2=n, \, x_i\equiv 1\pmod{3} \}.
\end{equation}  

\begin{remark}
	Notice that $r_{1,3}(n)$ is the number of elements of $\{\Vec{x}\in \Z^4 \mid x_1^2+x_2^2+x_3^2+x_4^2=n, x_i\equiv 1\pmod{3}\}$, including all permutations of the solution tuple $(x_1,x_2,x_3,x_4)$. When $n$ is odd, the tuples $(x_1, x_2,x_2,x_4)$ can only have parities such that $x_1+x_2+x_3+x_4$ is odd. Accounting for permutations, each such solution tuple $(x_1,x_2,x_3,x_4)$ contributes $4$ to $r_{1,3}(n)$. As a result, we get the following proposition.

\end{remark}
\begin{proposition}\label{prop:r13isdivby4}
	For any odd $n\in \N$,  $r_{1,3}(n)$ will be divisible by $4$.
\end{proposition}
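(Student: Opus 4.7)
My plan is to partition the solution set into $S_4$-orbits under the action that permutes coordinates, and show each orbit has size divisible by $4$. As a first step, I exploit the parity of $n$: since $x_i^2 \equiv x_i \pmod{2}$, the equation $x_1^2+x_2^2+x_3^2+x_4^2 = n$ with $n$ odd forces $x_1+x_2+x_3+x_4$ to be odd, so the number of odd coordinates in any solution tuple $(x_1,x_2,x_3,x_4)$ must itself be odd, i.e.\ equal to $1$ or $3$. Both parity patterns are compatible with $x_i \equiv 1 \pmod 3$, since the residue class $1 \pmod 3$ contains integers of both parities (for instance $1$ and $4$).

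Next, the symmetric group $S_4$ acts on the set of solutions by permuting coordinates, and this action preserves both the defining equation and the congruence condition $x_i \equiv 1 \pmod{3}$ (since the congruence is imposed uniformly on every coordinate). Given any solution, there is a unique \emph{distinguished} coordinate, namely the unique odd entry in the case of one odd coordinate, or the unique even entry in the case of three odd coordinates. Any permutation stabilizing the tuple in $S_4$ must fix the distinguished position, so the stabilizer is contained in the copy of $S_3$ acting on the remaining three entries. By the orbit--stabilizer theorem, each orbit has size $24/|\operatorname{Stab}|$; since $|\operatorname{Stab}|$ divides $6$, this orbit size is a multiple of $24/6 = 4$.

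Finally, writing $r_{1,3}(n)$ as a sum of orbit sizes gives $4 \mid r_{1,3}(n)$, completing the argument. I do not foresee a genuine obstacle: the entire content of the proof lies in the mod-$2$ computation that isolates a distinguished coordinate, after which the orbit--stabilizer count is automatic. The only minor point worth verifying explicitly is that one cannot have $0$, $2$, or $4$ odd coordinates under the hypothesis that $n$ is odd, which is immediate from the parity equation above.
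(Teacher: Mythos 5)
Your proof is correct and follows essentially the same route as the paper: the remark preceding the proposition sketches exactly this argument (odd $n$ forces an odd number of odd coordinates, hence a distinguished coordinate, and permutations then contribute in multiples of $4$). Your orbit--stabilizer formulation simply makes the paper's phrase ``accounting for permutations, each such solution tuple contributes $4$'' precise, by noting the stabilizer lies in an $S_3$ so every orbit has size $24/|\operatorname{Stab}|$, a multiple of $4$.
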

In order to learn more about $r_{1,3}(n)$, we need to use  modularity of the generating function $\theta_{1,3}(\tau)$. Decomposing the congruent theta series and analyzing the cusp form part of $\eta(6\tau)^4$ and the elliptic curve associated with the cusp form leads us to a relation between them.

\begin{theorem}\label{thm:r13ecpc}
	Let $E$ be the elliptic curve $y^2=x^3+1$ defined over $\Q$ and let $N_p(k)$ denote the number of  $\mathbb{F}_{p^k}$ rational points on $E$ (including the point at infinity). Then, for primes $p\equiv 1\pmod{6}$,
	\begin{equation}
		r_{1,3}(p)=\frac{1}{3}N_p(1).
	\end{equation}
	In general, we have the following three-term relation 
	
	\begin{equation}\label{eq:equationforrk}
		3r_{1,3}(p^k)= 
		\begin{cases}
			N_p(k)+3p\cdot r_{1,3}(p^{k-2}) \hspace{15pt} & \textnormal{ for } k \geq 3 \\
			N_p(k)                                        & \textnormal{ for } k=1,2.
		\end{cases}
	\end{equation}
	Expanding this gives the following relation,
	\begin{equation}\label{eq:rkodd}
		r_{1,3}(p^{2k+1})=\frac{1}{3}[N_p(2k+1)+pN_p(2k-1)+ \cdots +p^kN_p(1)]\hspace*{10pt} \text{ for }p\equiv 1\pmod{6},
	\end{equation}
	and
	\begin{equation}\label{eq:rkeven}
		r_{1,3}(p^{2k+2})=\frac{1}{3}[N_p(2k+2)+pN_p(2k)+ \cdots +p^kN_p(2)]\hspace*{10pt} \text{ for prime }p>3.
	\end{equation}

\end{theorem}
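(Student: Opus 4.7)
The plan is to exploit modularity: decompose $\theta_{1,3}$ into Eisenstein and cuspidal parts, identify the cusp form with the newform attached to $E$, and translate the Hecke recursion of that newform together with the Frobenius factorization of $N_p(k)$ into the asserted three-term relation.

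Since $\theta_{1,3}(\tau)$ is a weight-$2$ modular form on $\Gamma_0(36)$ by the general theory of theta series of quadratic forms with congruence conditions, I would first write
\begin{equation}
\theta_{1,3}(\tau) \;=\; E_{1,3}(\tau) + S_{1,3}(\tau), \qquad E_{1,3} \in \mathcal{E}_2(\Gamma_0(36)),\ S_{1,3} \in S_2(\Gamma_0(36)).
\end{equation}
\zcref{thm:basisgenerationthm} (applied successively at $p=2$ and $p=3$) provides an explicit basis of $\mathcal{E}_2(\Gamma_0(36))$ with closed-form Fourier expansions obtained by applying $S_{M,m}$ and $V_d$ to $\widehat{E}_2$, so $E_{1,3}$ can be pinned down by matching finitely many low coefficients. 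Next, because $S_2(\Gamma_0(36))$ is one-dimensional, spanned by the normalized newform $\eta(6\tau)^4 = \sum_n a_n q^n$, comparing a single coefficient of $\theta_{1,3} - E_{1,3}$ forces $S_{1,3} = -\tfrac{1}{3}\,\eta(6\tau)^4$. The form $\eta(6\tau)^4$ is the newform classically attached by Eichler--Shimura to the elliptic curve $E : y^2 = x^3 + 1$ of conductor $36$, so for every prime $p \nmid 6$ one has $a_p = p + 1 - N_p(1)$ and more generally $N_p(k) = 1 + p^k - (\alpha^k + \beta^k)$, where $\alpha, \beta$ are the Frobenius eigenvalues satisfying $\alpha + \beta = a_p$ and $\alpha\beta = p$.

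From the explicit basis I would read off that, for every prime $p \equiv 1 \pmod 6$ and every $k \geq 0$, the $p^k$-coefficient of $E_{1,3}$ equals $\tfrac{1}{3}\sigma_1(p^k)$. Combined with the preceding identification this gives
\begin{equation}
3\, r_{1,3}(p^k) \;=\; \sigma_1(p^k) - a_{p^k},
\end{equation}
which at $k=1$ is exactly $1 + p - a_p = N_p(1)$. The three-term relation \eqref{eq:equationforrk} then follows by subtraction: $3\, r_{1,3}(p^k) - 3p\, r_{1,3}(p^{k-2}) = \bigl(\sigma_1(p^k) - p\, \sigma_1(p^{k-2})\bigr) - \bigl(a_{p^k} - p\, a_{p^{k-2}}\bigr)$. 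The first bracket telescopes to $1 + p^k$, and the second equals $\alpha^k + \beta^k$ by a short calculation using the Hecke recursion $a_{p^{k+1}} = a_p\, a_{p^k} - p\, a_{p^{k-1}}$ together with the closed form $a_{p^k} = (\alpha^{k+1} - \beta^{k+1})/(\alpha - \beta)$; the difference is therefore $N_p(k)$. Equations \eqref{eq:rkodd} and \eqref{eq:rkeven} then drop out by iterating \eqref{eq:equationforrk} from the base cases $r_{1,3}(p) = N_p(1)/3$ and $r_{1,3}(p^2) = N_p(2)/3$. For the weaker hypothesis $p > 3$ in \eqref{eq:rkeven}, I would check separately that when $p \equiv 5 \pmod 6$ the curve is supersingular ($a_p = 0$), so only the even-indexed Eisenstein coefficients of $E_{1,3}$ survive and still equal $\tfrac{1}{3}\sigma_1(p^{2m})$, which is enough to recover the even-power recursion.

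The main obstacle is the explicit identification of the Eisenstein coefficients at prime powers. While \zcref{thm:basisgenerationthm} guarantees a computable basis, the clean closed form $e_{p^k} = \tfrac{1}{3}\sigma_1(p^k)$ for $p \equiv 1 \pmod 6$ is not visible a priori; it must be extracted by unwinding the precise linear combination of basis elements of $\mathcal{E}_2(\Gamma_0(36))$ that represents $E_{1,3}$, and the resulting multiplicative structure in $n$ has to be verified at odd and even exponents separately to cover both the ordinary and supersingular cases. Once that accounting is in hand, everything downstream is formal manipulation of the Hecke and Frobenius recursions.
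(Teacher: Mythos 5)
Your proposal is correct and follows essentially the same route as the paper: decompose $\theta_{1,3}$ into Eisenstein and cuspidal parts, identify the cusp form as $-\tfrac{1}{3}\eta(6\tau)^4$ attached to $E$ so that $3r_{1,3}(p^k)=\sigma_1(p^k)-a_{p^k}$ (the paper's \zcref{prop:propforECPC}), and then combine the Frobenius factorization $N_p(k)=p^k+1-\alpha^k-\beta^k$ with the closed form $a_{p^k}=\sum_{i=0}^k\alpha^i\beta^{k-i}$ and the identity $\sigma_1(p^k)-p\,\sigma_1(p^{k-2})=p^k+1$. The only cosmetic difference is that you derive the three-term relation by direct subtraction where the paper phrases it as an induction on $k$; the underlying computation is identical.
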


\begin{example}
	For $p=103$, $r_{1,3}(103)=28$, and the Fourier coefficient $a_{103}$ of $q^{103}$ in the Fourier expansion of $\eta(6\tau)^4 $ is  20. We can easily verify that 
	\begin{equation}
		3r_{1,3}(p)= p+1-a_p=\#\{(x,y)\in \F_p\times\F_p\mid \, y^2=x^3+1 \pmod{p} \}.
	\end{equation}

\end{example}

We give a generalization of \zcref{thm:r13ecpc}. We can extend the formula for $r_{1,3}(n)$, where $n\equiv 1\pmod{6}$. In terms, this relation shows a more general relationship between congruent quadratic forms and point counting on elliptic curves.

\begin{theorem}\label{thm:r13general}
	Consider any set of distinct primes $p_1,\dots,p_k$ and natural numbers $\a_1,\dots,\a_k$ such that $n=\prod_{i}p_i^{\a_i}\equiv 1\pmod{6}$ with the conditions 
	\begin{equation}
		\a_i\in \begin{cases}
		2\mathbb{N}, & \textnormal{ if } p_i\equiv \textnormal{5\pmod6}, \\
		\mathbb{N},  & \textnormal{ if } p_i\equiv \textnormal{1\pmod6}.
	\end{cases}
	\end{equation}
	Then we have the following decomposition of $r(n)$:
	\begin{equation}
		r_{1,3}(n)=\sum_{i=1}^{k}\left(\prod_{j=i+1}^{k}\left(\sigma_1(p_j^{\a_j})\right)\prod_{j=1}^{i-1}\left(a_{p_j^{\a_j}}\right)\right)r_{1,3}(p_i^{\a_i}),
	\end{equation}
	where $a_\ell$ is the $\ell$-th Fourier coefficient of the cusp form $\eta(6\tau)^4$.

\end{theorem}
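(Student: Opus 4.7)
The plan is to condense \zcref{thm:r13general} into the single multiplicative identity
\begin{equation*}
3r_{1,3}(n) = \sigma_1(n) - a_n,
\end{equation*}
valid for every $n$ in the theorem's scope, and then unfurl it via an elementary telescoping expansion of a difference of products.

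First I would sharpen \zcref{thm:r13ecpc} into the prime-power form of this identity, namely $3r_{1,3}(p^\alpha) = \sigma_1(p^\alpha) - a_{p^\alpha}$ for every prime $p \geq 5$ and every admissible exponent $\alpha$. Writing $N_p(\alpha) = p^\alpha + 1 - (\alpha_p^\alpha + \beta_p^\alpha)$ and using the elementary identity
\begin{equation*}
\alpha_p^\alpha + \beta_p^\alpha = a_{p^\alpha} - p\,a_{p^{\alpha-2}},
\end{equation*}
which follows from $\alpha_p\beta_p = p$ together with the Hecke recursion for $\eta(6\tau)^4$, an induction on $\alpha$ from the three-term recurrence of \zcref{thm:r13ecpc}, combined with $\sigma_1(p^\alpha) = p^\alpha + 1 + p\,\sigma_1(p^{\alpha-2})$, closes the computation. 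The parity constraints on $\alpha$ are exactly what is needed for $r_{1,3}(p^\alpha)$ not to be forced to vanish for trivial residue reasons.

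Next, I would extend the identity to composite $n = \prod_i p_i^{\alpha_i}$ satisfying the hypotheses. The decomposition $\theta_{1,3}(\tau) = E(\tau) - \tfrac{1}{3}\eta(6\tau)^4$ with $E \in \mathcal{E}_2(\Gamma_0(36))$ gives $r_{1,3}(n) = e(n) - \tfrac{1}{3}a_n$. Since $\eta(6\tau)^4$ is a Hecke newform, $a_n = \prod_i a_{p_i^{\alpha_i}}$. Writing $E$ in the explicit Eisenstein basis furnished by \zcref{thm:basisgenerationthm} and reading off the Fourier coefficient at $n$ with $\gcd(n,6)=1$ suppresses the contributions of every basis element obtained via $V_d$ with $d$ a multiple of $2$ or $3$, as well as every sieve modulo a divisor of $6$; what survives assembles into a multiplicative function whose prime-power values are pinned by Step~1 to $\tfrac{1}{3}\sigma_1(p^\alpha)$. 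Hence $e(n) = \tfrac{1}{3}\sigma_1(n)$ on the full scope, yielding $3r_{1,3}(n) = \sigma_1(n) - a_n$.

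Finally, apply the telescoping identity
\begin{equation*}
\prod_{i=1}^{k} x_i - \prod_{i=1}^{k} y_i = \sum_{i=1}^{k}(x_i-y_i)\left(\prod_{j=1}^{i-1} y_j\right)\left(\prod_{j=i+1}^{k} x_j\right)
\end{equation*}
with $x_i = \sigma_1(p_i^{\alpha_i})$ and $y_i = a_{p_i^{\alpha_i}}$, and substitute $x_i - y_i = 3r_{1,3}(p_i^{\alpha_i})$ from Step~1; dividing by $3$ then reproduces the theorem's sum formula verbatim. The main obstacle is Step~2: justifying that the linear combination of Hecke Eisenstein eigenforms comprising $E$ collapses to a single multiplicative sequence on integers coprime to $6$. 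A generic sum of eigenforms is not multiplicative, and one must exploit the explicit $V$-operator/sieving shape of the Eisenstein basis to see which basis elements are supported on $\{n : \gcd(n,6)=1\}$, and then use the prime-power match from Step~1 to fix the surviving coefficient to $\tfrac{1}{3}\sigma_1$.
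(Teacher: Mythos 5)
Your proposal is correct and follows essentially the same route as the paper: both rest on the identity $3r_{1,3}(n)=\sigma_1(n)-a_n$ for $n\equiv 1\pmod 6$ together with the multiplicativity of $\sigma_1$ and of the Hecke eigenform coefficients $a_n$, and your closed-form telescoping identity is exactly the paper's induction on the number of distinct prime factors, unrolled. The obstacle you flag in Step~2 is already dispatched by \zcref{prop:propforECPC}: for $n\equiv 1\pmod 6$ only the $\widehat{E}_2|S_{6,1}$ sieve term of the Eisenstein component is supported at $q^n$, so its $n$-th coefficient is $\tfrac{1}{3}\sigma_1(n)$ outright and no separate multiplicativity argument for the Eisenstein part is needed.
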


\begin{remark}
	From \zcref{prop:r13isdivby4} and \zcref{thm:r13ecpc}, we get that $N_p(1)$ is divisible by $12$ whenever $p\equiv 1 \pmod{6}$. Also, as a direct consequence of \zcref{thm:r13ecpc}, it is evident that
	\begin{equation}
		3r_{1,3}(p^m)\equiv N_p(m) \pmod{p}
	\end{equation}for any $m \in \N$.
\end{remark}  
\begin{remark}
	In \cite[\S 3]{CHO2018999}, some examples can also give rise to similar results listed in \zcref{tab:lineartable}.
	\begin{table}[!ht]
		\centering
		\scalebox{0.829}{
			\begin{tabular}{c c c c c c}
				\toprule
				\makecell{Quadratic \\ form $Q$} & \makecell{Congruence \\ conditions} & \makecell{Elliptic \\ curve $E$\\} & {CM} & \makecell{Relation \\ between \\ $N_p^E(1)$ \\ and $r_Q(p)$} & \makecell{ For primes \\$p$  with \\congruence \\conditions}\\ \midrule
				\makecell{$x_1^2+x_2^2+2x_3^2+2x_4^2$} & \makecell{ $x_1, x_3, x_4 \equiv 1\pmod{2}$,\\$x_2\equiv 0\pmod{2}$} & $y^2=x^3+4x$ & $\Q(\sqrt{-1})$ & $r_Q(p)=N_p^E(1)$ & $p \equiv 1\pmod{4}$\\[15pt]
				\makecell{$x_1^2+x_1x_2+x_2^2$\\ $+x_3^2+x_3x_4+x_4^2$} & \makecell{ $x_1,x_2, x_3 \equiv 1\pmod{3}$,\\$x_4\equiv 0\pmod{3}$} & $y^2+y=x^3$ & $\Q(\sqrt{-3})$ & $r_Q(p)=N_p^E(1)$ & $p \equiv 1\pmod{3}$\\[15pt]
				\makecell{$x_1^2+x_1x_2+2x_2^2$\\$+x_3^2+x_3x_4+2x_4^2$} & \makecell{ $x_1, x_4 \equiv 1\pmod{2}$,\\$x_2,x_3 \equiv 0\pmod{2}$} & \parbox{1.8 cm}{$y^2+xy+y=x^3-x$ } & None & $r_Q(p)=2N_p^E(1)$ & \makecell{$p \equiv 1\pmod{2},$\\$ p \not \equiv 7\pmod{14}$}\\ \bottomrule
			\end{tabular}}
		\caption{Relations similar to \zcref{thm:r13ecpc}, where $r_Q(p)$ is the number of representations of a prime $p$ by the congruent quadratic form $(Q,(a_i,S_i))$ and $N_p^E(1)$ is $|\{(x,y)\in \F_p\mid (x,y) \text{ is on }E\}|+1$ (including the point at infinity) for the elliptic curve $E$.}
		\label{tab:lineartable}
	\end{table}
	
    The basis of the Eisenstein series generated by elements of the form $\widehat{E}_2|S_{M,m}, \widehat{E}_2|S_{M,m}|V_d$ serves as a crucial tool for extracting the Eisenstein component of the associated theta series in all the cases.

\end{remark}

This paper is organized as follows. In \zcref{sec:background}, we recall the properties of modular forms, congruent theta series and its modularity, and explore some properties of the $S$ and $V$ operators. \zcref{sec:basissection} describes the method to obtain a basis of the Eisenstein series space using these operators. In \zcref{sec:theta2,sec:theta4,sec:theta3,sec:theta6}, we have analyzed the coefficients of all the possible cases of congruent theta series $\theta_{k,M}$ for $M=2,4,3,6$ respectively, for all $k$. Finally, \zcref{sec:sectionforECPC} gives a detailed proof of \zcref{thm:r13ecpc,thm:r13general} and some further consequences of our results.

\backmatter





\bmhead{Acknowledgments}

The author extends heartfelt gratitude to Dr.~Fang-Ting Tu and Dr.~Gene S. Kopp for their unwavering support and guidance throughout the course of this project. Special thanks are also due to Dr.~Jingbo Liu for consistently providing supporting materials as needed. Additionally, the author wishes to acknowledge Dr.~Ling Long, Brian Grove, Dr.~Michael Allen, Dr.~Hasan Saad, Esme Rosen, Paresh Singh Arora, and Murtadha Aljanabi for their valuable feedback and insightful contributions.

The author is partially supported by NSF grants DMS-2302514 and DMS-2302531 and the 2024 summer research assistantship awarded by the Department of Mathematics at Louisiana State University.

\section{Background}\label{sec:background}
In this section, we recall the necessary background for modular forms and congruent theta series. Additionally, we define and mention some necessary properties of the $S$ and $V$ operators.

\subsection{Modular forms}
We will adopt the notations and details from Chapter 3 of \cite{book:koblitz}.
A modular form of weight $k\in \mathbb{Z}$ for a finite index subgroup  $\Gamma\subset \SL_2(\Z) $ is a function
$f : \H \rightarrow \mathbb{C}$ satisfying the following three conditions.  \begin{itemize}
	\item {$f$ is holomorphic on $\H$,}
	\item{$f(\gamma(\tau))=f\left(\frac{a\tau+b}{c\tau+d}\right)=(c\tau+d)^kf(\tau)$ for $\tau\in \H$, where $\gamma=\begin{psmallmatrix}
			a&b\\c&d
		\end{psmallmatrix}\in \Gamma$},
	\item {$f|_k\gamma(\tau)\coloneqq(c\tau+d)^{-k}f(\gamma(\tau) )$ is bounded as $\im(\gamma(\tau) ) \rightarrow \infty, \text{ for all } \gamma\in \SL_2(\Z)$.}
\end{itemize}

We denote the space of modular forms of weight $k$ on $\Gamma$ by $\mathcal{M}_k(\Gamma)$. For modular form $f\in \mathcal{M}_k(\Gamma)$, if $(c\tau+d)^{-k}f(\gamma(\tau) )\rightarrow 0$ as $\im(\gamma(\tau) ) \rightarrow \infty \text{ for all } \gamma\in SL_2(\Z)$, we call $f$ a cusp form. We will use
$\mathcal{S}_k(\Gamma)$ to denote the cuspidal forms subspace of $\mathcal{M}_k(\Gamma)$. 

Next, we recall the  Petersson inner product that will help us to decompose a modular forms into cuspidal parts and non-cuspidal parts to analyze them separately. 
\begin{definition} 
	Let $\Gamma\subset \SL_2 (\Z)$ be a congruence subgroup and, $F$ be a fundamental domain for $\Gamma$. Let $f,g\in \mathcal{M}_k(\Gamma)$ with at least one of $f,g$ is a cusp form. The  \emph{Petersson inner product} of $f$  and $g$ is defined as
	\begin{equation}
		\langle f,g\rangle\coloneqq\frac{1}{[\PSL_2(\Z):\Bar{\Gamma}]}\int_{F}f(z)\overline{g(z)}y^k\frac{dxdy}{y^2},
	\end{equation}
	where
	\begin{equation}
		\Bar{\G}=\begin{cases}
			\G/\{\pm I\}, & \text{if }- I\in \G, \\
			\G,           & \text{ otherwise, }
		\end{cases}
	\end{equation}
	where $I$ is the identity matrix.
\end{definition}

In addition, we denote the Eisenstein space, which is the orthogonal complement of $\mathcal{S}_k(\Gamma)$ with respect to the Petersson inner product in $\mathcal{M}_k(\Gamma)$, by $\mathcal{E}_k(\G)$.   That is, we have 
\begin{equation}\label{eq:modulardecomposition}
	\mathcal{M}_k(\Gamma)=\mathcal{E}_k(\Gamma)\oplus \mathcal{S}_k(\Gamma). 
\end{equation}

Assume that the translation matrix $T=\begin{psmallmatrix}
	1&1\\0&1
\end{psmallmatrix}\in \Gamma$.  If $f\in \mathcal{M}_k(\Gamma)$, then $f(\tau+1)=f(\tau)$ for $\tau\in \H$; that is, $f$ is  translation invariant. Therefore, $f$ admits a Fourier expansion 
taking the form 
\begin{equation}
	f(\tau)=\sum_{n\geq 0}c_{f}(n)q^n, \hspace*{15pt}q=e^{2\pi i \tau}. 
\end{equation}
Additionally, for $f\in \mathcal{S}_k(\Gamma)$, we have $c_{f}(0)=0.$

\begin{notation}
	For a positive integer $N$, we  use the standard notation for the following congruence subgroups:
	\begin{enumerate}
		\item $\Gamma_0(N)\coloneqq\left\{\begin{pmatrix}a&b\\c&d\end{pmatrix}\in \SL_2(\Z) \biggm| c\equiv 0\pmod{N}\right\}$,
		\item $\Gamma_1(N)\coloneqq\left\{\begin{pmatrix}a&b\\c&d\end{pmatrix}\in \Gamma_0(N) \biggm| a\equiv 1\pmod{N}\right\}$. 
	\end{enumerate}
	
\end{notation}
Recall from Section 5.6 in \cite{book:Diamond-Shurman} that, if $f(\tau) \in\mathcal{M}_k(\Gamma_0(N)) $ then $f|V_d(\tau)\in \mathcal{M}_k(\Gamma_0(Nd))$. Also, if $L\mid N$ and $g(\tau)\in \mathcal{M}_k(\Gamma_0(N))$, then $g(\tau)\in \mathcal{M}_k(\Gamma_0(L))$. These forms $f|V_d(\tau), g(\tau)$ are called \emph{oldforms} as they are rising from lower levels.

\subsection{Congruent theta series and quadratic forms}

Next, we discuss a specific kind of modular form, the theta series. This subsection formally defines the congruent theta series and digs deeper into the modularity of it. For more detailed discussions, see Section 2 in \cite{CHO2018999}.  We define the discriminant $D$ of the quadratic form $Q$ with associated matrix $A$ as
\begin{equation}
	D=(-1)^{\frac{n}{2}}\det(A),
\end{equation}
where $n$ is the number of variables in the quadratic form $Q$. 
Define the level $N$ of a quadratic form $Q$ as the smallest natural number $N$ such that $NA^{-1}$ is an even matrix. Later in this section, we will see how the level and the discriminant play a significant role in determining the modularity of the congruent theta series.

\begin{notation}
	From now on, we will denote by $Q_{s,M}({\Vec{x}})\leadsto \theta_{s,M}$ the theta series associated with the quadratic form $Q_{s,M}({\Vec{x}})$ or vice-versa. 
\end{notation} We conclude this subsection with the discussion of modularity of congruent theta series. Recall, the space $\mathcal{M}_k(\Gamma,\chi)$ 
contains modular forms $f$ such that for any $\gamma=\begin{psmallmatrix}
	a&b\\c&d
\end{psmallmatrix}\in \Gamma$, $f$ satisfies
\begin{equation}
	f\!\mid\!_k\gamma (\tau) \coloneqq (c\tau+d)^{-k} f(\gamma \tau)=\chi(d)f(\tau),
\end{equation}
for $\tau \in \H$. 
For the precise modularity of $\theta_{s, M}$, we are going to use Theorems 2.4 and 2.5 from \cite{CHO2018999}. 
\begin{theorem}\label{thm:thetamod}
	Let $Q_{s,M}(\Vec{x})$ is 
	a positive-definite, integral congruent quadratic form of $n$ variables,
	where $n$ is even and $s\in \N$.  
	Let $\theta_{s,M}$ be the associated theta series. 
	Then 
	\begin{equation}
		\theta_{s, M}\in \mathcal{M}_{\frac{n}{2}}\left(\Gamma_0(M^2N)\cap \Gamma_1(M), \left(\frac{D}{.}\right)\right)\cong\bigoplus_{\substack{\chi \pmod{M} \\ \chi(-1)=(-1)^{\frac{n}{2}}}} \mathcal{M}_{\frac{n}{2}}\left(\Gamma_0(M^2N), \chi\cdot\left(\frac{D}{.}\right)\right),
	\end{equation}
	where $D$ and $N$ are the discriminant and level of  $Q_{s,M}({\Vec{x}})$, and $\left(\frac{D}{.}\right)$ is the Kronecker symbol. 
	
\end{theorem}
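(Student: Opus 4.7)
The plan is to reduce the modularity of $\theta_{s,M}$ to the classical Shimura--Schoeneberg modularity of theta series twisted by Dirichlet characters. First, I would use orthogonality of characters modulo $M$ to detect the congruence conditions coordinate by coordinate: for $\gcd(s,M)=1$, the indicator of $x \equiv s \pmod{M}$ equals $\phi(M)^{-1}\sum_{\chi \pmod{M}} \bar\chi(s)\chi(x)$, so that an $n$-fold expansion yields
\begin{equation}
\theta_{s,M}(\tau) = \frac{1}{\phi(M)^n}\sum_{\chi_1,\ldots,\chi_n}\bar\chi_1(s)\cdots\bar\chi_n(s)\,\theta_Q^{\chi_1,\ldots,\chi_n}(\tau),
\end{equation}
where $\theta_Q^{\chi_1,\ldots,\chi_n}(\tau) := \sum_{\vec x\in\Z^n}\chi_1(x_1)\cdots\chi_n(x_n)\,q^{Q(\vec x)}$ is the standard character-twisted theta series. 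When $\gcd(s,M) > 1$, one instead uses additive characters $e^{2\pi i j x/M}$ or restricts to appropriate subcosets; in either case, $\theta_{s,M}$ becomes a finite linear combination of twisted theta series.

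Next I would invoke the classical modularity of such twisted theta series (Chapter IX of \cite{book:Schoeneberg}, or Theorems 2.4--2.5 of \cite{CHO2018999}): each $\theta_Q^{\chi_1,\ldots,\chi_n}$ of admissible parity lies in $\mathcal{M}_{n/2}(\Gamma_0(M^2 N),\,\chi\cdot(D/\cdot))$ with $\chi := \chi_1\cdots\chi_n$, while twisted sums of the wrong parity vanish identically under the symmetry $\vec x\mapsto -\vec x$ combined with the parity of $(D/\cdot)$. Summing the surviving pieces shows that $\theta_{s,M}$ transforms correctly on $\Gamma_0(M^2 N)\cap\Gamma_1(M)$, since imposing $d\equiv 1\pmod{M}$ trivializes every $\chi(d)$ and leaves only the common Kronecker character $(D/\cdot)$ on the intersection.

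For the isomorphism on the right, I would identify $\Gamma_0(M^2 N)/\bigl(\Gamma_0(M^2 N)\cap\Gamma_1(M)\bigr)\cong(\Z/M\Z)^\times$ via $\gamma\mapsto d\bmod M$; this group acts on $\mathcal{M}_{n/2}\bigl(\Gamma_0(M^2N)\cap\Gamma_1(M),(D/\cdot)\bigr)$ by slash operators, and a standard isotypic decomposition produces the claimed direct sum. The parity constraint $\chi(-1)=(-1)^{n/2}$ is then forced by the $-I$ transformation when $-I$ lies in the group, and otherwise by the intrinsic $\vec x\mapsto -\vec x$ symmetry of the underlying theta sums.

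The main obstacle is the careful bookkeeping of the level. One must verify that $M^2 N$ rather than $MN$ is the natural level: the factor of $M^2$ arises because the substitution $\vec x=M\vec u+s\vec 1$ in the quadratic form produces a polynomial in $\vec u$ whose quadratic part has matrix $M^2 A$ and hence level $M^2 N$. Equivalently, each character twist of conductor dividing $M$ contributes a factor of $M^2$ in the transformation formula for the twisted theta series. A related technical subtlety is confirming the parity constraint when $-I\notin\Gamma_0(M^2N)\cap\Gamma_1(M)$ (as happens for $M>2$), since one cannot then appeal to the $-I$-action on modular forms and must instead read the constraint directly off the symmetry of the original theta sum under negation of the summation variable.
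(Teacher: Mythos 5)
The first thing to say is that the paper contains no proof of this statement: it is imported verbatim as Theorems 2.4 and 2.5 of \cite{CHO2018999}, which in turn rest on Schoeneberg's classical transformation theory for theta series with congruence conditions (\cite[Chapter IX]{book:Schoeneberg}). So there is no internal argument to compare against; your sketch is a reconstruction of the standard proof, and in broad outline it has the right shape. In particular, you correctly locate the source of the level $M^2N$ in the coset substitution $\vec{x}=M\vec{u}+s\vec{1}$, the isotypic decomposition of $\mathcal{M}_{n/2}(\Gamma_0(M^2N)\cap\Gamma_1(M),(\frac{D}{\cdot}))$ under the quotient $\Gamma_0(M^2N)/(\Gamma_0(M^2N)\cap\Gamma_1(M))\cong(\Z/M\Z)^{\times}$, and the role of the $\vec{x}\mapsto-\vec{x}$ symmetry in the parity constraint.

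The genuine gap is in the reduction step. Multiplicative orthogonality $\phi(M)^{-1}\sum_{\chi}\bar\chi(s)\chi(x)$ detects only residues coprime to $M$, and the theorem (as used in the paper, e.g.\ for $\theta_{0,3}$, $\theta_{2,4}$, $\theta_{0,6}$, $\theta_{2,6}$, $\theta_{3,6}$) must cover $\gcd(s,M)>1$. Your fallback to additive characters is not a reduction to ``standard character-twisted theta series'': the object $\sum_{\vec{x}}e^{2\pi i\sum_i j_ix_i/M}q^{Q(\vec{x})}$ is a theta series with characteristics, i.e.\ exactly the congruence theta function whose transformation law under $\Gamma_0(M^2N)$ is the content of Schoeneberg's theorem, so the argument becomes circular unless you prove that transformation law (Gauss-sum computation via Poisson summation on the coset $M\Z^n+s\vec{1}$). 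A second, smaller issue: for a non-diagonal positive-definite $Q$ (the theorem is stated for general even $A$), the multi-character twist $\sum_{\vec{x}}\chi_1(x_1)\cdots\chi_n(x_n)q^{Q(\vec{x})}$ does not factor into one-variable thetas and is not covered by the single-character twisting theorems you cite; and your parity bookkeeping should make explicit that $\bigl(\frac{D}{-1}\bigr)=\operatorname{sign}(D)=(-1)^{n/2}$ for positive-definite $A$, since that is what converts the nonvanishing condition on the nebentypus $\chi\cdot(\frac{D}{\cdot})$ into the stated condition on $\chi(-1)$.
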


\begin{remark}
	In our paper, the positive-definite integral congruent quadratic form  is of the form  $Q_{k,M}(\Vec{x}) = x_1^2+x_2^2+x_3^2+x_4^2$, where $x_i\equiv k\pmod{M}$. Hence, the associated matrix of $Q_{k,M}$ is
	\begin{equation}
		A=\begin{pmatrix}
			2 & 0 & 0 & 0 \\
			0 & 2 & 0 & 0 \\
			0 & 0 & 2 & 0 \\
			0 & 0 & 0 & 2
		\end{pmatrix},
	\end{equation}
	which has determinant 16, hence the discriminant is $D = 16$, which implies that $\left(\frac{D}{.}\right)$ is the trivial character, and the level of $A$ is $N=4$. By \zcref{thm:thetamod},
	\begin{equation}
		\theta_{k,M}\in \mathcal{M}_2\left(\Gamma_0(4M^2)\cap \Gamma_1(M)\right)\cong\bigoplus_{\substack{\chi \pmod{M} \\ \chi(-1)=1}}\mathcal{M}_2(\Gamma_0(4M^2), \chi).
	\end{equation}
	
\end{remark}

\subsection{Properties of \texorpdfstring{$S$}{S} and \texorpdfstring{$V$}{V} Operators}

In order to apply the $S$ and $V$ operators, we will need to get a simpler formula for them. Then the following proposition gives us a finite sum formula for the $S$ and $V$ operators. These finite sum formulae will allow us to check holomorphicity of $f|S_{M,m}$ and $f|V_d$ and calculate the values of the Eisenstein elements at different cusps easily. 
\begin{proposition}\label{prop:finsum}
	Let $f$ be a translation invariant function and $m,M,d\in \N$. Then for $\tau \in \H$, 
	\begin{equation}
		f|S_{M,m}(\tau)=\frac{1}{M}\sum_{j=0}^{M-1}f\left(\tau+\frac{j}{M}\right)\zeta_{M}^{-jm}
	\end{equation}
	and 
	\begin{equation}
		f|S_{M,m}|V_d(\tau)=\frac{1}{M}\sum_{j=0}^{M-1}f\left(d\tau+\frac{j}{M}\right)\zeta_{M}^{-jm},
	\end{equation}
	where $\zeta_M = e^{\frac{2\pi i}{M}}$.
\end{proposition}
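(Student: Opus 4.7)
The plan is to derive both formulas by substituting the Fourier expansion of $f$ and exploiting the orthogonality relation for $M$-th roots of unity. Since $f$ is translation invariant, it admits a Fourier expansion $f(\tau) = \sum_{n \geq 0} c_f(n) q^n$ with $q = e^{2\pi i \tau}$. The entire argument reduces to a single discrete Fourier identity.

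First I would compute the right-hand side of the first claimed formula by term-wise evaluation: substituting $\tau + j/M$ into the Fourier expansion yields
\begin{equation}
f\!\left(\tau + \tfrac{j}{M}\right) = \sum_{n \geq 0} c_f(n)\, e^{2\pi i n(\tau + j/M)} = \sum_{n \geq 0} c_f(n)\, q^n \, \zeta_M^{jn}.
\end{equation}
Interchanging the two summations (absolute convergence is immediate on any compact subset of $\H$ since $|q| < 1$), I obtain
\begin{equation}
\frac{1}{M}\sum_{j=0}^{M-1} f\!\left(\tau + \tfrac{j}{M}\right) \zeta_M^{-jm} = \sum_{n \geq 0} c_f(n)\, q^n \cdot \frac{1}{M}\sum_{j=0}^{M-1} \zeta_M^{j(n-m)}.
\end{equation}
The inner sum is the standard orthogonality relation for $M$-th roots of unity: it equals $1$ when $n \equiv m \pmod{M}$ and $0$ otherwise. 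The right-hand side therefore collapses to $\sum_{n \equiv m \,(\mathrm{mod}\,M)} c_f(n)\, q^n$, which is exactly $f|S_{M,m}(\tau)$ by definition.

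For the second identity I would simply apply the $V_d$ operator to the first. By definition $f|S_{M,m}|V_d(\tau) = (f|S_{M,m})(d\tau)$, so substituting $d\tau$ in place of $\tau$ in the formula just proved immediately yields
\begin{equation}
f|S_{M,m}|V_d(\tau) = \frac{1}{M}\sum_{j=0}^{M-1} f\!\left(d\tau + \tfrac{j}{M}\right) \zeta_M^{-jm}.
\end{equation}

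There is no real obstacle here; the only delicate point is justifying the interchange of summations, but this is routine given that $f$ is holomorphic on $\H$ and hence the Fourier series converges absolutely and uniformly on compacta in $\H$. The result is a clean application of finite Fourier analysis, and the finite-sum form is useful precisely because it expresses the sieved/dilated function without reference to the coefficients $c_f(n)$, which is what lets one later check holomorphicity and cusp behavior of $\widehat{E}_2|S_{M,m}$ and $\widehat{E}_2|S_{M,m}|V_d$.
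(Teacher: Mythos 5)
Your proof is correct and follows essentially the same route as the paper: substitute the Fourier expansion into the finite sum, swap the order of summation, and apply the orthogonality relation for $M$-th roots of unity, with the second identity following directly from the definition of $V_d$. The only (harmless) addition is your explicit justification of the interchange of sums, which the paper omits.
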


\begin{proof}
	We start with the Fourier series of $f(\tau)=\sum_{n\geq 0}c_{f,v}(n)q^n$. Then, 
	\begin{equation}
		f\left(\tau+\frac{j}{M}\right)\zeta_M^{-jm}=\sum_{n \geq 0}c_{f}(n)q^n\zeta_M^{ j(n-m)}.
	\end{equation}
	Therefore, taking sum over all residue classes $j $ modulo $M$, we get
	\begin{align}
		\sum_{j=0}^{M-1}\!f\!\left(\tau+\frac{j}{M}\right)\zeta_M^{-jm}&=\sum_{n \geq 0}\sum_{j=0}^{M-1}c_{f}(n)q^n\zeta_M^{ j(n-m)}\\
		&=M\cdot \sum_{n \equiv m\pmod{M}}c_{f}(n)q^n=M \cdot f|S_{M,m}.
	\end{align}
	The second to last equality follows from the fact that
	\begin{equation}
		\sum_{j=0}^{M-1}\zeta_M^{ j(n-m)}=\begin{cases}
			M, & \text{if }n\equiv m\pmod{M}, \\
			0, & \text{otherwise.}
		\end{cases}
	\end{equation}
	
	The second formula comes from the definition $f|S_{M,m}|V_d(\tau)=f|S_{M,m}(d\tau).$
\end{proof}

If $f(\tau)=\widehat{E}_2(\tau) \coloneqq 1-\frac{3}{\pi \im(\tau)}-24\sum_{n \geq 1}\sigma_1(n)q^n$ is the non-holomorphic, weight 2 modular Eisenstein series on $\SL_2(\Z)$, then, using \zcref{prop:finsum}, we can  get the values of $\widehat{E}_2|S_{M,m}$ and $\widehat{E}_2|V_d$ at the cusps. For more details, see section 3 in \cite{MR4519810}. 
\begin{proposition}
	For $\frac{h}{k}\in \Q$, we have
	\begin{equation}\label{eq:ES}
		-\lim_{z\rightarrow 0^+}z^2\widehat{E}_2|S_{M,m}\left(\frac{h}{k}+\frac{iz}{k}\right)=\frac{1}{M^3}\sum_{j=0}^{M-1}\gcd(Mh+jk, Mk)^2\zeta_M^{-jm},
	\end{equation}
	and
	\begin{equation}\label{eq:ESV}
		-\lim_{z\rightarrow 0^+}z^2\widehat{E}_2|S_{M,m}|V_d\left(\frac{h}{k}+\frac{iz}{k}\right)=\frac{1}{M^3d^2}\sum_{j=0}^{M-1}\gcd(Mdh+jk, Mk)^2\zeta_M^{-jm}.
	\end{equation}
	
\end{proposition}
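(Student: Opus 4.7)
The plan is to combine the finite-sum formula from the preceding proposition with the weight-$2$ transformation law $\widehat{E}_2(\gamma\tau) = (c\tau+d)^2 \widehat{E}_2(\tau)$ for $\gamma = \begin{psmallmatrix} a & b \\ c & d \end{psmallmatrix} \in \SL_2(\Z)$ in order to evaluate each term $\widehat{E}_2(\tau + j/M)$ of the finite sum as $\tau \to h/k$. Setting $\tau = \frac{h}{k} + \frac{iz}{k}$, each translate takes the form $\tau + \frac{j}{M} = \frac{Mh+jk}{Mk} + \frac{iz}{k}$. Reduce the rational part to lowest terms with $g_j = \gcd(Mh+jk, Mk)$, $a_j = (Mh+jk)/g_j$, $c_j = Mk/g_j$, and pick integers $b_j, d_j$ with $a_j d_j - b_j c_j = 1$ to form $\gamma_j = \begin{psmallmatrix} a_j & b_j \\ c_j & d_j \end{psmallmatrix} \in \SL_2(\Z)$.

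The key computation is to unwind $\tau_j' := \gamma_j^{-1}(\tau + j/M)$ via a short linear-fractional calculation, yielding $\tau_j' = -\frac{d_j}{c_j} + \frac{ik}{c_j^2 z}$, together with $c_j \tau_j' + d_j = \frac{ik}{c_j z}$ so that $(c_j\tau_j' + d_j)^2 = -\frac{k^2}{c_j^2 z^2}$. Since $\im(\tau_j') \to \infty$ as $z \to 0^+$, both the $q$-series $-24\sum_{n\geq 1}\sigma_1(n) e^{2\pi i n \tau_j'}$ and the non-holomorphic correction $-\frac{3}{\pi\im(\tau_j')}$ vanish, giving $\widehat{E}_2(\tau_j') \to 1$. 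Applying modularity $\widehat{E}_2(\tau + j/M) = (c_j\tau_j' + d_j)^2 \widehat{E}_2(\tau_j')$ and multiplying by $-z^2$ yields
\begin{equation*}
-z^2\,\widehat{E}_2\!\left(\tau + \tfrac{j}{M}\right) = \frac{k^2}{c_j^2}\,\widehat{E}_2(\tau_j') \longrightarrow \frac{k^2}{c_j^2} = \frac{g_j^2}{M^2}.
\end{equation*}

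Substituting this into the finite-sum identity $\widehat{E}_2|S_{M,m}(\tau) = \frac{1}{M}\sum_{j=0}^{M-1}\widehat{E}_2(\tau + \frac{j}{M})\zeta_M^{-jm}$ gives the first claim. For the second, repeat the same argument on $\widehat{E}_2|S_{M,m}|V_d(\tau) = \widehat{E}_2|S_{M,m}(d\tau)$: the translates $d\tau + \frac{j}{M} = \frac{Mdh+jk}{Mk} + \frac{idz}{k}$ now have imaginary part $\frac{dz}{k}$, so running the preceding paragraph with $(dz)^2$ in place of $z^2$ and $Mdh+jk$ in place of $Mh+jk$, and then rescaling $(dz)^2 \mapsto z^2$, contributes the extra factor $\frac{1}{d^2}$. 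The main subtlety is the precise linear-fractional manipulation producing $\tau_j'$, together with the recognition that the non-holomorphic correction to $E_2$, which is exactly what enforces strict $\SL_2(\Z)$-modularity of $\widehat{E}_2$, drops out in the cuspidal limit because $\im(\tau_j') \to \infty$; the choice of completion $(b_j, d_j)$ is immaterial, as the value $\widehat{E}_2(\tau + j/M)$ is intrinsic to $\tau + j/M$.
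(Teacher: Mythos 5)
Your proof is correct and is exactly the computation the paper has in mind: the paper omits a proof and defers to Section~3 of Bringmann--Kane, but the intended route is precisely to combine the finite-sum formula of the preceding proposition with the weight-$2$ modularity of $\widehat{E}_2$, sending each translate $\frac{Mh+jk}{Mk}+\frac{iz}{k}$ to the cusp $i\infty$ via a matrix built from $\gcd(Mh+jk,Mk)$. Your linear-fractional bookkeeping, the limit $\widehat{E}_2(\tau_j')\to 1$, and the $d^{-2}$ rescaling for the $V_d$ case all check out.
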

\begin{remark}
	If we put $M=1 $ and  $m=0$ into \zcref{eq:ESV}, we get
	\begin{equation}\label{eq:EV}
		-\lim_{z\rightarrow 0^+}z^2\widehat{E}_2|V_d\left(\frac{h}{k}+\frac{iz}{k}\right)=\frac{1}{d^2}\gcd(hd,k)^2.
	\end{equation}
\end{remark}

Using $S$ and $V$ operators on non-holomorphic, modular Eisenstein series $\widehat{E}_2$, we can get holomorphic modular forms. 

\begin{lemma}\label{lem:svlevel}
	For given positive integers $M$ and $d$ and $m\in \Z$, we have, for $\tau \in \H$,
	\begin{enumerate}[label=\textnormal{(\roman*)}]
		\item For $m > 0$,
		\begin{equation}
			\widehat{E}_2|S_{M,m}(\tau)\in \mathcal{M}_2(\Gamma_1(M^2)) \quad\text{and}\quad \widehat{E}_2|S_{M,m}|V_d(\tau) \in \mathcal{M}_2(\Gamma_0(\lcm(4,d))\cap \Gamma_1(M^2)).
		\end{equation}
		
		\item If $m=0$, then $\widehat{E}_2|(dV_d-S_{d,0})(\tau)\in \mathcal{M}_2(\Gamma_0(\lcm(4,d))\cap \Gamma_1(d^2))$. 
	\end{enumerate}
\end{lemma}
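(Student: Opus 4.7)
The plan is to verify the three defining conditions of a modular form---holomorphicity on $\H$, the weight-$2$ transformation under the specified congruence subgroup, and boundedness at the cusps---for each operator combination. I will use the finite-sum formulae from \zcref{prop:finsum} together with the $\SL_2(\Z)$-modularity of $\widehat E_2$ and the explicit cusp values recorded in \zcref{eq:ES,eq:ESV,eq:EV}.

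For holomorphicity on $\H$, the only obstruction is the non-holomorphic summand $-3/(\pi\im\tau)$ of $\widehat E_2$. In (i), since $\im(\tau+j/M)=\im\tau$ for every $j$, this term contributes to $\widehat E_2|S_{M,m}(\tau)=\frac{1}{M}\sum_{j=0}^{M-1}\widehat E_2(\tau+j/M)\zeta_M^{-jm}$ with coefficient $\frac{1}{M}\sum_{j=0}^{M-1}\zeta_M^{-jm}$, which vanishes whenever $m\not\equiv 0\pmod M$; what remains is the manifestly holomorphic $q$-series $-24\sum_{n\geq 1,\,n\equiv m\pmod M}\sigma_1(n)q^n$. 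For (ii), I will observe that $\widehat E_2|V_d(\tau)$ has non-holomorphic part $-3/(\pi d\im\tau)$, so $d\cdot \widehat E_2|V_d$ contributes exactly the same $-3/(\pi\im\tau)$ as $\widehat E_2|S_{d,0}$, and the two cancel in the difference, leaving the explicit holomorphic expansion $(d-1)-24\sum_{k\geq 1}(d\sigma_1(k)-\sigma_1(dk))q^{dk}$.

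For the transformation law, given $\gamma=\begin{psmallmatrix}a&b\\c&d\end{psmallmatrix}\in\Gamma_1(M^2)$, I will exhibit, for each $j$, a companion matrix
\begin{equation*}
\gamma_j:=\begin{pmatrix}a+jc/M & b+j(d-a)/M-j^2c/M^2 \\ c & d-jc/M\end{pmatrix}\in\SL_2(\Z)
\end{equation*}
satisfying both $\gamma\tau+j/M=\gamma_j(\tau+j/M)$ and $c_j(\tau+j/M)+d_j=c\tau+d$. The congruences $a\equiv d\equiv 1$ and $c\equiv 0\pmod{M^2}$ are exactly what force every entry of $\gamma_j$ to be integral, and $\det\gamma_j=ad-bc=1$ follows by direct expansion. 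Applying $\widehat E_2(\gamma_j w)=(c_jw+d_j)^2 \widehat E_2(w)$ with $w=\tau+j/M$ and summing over $j$ then produces the weight-$2$ transformation of $\widehat E_2|S_{M,m}$ under $\Gamma_1(M^2)$. Post-composition with $V_d$ will be handled by the standard argument: for $\gamma\in\Gamma_0(d)$, conjugation by $\begin{psmallmatrix}d&0\\0&1\end{psmallmatrix}$ sends $\gamma$ into $\SL_2(\Z)$, so $V_d$ preserves weight-$2$ modularity on $\Gamma_0(d)\cap\Gamma_1(M^2)$, and intersecting with the natural theta-series level $\Gamma_0(4)$ yields the stated $\Gamma_0(\lcm(4,d))\cap\Gamma_1(M^2)$. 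Part (ii) will combine these: the same construction (without needing $m\neq0$, since only the transformation law is used here) gives $\widehat E_2|S_{d,0}$ the weight-$2$ transformation under $\Gamma_1(d^2)$, and $d\widehat E_2|V_d$ has it under $\Gamma_0(d)$; subtracting and intersecting the groups yields $\Gamma_0(\lcm(4,d))\cap\Gamma_1(d^2)$.

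The cusp condition should be immediate from \zcref{eq:ES,eq:ESV,eq:EV}: each of $\widehat E_2|S_{M,m}$, $\widehat E_2|S_{M,m}|V_d$, and $\widehat E_2|V_d$ has an explicit finite limit of $-z^2\cdot(\cdot)$ at every cusp $h/k$, so the same holds for any $\Q$-linear combination, including $\widehat E_2|(dV_d-S_{d,0})$. The main obstacle I anticipate is the matrix identity $\gamma\tau+j/M=\gamma_j(\tau+j/M)$ with integrality of $\gamma_j$: this is the step that upgrades $\widehat E_2$'s modularity from $\SL_2(\Z)$ to the sieved setting, and it is precisely the condition $M^2\mid c$---not $M\mid c$---that renders all entries of $\gamma_j$ integral, explaining why sieving forces the level $\Gamma_1(M^2)$ rather than $\Gamma_1(M)$ or $\Gamma_0(M^2)$.
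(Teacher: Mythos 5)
The paper does not actually prove this lemma---it defers to \zcref{prop:finsum} together with a citation of Bringmann--Kane's Lemma 2.2---so your self-contained argument is necessarily a different route. The parts concerning $S_{M,m}$ alone are correct: the cancellation of the non-holomorphic term $-3/(\pi\im\tau)$ and of the constant term when $M\nmid m$, the conjugation identity $\delta_j\gamma=\gamma_j\delta_j$ with $\gamma_j$ integral for $\gamma\in\Gamma_1(M^2)$ (this is the same computation the paper carries out in \zcref{prop:essasquarem}), and the finiteness of the cusp limits from \zcref{eq:ES,eq:ESV,eq:EV}. Part (ii) also goes through, since $\Gamma_0(\lcm(4,d))\cap\Gamma_1(d^2)\subset\Gamma_1(d^2)$ and both $d\widehat E_2|V_d$ and $\widehat E_2|S_{d,0}$ transform with weight $2$ on $\Gamma_1(d^2)$.

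The gap is in the second claim of (i). Your justification of the $V_d$ step---``$V_d$ preserves weight-$2$ modularity on $\Gamma_0(d)\cap\Gamma_1(M^2)$''---is not a correct general principle. Writing $\alpha=\begin{psmallmatrix}d&0\\0&1\end{psmallmatrix}$, the conjugate $\alpha\gamma\alpha^{-1}=\begin{psmallmatrix}a&bd\\c/d&e\end{psmallmatrix}$ must land in the invariance group of $\widehat E_2|S_{M,m}$, so one needs $M^2\mid c/d$, not merely $M^2\mid c$ and $d\mid c$; the group this argument actually yields is $\Gamma_0(dM^2)\cap\Gamma_1(M^2)$, and $\Gamma_0(\lcm(4,d))\cap\Gamma_1(M^2)$ is not contained in it (take $M=2$, $d=4$: $4\mid c$ does not force $16\mid c$). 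The remark about ``intersecting with the natural theta-series level $\Gamma_0(4)$'' cannot repair this: passing to a smaller group only weakens the conclusion and supplies no transformation law, and no theta series is in play. Indeed, for $M=2$, $m=1$, $d=4$ the asserted membership $\widehat E_2|S_{2,1}|V_4\in\mathcal M_2(\Gamma_0(4)\cap\Gamma_1(4))$ is false outright: a nonzero element of $\mathcal M_2(\Gamma_1(4))$ cannot vanish to order $4$ at $\infty$ by the valence formula, and the paper itself only ever uses this form at level $\Gamma_0(16)$. So the $\Gamma_0$-part of the level must carry an extra factor of $M^2$ (as in $\Gamma_0(\lcm(4,d)M^2)$ or $\Gamma_0(dM^2)$), and the hand-wave in your argument occurs exactly at the point where that factor should be produced.
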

This lemma follows by \zcref{prop:finsum} and \cite[Lemma 2.2]{MR4519810}.

Next, we want to investigate the transformation property of 
the operator $S$ applied to $\widehat{E}_2$ for $\Gamma_0(N)$. As most of the congruent theta series we will encounter will be modular on $\Gamma_0(N)$ for some $N$, therefore we look at the  action of $\gamma\in \Gamma_0(N)$ of the modified $\widehat{E}_2|S_{M,m}$ functions. 
\begin{proposition}\label{prop:essasquarem}
	
	Assume $M\in \N$ and $m \in \Z$.
	Then we have $  \widehat{E}_2|S_{M,m}|\gamma (\tau)=\widehat{E}_2|S_{M,a^2m}(\tau)$, for $\gamma\coloneqq\begin{psmallmatrix}
		a&b\\c&d
	\end{psmallmatrix}\in \Gamma_0({M^2})$.
	Moreover,    $\widehat{E}_2|S_{M,m}(\tau)$ is invariant under  $\Gamma_1(M^2)$. 
\end{proposition}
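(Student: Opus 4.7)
The plan is to reduce to the finite-sum representation from \zcref{prop:finsum} and then leverage the full $\SL_2(\Z)$-modularity of $\widehat{E}_2$ to convert translations-after-$\gamma$ into translations-before-$\gamma$. Concretely, I would begin by writing
\[
(\widehat{E}_2|S_{M,m})|_2\gamma(\tau) \;=\; \frac{1}{M}\sum_{j=0}^{M-1} \zeta_M^{-jm}\,(c\tau+d)^{-2}\widehat{E}_2\!\left(\gamma\tau + \tfrac{j}{M}\right),
\]
and then, for each $j$, seek a matrix identity $T_{j/M}\gamma = \gamma_j\, T_{k/M}$ in $\Gl_2(\Q)$, where $T_t := \begin{psmallmatrix}1 & t\\ 0 & 1\end{psmallmatrix}$, $\gamma_j \in \SL_2(\Z)$, and $k = k(j) \in \Z$ is to be determined.

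Solving this matrix equation entry-by-entry yields $a_j = a+jc/M$, $c_j = c$, $d_j = d - ck/M$, and $b_j = b + (jd - a_j k)/M$. Here the hypothesis $M^2\mid c$ is decisive: it makes $a_j,d_j \in \Z$ automatic and reduces the integrality of $b_j$ to the clean condition $jd \equiv ak\pmod{M}$. Since $ad - bc = 1$ together with $M\mid c$ gives $ad\equiv 1\pmod M$, this uniquely determines $k\equiv jd^2\pmod M$, and $\det\gamma_j = 1$ follows by taking determinants of both sides. A small miracle then occurs: $c_j(\tau + k/M)+d_j = c\tau+d$, so applying the $\SL_2(\Z)$-automorphy $\widehat{E}_2(\gamma_j z) = (c_j z + d_j)^2\widehat{E}_2(z)$ at $z = \tau + k/M$ produces precisely the factor $(c\tau+d)^2$ that cancels against the $(c\tau+d)^{-2}$ out front, leaving $\widehat{E}_2(\tau + k/M)$.

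Substituting back turns the sum into $\frac{1}{M}\sum_j \zeta_M^{-jm}\widehat{E}_2(\tau + k(j)/M)$. Because $d^2$ is invertible modulo $M$ with inverse $a^2$ (from $ad\equiv 1\pmod M$), the map $j\mapsto k(j)$ is a bijection of $\Z/M\Z$; reindexing by $k$ via $j\equiv a^2 k\pmod M$ converts $\zeta_M^{-jm}$ into $\zeta_M^{-k(a^2 m)}$, and \zcref{prop:finsum} identifies the resulting expression as $\widehat{E}_2|S_{M,a^2 m}(\tau)$. The $\Gamma_1(M^2)$-invariance is then an immediate corollary: $a\equiv 1\pmod{M^2}$ forces $a^2\equiv 1\pmod M$, and since $S_{M,\cdot}$ depends only on the residue class of its index modulo $M$, we have $S_{M,a^2 m} = S_{M,m}$.

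The main obstacle is the bookkeeping needed to certify $\gamma_j\in\SL_2(\Z)$, and in particular to see where the strong divisibility $M^2\mid c$ (rather than merely $M\mid c$) is essential: it is precisely what kills the mixed term $jk(c/M)$ in the integrality condition for $b_j$, allowing the relation between $j$ and $k$ to collapse to the clean form $k\equiv jd^2\pmod M$. Once this alignment is in place, the cancellation $c_j(\tau+k/M)+d_j = c\tau+d$ and the reindexing step are essentially forced, and both halves of the proposition fall out simultaneously.
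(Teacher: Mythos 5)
Your proposal is correct and follows essentially the same route as the paper's proof: both rewrite $\widehat{E}_2|S_{M,m}$ via the finite sum of \zcref{prop:finsum}, establish the matrix identity $\delta_j\gamma=\gamma'\delta_{j'}$ with $j'\equiv jd^2$ (using $M^2\mid c$ for integrality of $\gamma'\in\SL_2(\Z)$), and reindex the sum to pick up the factor $\zeta_M^{-a^2 j'm}$. Your writeup is slightly more explicit about the cancellation $c_j(\tau+k/M)+d_j=c\tau+d$ and the bijectivity of $j\mapsto jd^2$, but the argument is the same.
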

\begin{proof}
	Let $\gamma=\begin{psmallmatrix}
		a&b\\c&d
	\end{psmallmatrix}\in \Gamma_0(M^2)$. Then for $\delta_j=\begin{psmallmatrix}
		1 & \frac{j}{M}\\0&1
	\end{psmallmatrix}$,
	\begin{equation}
		\widehat{E}_2|S_{M,m}| \gamma(\tau) = \frac{1}{M}\sum_{j=0}^{M-1}\widehat{E}_2|\delta_j| \gamma(\tau)\zeta_{M}^{-jm}. 
	\end{equation}
	
	Now take $j'\equiv jd^2\pmod{M^2}$ so that $jd\equiv j'a\pmod{M^2}$. Also, let 
	\begin{align}
		\gamma'\coloneqq \delta_j \gamma \delta_j'^{-1} & = \begin{pmatrix}
			1 & \frac{j}{M}\\0&1
		\end{pmatrix}\begin{pmatrix}
			a&b\\c&d
		\end{pmatrix}\begin{pmatrix}
			1&-\frac{j'}{M}\\0&1
		\end{pmatrix}\\
		& = \begin{pmatrix}
			a+\frac{jc}{M}&b-\frac{jj'c}{M^2}+\frac{jd-j'a}{M}\\ c&d-\frac{j'c}{M}
		\end{pmatrix}. 
	\end{align}
	Note that $M^2 \mid c$ and $a+\frac{jc}{M}\in \Z$ since $\gamma\in \Gamma_0(M^2)$. Moreover, we can have
	\begin{equation}
		d-\frac{j'c}{M}, b-\frac{jj'c}{M^2}+\frac{jd-j'a}{M} \in \mathbb{Z}
	\end{equation}
	for our choice of $j'$. Consequently, 
	\begin{align}
		\widehat{E}_2|S_{M,m}|\gamma (\tau)& = \frac{1}{M}\sum_{j=0}^{M-1}\widehat{E}_2|\delta_j|\gamma (\tau)\zeta_M^{-jm}= \frac{1}{M}\sum_{j'=0}^{M-1}\widehat{E}_2|\gamma' |\delta_{j'}(\tau)\zeta_M^{-a^2j'm}\\
		& =\frac{1}{M}\sum_{j'=0}^{M-1}\widehat{E}_2 |\delta_{j'}(\tau)\zeta_M^{-a^2j'm}= \widehat{E}_2|S_{M,a^2m}(\tau).
	\end{align}
	Observe that, if $\gamma=\begin{psmallmatrix}
		a&b\\c&d
	\end{psmallmatrix}\in \Gamma_1(M^2)$, then $a\equiv 1\pmod{M}$. This completes the proof.
\end{proof}

Using these transformations, we will try to find all the invariant elements of $\mathcal{E}_2(\Gamma_0(N))$ using the $S $ and $V$ operators. First, we recall the structure theorem for the groups $(\mathbb{Z}/2^n\mathbb{Z})^{\times}$ and for odd prime $p$, $(\mathbb{Z}/p^n\mathbb{Z})^{\times}$.

\begin{lemma}\label{lem:z2}  
	For $n\geq 2$, 
	\begin{enumerate}[label=\textnormal{(\roman*)}]
		\item $(\mathbb{Z}/2^n\mathbb{Z})^{\times}\cong \Z_2 \times (\Z/2^{n-2}\Z)$ via the isomorphism $\phi:\Z_2 \times (\Z/2^{n-2}\Z)\rightarrow (\mathbb{Z}/2^n\mathbb{Z})^{\times}$; given by $\phi(i,j)=(-1)^i(5)^j$.
		\item For an odd prime $p$, $(\mathbb{Z}/p^n\mathbb{Z})^{\times}=\langle g \rangle$ or $\langle g+p \rangle$ where $g$ is the generator of the cyclic group $(\mathbb{Z}/p\mathbb{Z})^{\times}.$
	\end{enumerate}
\end{lemma}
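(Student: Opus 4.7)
My plan is to treat the two parts separately, since they rest on different structural facts: part (i) exploits the fact that $(\Z/2^n\Z)^\times$ fails to be cyclic from $n=3$ onward and must be split as a direct product, while part (ii) relies on the cyclicity of $(\Z/p^n\Z)^\times$ for odd $p$ together with a lifting argument to identify an explicit generator.

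For part (i), the map $\phi\colon \Z_2\times (\Z/2^{n-2}\Z)\to (\Z/2^n\Z)^\times$, $(i,j)\mapsto (-1)^i 5^j$, is a well-defined group homomorphism once we know that $5$ has order dividing $2^{n-2}$ modulo $2^n$. Since both sides have order $2^{n-1}$, it suffices to prove injectivity. I would do this in two steps. First, show by induction on $k\geq 0$ that
\begin{equation}
  5^{2^k}\equiv 1+2^{k+2}\pmod{2^{k+3}},
\end{equation}
the base case $k=0$ being $5\equiv 1+4\pmod 8$ and the inductive step squaring and observing that the cross term $2\cdot 2^{k+2}=2^{k+3}$ dominates the square $2^{2k+4}$ modulo $2^{k+4}$. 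This forces the order of $5$ modulo $2^n$ to be exactly $2^{n-2}$ for $n\geq 2$, so $\phi$ restricted to $\{0\}\times(\Z/2^{n-2}\Z)$ is injective. Second, because every power of $5$ is $\equiv 1\pmod 4$, while $-1\equiv 3\pmod 4$, the element $-1$ does not lie in $\langle 5\rangle$, hence the cosets $\langle 5\rangle$ and $-\langle 5\rangle$ are disjoint and $\phi$ is injective overall.

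For part (ii), I would first recall that $(\Z/p^n\Z)^\times$ is cyclic of order $p^{n-1}(p-1)$; the slickest way is to decompose it as the internal direct product of the unique subgroup of order $p-1$ (the Teichm\"uller-type lift of $(\Z/p\Z)^\times$) and the group of $1$-units $U_1=\{1+pk\}$, the latter being cyclic of order $p^{n-1}$ generated by $1+p$ (via the $p$-adic logarithm or, equivalently, by showing inductively that $(1+p)^{p^k}\equiv 1+p^{k+1}\pmod{p^{k+2}}$ for $p$ odd). Given a generator $g$ of $(\Z/p\Z)^\times$, an element of $(\Z/p^n\Z)^\times$ lifting $g$ is a generator if and only if its image in $U_1/U_2$ is nontrivial, equivalently if and only if $g^{p-1}\not\equiv 1\pmod{p^2}$. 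If $g$ already satisfies this, we are done. Otherwise, expand
\begin{equation}
  (g+p)^{p-1}\equiv g^{p-1}+(p-1)g^{p-2}p\equiv 1-pg^{p-2}\pmod{p^2},
\end{equation}
which is $\not\equiv 1\pmod{p^2}$ since $p\nmid g$, so $g+p$ is a generator. Combined with the bijection between generators of $(\Z/p\Z)^\times$ and generators modulo $p$ of the full group, this gives the claim as stated.

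The main obstacle is the inductive $2$-adic estimate $5^{2^k}\equiv 1+2^{k+2}\pmod{2^{k+3}}$; once that is in place the rest of part (i) is bookkeeping. In part (ii), no single step is genuinely hard, but one must be careful to separate the cyclic-structure claim (which uses $\gcd(p-1,p^{n-1})=1$ and only works for odd $p$) from the explicit-generator claim, and to record the harmless ambiguity between $g$ and $g+p$ as in the statement.
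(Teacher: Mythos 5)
Your proof is correct. The paper itself offers no proof of this lemma --- it is introduced with ``we recall the structure theorem for the groups $(\Z/2^n\Z)^{\times}$ and $(\Z/p^n\Z)^{\times}$'' and treated as a classical fact --- so there is no argument in the paper to compare against. What you supply is the standard textbook derivation, and all the delicate points check out: the induction $5^{2^k}\equiv 1+2^{k+2}\pmod{2^{k+3}}$ pins down the order of $5$ as exactly $2^{n-2}$, the mod-$4$ obstruction correctly separates $-\langle 5\rangle$ from $\langle 5\rangle$, and in part (ii) the reduction of ``lift of $g$ generates'' to the single condition $g^{p-1}\not\equiv 1\pmod{p^2}$ (via $U_1^p=U_2$, which uses that $U_1$ is cyclic of $p$-power order) together with the binomial expansion of $(g+p)^{p-1}$ is exactly the right mechanism. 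The only cosmetic remark is that your argument, as written, establishes slightly more than the lemma asserts (an explicit criterion for which of $g$, $g+p$ works), which does no harm.
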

	
Next we try finding invariant elements under the action of $\Gamma_0(M^2)$ and get the following. 
\begin{proposition}\label{prop:invcond}
	If $f(\tau)=\sum_{m\in \Z/m\Z} \alpha_m\widehat{E}_2|S_{M,m}(\tau)$ is an invariant element under the action of $\Gamma_0(M^2)$ then 
	\begin{equation}
		\alpha_{a^{2}m}=\alpha_m,
	\end{equation}
	for all $a\in\left(\Z/ M^2\Z\right)^\times$.
\end{proposition}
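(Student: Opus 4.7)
The plan is to apply the transformation law from \zcref{prop:essasquarem} termwise to the invariance equation $f|\gamma=f$, reindex, and then read off the required identity by linear independence of the sieved family $\{\widehat{E}_2|S_{M,m}\}_{m\in\Z/M\Z}$.

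First I would verify that every $a\in(\Z/M^2\Z)^\times$ can be realized as the top-left entry of some $\gamma\in\Gamma_0(M^2)$: pick $d\in\Z$ with $ad\equiv 1\pmod{M^2}$, set $c=M^2$ and $b=(ad-1)/M^2\in\Z$, so that $\det\gamma=1$ and $c\equiv0\pmod{M^2}$. Invariance of $f$ under such $\gamma$ combined with \zcref{prop:essasquarem} gives
\begin{equation}
  f=f|\gamma=\sum_{m\in\Z/M\Z}\alpha_m\,\widehat{E}_2|S_{M,a^2m}.
\end{equation}
Since $a$ is a unit modulo $M$, multiplication by $a^2$ is a bijection on $\Z/M\Z$, so reindexing $m\mapsto a^{-2}m$ yields
\begin{equation}
  f=\sum_{m\in\Z/M\Z}\alpha_{a^{-2}m}\,\widehat{E}_2|S_{M,m}.
\end{equation}
Subtracting the original expression for $f$ produces $\sum_{m\in\Z/M\Z}(\alpha_m-\alpha_{a^{-2}m})\,\widehat{E}_2|S_{M,m}=0$.

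The concluding step is the linear independence of the sieved family, which is immediate from Fourier expansions: for $m\in\{1,\dots,M-1\}$, the coefficient of $q^m$ in $\widehat{E}_2|S_{M,m'}$ equals $-24\sigma_1(m)\,\delta_{m,m'}$, isolating a single summand, and the $m\equiv0$ term is separated either by its nonzero constant term (or equivalently by the non-holomorphic contribution $-3/(\pi\Im\tau)$, which survives sieving only when $m\equiv0\pmod{M}$). Consequently $\alpha_{a^{-2}m}=\alpha_m$ for all $m$, equivalently $\alpha_{a^2m}=\alpha_m$ for every $a\in(\Z/M^2\Z)^\times$.

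The only mild obstacle is the linear-independence step; the rest is formal bookkeeping on the slash action and one elementary reindexing. Notably, although the preceding proposition is phrased in terms of the actual top-left entry of $\gamma$, the resulting constraint on the $\alpha_m$'s depends only on $a$ modulo $M$, which is consistent with the surjection $(\Z/M^2\Z)^\times\twoheadrightarrow(\Z/M\Z)^\times$ and makes the quantifier over $a\in(\Z/M^2\Z)^\times$ natural.
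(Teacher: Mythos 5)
Your argument is correct and takes essentially the same route as the paper's proof: apply \zcref{prop:essasquarem} to the identity $f|\gamma=f$, reindex by $m\mapsto a^{-2}m$, and compare coefficients. You additionally make explicit two points the paper leaves implicit---that every $a\in(\Z/M^2\Z)^\times$ occurs as the top-left entry of some $\gamma\in\Gamma_0(M^2)$, and the linear independence of $\{\widehat{E}_2|S_{M,m}\}_{m}$ needed to equate coefficients---and both verifications are correct.
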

\begin{proof}
	Let $\gamma=\begin{psmallmatrix}
		a&b\\c&d
	\end{psmallmatrix}\in \Gamma_0(M^2)\subset \SL _2(\Z)$.
	Therefore, $c\equiv0\pmod{M}$ and $ ad\equiv1\pmod{M^2}$ implies $a\in\left(\Z/ M^2\Z\right)^\times$. Let an invariant element  under the action of $\Gamma_0(M^2)$ is given by \begin{equation}
		f(\tau)=\sum_{m\in \Z/m\Z} \alpha_m\widehat{E}_2|S_{M,m}(\tau).
	\end{equation}
	Now, using \zcref{prop:essasquarem} and invariance, if $ f(\tau)=f|\gamma(\tau)$, then,
	\begin{align}
		\sum_{m\in \Z/m\Z} \alpha_m\widehat{E}_2|S_{M,m}(\tau)&= \sum_{m\in \Z/m\Z} \alpha_m\widehat{E}_2|S_{M,m}|\gamma(\tau)\\
		&= \sum_{m\in \Z/m\Z}\alpha_m\widehat{E}_2|S_{M,a^2m}(\tau)\\
		&\stackrel{\mathmakebox[\widthof{=}]{\text{$a^2m\mapsto m$}}}{=} \hspace{10pt} \sum_{m\in \Z/m\Z} \alpha_{a^{-2}m}\widehat{E}_2|S_{M,m}(\tau).
	\end{align}
	Therefore, we get $\alpha_{a^{-2}m}=\alpha_m \text{ which is same as } \alpha_m=\alpha_{a^2m} .$
\end{proof}

\begin{theorem}\label{thm:e2basisthm}
	Let $f=\displaystyle\sum_{m \in\left(\Z/ 2^n\Z\right)^\times}  c_m\widehat{E}_2|_{S_{2^n,m}} \in   M_2(\Gamma_1(2^{2n}))$, for some constants $c_m$. Then $f \in  M_2(\Gamma_0(2^{2n})) $  if and only if  
	\begin{equation}
		f\in \emph{Span}\{\widehat{E}_2|_{S_{2^j,m}}\, :\,  j=1, \ldots  ,\min(3, n),  \, m \in\left(\Z/ 2^j\Z\right)^\times\}.
	\end{equation}
\end{theorem}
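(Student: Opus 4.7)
The plan is to combine two ingredients: \zcref{prop:invcond}, which translates $\Gamma_0(2^{2n})$-invariance of $f = \sum_{m \in (\Z/2^n\Z)^\times} c_m \widehat{E}_2|S_{2^n, m}$ into the arithmetic condition $c_{a^2 m} = c_m$ for all $a \in (\Z/2^{2n}\Z)^\times$, and the structure of $(\Z/2^n\Z)^\times$ from \zcref{lem:z2}(i). A Fourier-coefficient identity I will use repeatedly is
\begin{equation*}
    \widehat{E}_2|S_{2^j, m_0} = \sum_{\substack{m \in (\Z/2^n\Z)^\times \\ m \equiv m_0 \pmod{2^j}}} \widehat{E}_2|S_{2^n, m}
\end{equation*}
for $j \leq n$ and $m_0 \in (\Z/2^j\Z)^\times$, which follows directly from comparing $q$-expansions and lets us move between ``fine'' generators at level $2^n$ and ``coarse'' generators at level $2^j$.

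For the easier containment ($\Leftarrow$), it suffices to verify that every generator $\widehat{E}_2|S_{2^j, m}$ with $j \leq \min(3, n)$ lies in $M_2(\Gamma_0(2^{2n}))$. For any $\gamma = \begin{psmallmatrix} a & b \\ c & d \end{psmallmatrix} \in \Gamma_0(2^{2n}) \subseteq \Gamma_0(2^{2j})$, \zcref{prop:essasquarem} gives $\widehat{E}_2|S_{2^j, m}|\gamma = \widehat{E}_2|S_{2^j, a^2 m}$, and the classical fact $a^2 \equiv 1 \pmod{8}$ for odd $a$ forces $a^2 m \equiv m \pmod{2^j}$ as soon as $j \leq 3$, giving invariance.

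For the harder direction ($\Rightarrow$), suppose $f \in M_2(\Gamma_0(2^{2n}))$. Since the reduction $(\Z/2^{2n}\Z)^\times \twoheadrightarrow (\Z/2^n\Z)^\times$ is surjective, \zcref{prop:invcond} says that $c_m$ is constant on cosets of the subgroup $H \subseteq (\Z/2^n\Z)^\times$ of squares. For $n \geq 3$, \zcref{lem:z2}(i) gives $(\Z/2^n\Z)^\times \cong \Z_2 \times \Z/2^{n-2}\Z$, from which $H$ has index $4$; since $a^2 \equiv 1 \pmod{8}$ for every odd $a$, we have $H \subseteq \{m : m \equiv 1 \pmod{8}\}$, and the two sides both have size $2^{n-3}$, so they coincide. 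Therefore $c_m$ depends only on $m \pmod{8}$, and setting $\beta_{m_0} \coloneqq c_m$ for any $m \equiv m_0 \pmod{8}$ and applying the key identity yields
\begin{equation*}
    f = \sum_{m_0 \in (\Z/8\Z)^\times} \beta_{m_0}\, \widehat{E}_2|S_{8, m_0},
\end{equation*}
which lies in the claimed span. The cases $n = 1, 2$ reduce to direct inspection: $(\Z/2\Z)^\times$ is trivial and squaring on $(\Z/4\Z)^\times$ is trivial, so every $f$ of the given form is already $\Gamma_0(2^{2n})$-invariant and visibly in the span.

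The main obstacle, and the place where the cutoff $\min(3, n)$ enters the theorem, is the $2$-adic bookkeeping identifying the squares subgroup of $(\Z/2^n\Z)^\times$ with the index-$4$ subgroup of residues $\equiv 1 \pmod{8}$; this is precisely what makes $j = 3$ the largest useful level. Once this identification is in hand, the remainder is an essentially linear-algebraic repackaging carried out through the Fourier-expansion identity above.
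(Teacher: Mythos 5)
Your proposal is correct and follows essentially the same route as the paper: \zcref{prop:invcond} forces $c_m$ to be constant on cosets of the subgroup of squares in $(\Z/2^n\Z)^{\times}$, which by \zcref{lem:z2} is identified with the residues $\equiv 1\pmod 8$, so that $f$ collapses to a combination of the $\widehat{E}_2|S_{8,m_0}$. Your write-up is in fact somewhat more complete than the paper's, since you also spell out the coarsening identity $\widehat{E}_2|S_{2^j,m_0}=\sum_{m\equiv m_0\ (2^j)}\widehat{E}_2|S_{2^n,m}$, verify the converse inclusion via \zcref{prop:essasquarem}, and treat $n=1,2$ explicitly.
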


\begin{proof}
	From \zcref{prop:invcond}, for a fixed $m\in \left(\Z/2^n\Z\right)^{\times}$,
	\begin{equation}
		c_m=c_{a^2m}, \text{ for all } a\in \left(\Z/2^n\Z\right)^{\times}.
	\end{equation}
	By \zcref{lem:z2}, if $n \geq 3$, the cyclic group $H\coloneqq\{a^2\mid  a \in \left(\Z/2^n\Z\right)^{\times}\}$ is a group of order $2^{n-3}$ in $\left(\Z/2^n\Z\right)^{\times}$ that does not contain $-1$. Hence,
	\begin{equation}
		\left(\Z/2^n\Z\right)^{\times}/ H \cong \left(\Z/8\Z\right)^{\times}=\{1,3,5,7\}.
	\end{equation}
	Hence, if $n \geq 3$, we get
	\begin{equation}
		f\in \mbox{Span}\{\widehat{E}_2|_{S_{8,m}}\ :\ m \in\left(\Z/ 2^j\Z\right)^\times\}.
	\end{equation}

\end{proof}

\begin{remark}\label{rem:oddprimep}
	For the case of odd primes $p$, there are only three independent elements in the group $\mathcal{E}_2(\Gamma_0(p^2))$, namely, $\widehat{E}_2|(S_{p,0}-pV_p), \sum_{\left(\frac{a}{p}\right)=1}\widehat{E}_2|S_{p,a}$ and $\sum_{\left(\frac{b}{p}\right)=-1}\widehat{E}_2|S_{p,b} $. Here, $\left(\frac{x}{p}\right)$ is the Legendre symbol. 
\end{remark}

\begin{example}
	For $n=4$, we check $5^2\equiv 9\pmod{16}, 5^4\equiv 1\pmod{16}$. The second condition makes $c_m=c_{m}$, trivially true. But the first condition makes
	\begin{equation}
		c_1=c_9, \qquad c_3=c_{11}, \qquad c_5=c_{13}, \qquad c_7=c_{15}.
	\end{equation}
	
	Therefore, the invariant elements will be
	\begin{align}
		\widehat{E}_2|S_{16,1}+\widehat{E}_2|S_{16,9}&=\widehat{E}_2|S_{8,1},\\
		\widehat{E}_2|S_{16,3}+\widehat{E}_2|S_{16,11}&=\widehat{E}_2|S_{8,3},\\
		\widehat{E}_2|S_{16,5}+\widehat{E}_2|S_{16,13}&=\widehat{E}_2|S_{8,5},\\
		\widehat{E}_2|S_{16,7}+\widehat{E}_2|S_{16,15}&=\widehat{E}_2|S_{8,7}.
	\end{align}
\end{example}

We finish this section with the following proposition that is useful for later simplifications of Eisenstein elements. 
\begin{lemma}\label{lem:e2v2}
	For $\ell$ odd
	\begin{equation}
		\widehat{E}_2|S_{2^k,\ell}|V_2=\frac{1}{3}\widehat{E}_2|S_{2^{k+1},2\ell}.
	\end{equation}
\end{lemma}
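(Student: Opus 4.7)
The plan is to verify the identity by comparing the Fourier expansions on the two sides directly, using the explicit formula $\widehat{E}_2(\tau)=1-\frac{3}{\pi\im(\tau)}-24\sum_{n\geq 1}\sigma_1(n)q^n$ together with the finite-sum representation of $S_{M,m}$ from \zcref{prop:finsum}. The approach decouples cleanly into three independent pieces: the constant term, the non-holomorphic term, and the holomorphic $q$-series. (I assume the implicit hypothesis $k\geq 1$, since for $k=0$ the left-hand side would retain a surviving non-holomorphic contribution that does not scale by $\tfrac13$.)

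First I would check that both the constant $1$ and the non-holomorphic correction $-3/(\pi\im(\tau))$ are annihilated by each of the two sievings. Since $\im(\tau+j/M)=\im(\tau)$, the formula of \zcref{prop:finsum} factors these contributions out as $\bigl(\tfrac{1}{M}\sum_{j=0}^{M-1}\zeta_M^{-jm}\bigr)\bigl(1-\tfrac{3}{\pi\im(\tau)}\bigr)$, and the character sum vanishes unless $m\equiv 0\pmod{M}$. Because $\ell$ is odd and $k\geq 1$, neither $\ell\equiv 0\pmod{2^k}$ nor $2\ell\equiv 0\pmod{2^{k+1}}$, so both $\widehat{E}_2|S_{2^k,\ell}$ and $\widehat{E}_2|S_{2^{k+1},2\ell}$ are purely holomorphic expansions with zero constant term.

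Next I would apply $V_2$ and reindex. Starting from $\widehat{E}_2|S_{2^k,\ell}(\tau)=-24\sum_{n\equiv\ell\,(\text{mod }2^k)}\sigma_1(n)q^n$ and writing $m=2n$, the action of $V_2$ yields
\begin{equation}
\widehat{E}_2|S_{2^k,\ell}|V_2(\tau)=-24\sum_{\substack{m\geq 2 \\ m\equiv 2\ell\,(\text{mod }2^{k+1})}}\sigma_1(m/2)\,q^{m},
\end{equation}
since the residue class $n\equiv\ell\pmod{2^k}$ translates precisely into $m\equiv 2\ell\pmod{2^{k+1}}$ (and the even parity of $m$ is automatic). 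Because $\ell$ is odd, every such $m$ satisfies $m\equiv 2\pmod{4}$, so $m=2n$ with $n$ odd; multiplicativity of $\sigma_1$ (using $\gcd(2,n)=1$) then gives $\sigma_1(m)=\sigma_1(2)\sigma_1(n)=3\sigma_1(m/2)$, and substituting produces exactly $\tfrac13\widehat{E}_2|S_{2^{k+1},2\ell}$.

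No step presents a genuine obstacle; the proof is essentially a bookkeeping exercise. The only subtlety worth flagging is the parity check that kills the non-holomorphic part on both sides—this is what forces the hypothesis "$\ell$ odd" into the statement and ultimately reduces the lemma to the elementary identity $\sigma_1(2n)=3\sigma_1(n)$ for odd $n$.
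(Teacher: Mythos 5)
Your proof is correct and follows essentially the same route as the paper: reindex $m=2n$ under $V_2$ and use multiplicativity of $\sigma_1$ with $\sigma_1(2)=3$, noting that $m/2$ is odd. You are in fact slightly more careful than the paper, which leaves the vanishing of the constant and non-holomorphic terms implicit and contains a small slip (it says ``$m$ is odd'' where it means $m/2=n$ is odd); your explicit handling of these points, and of the hypothesis $k\geq 1$, is a welcome tightening rather than a deviation.
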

\begin{proof}
	\begin{align}
		\widehat{E}_2|S_{2^k,l}|V_2(\tau)&=\sum_{n\equiv l \pmod {2^k}}\sigma_1(n)q^{2n} \stackrel{\text{m=2n}}{=}\sum_{m\equiv 2l\pmod {2^{k+1}}}\sigma_1\left(\frac{m}{2}\right)q^{m} 
		\\
		&=\sum_{m\equiv 2l\pmod {2^{k+1}}}\left(\frac{\sigma_1(m)}{\sigma_1(2)}\right)q^{m}.      
	\end{align}     
	The last line follows from the fact $\sigma_1$ is multiplicative. As $m$ is odd, we get that
	\begin{equation*}      
		\widehat{E}_2|S_{2^k,l}|V_2(\tau)=\frac{1}{3}\sum_{m\equiv 2l \pmod {2^{k+1}}}\sigma_1(m)q^m=\frac{1}{3}\widehat{E}_2|S_{2^{k+1},2l}(\tau). \qedhere
	\end{equation*}
\end{proof}

\section{Normalized basis of \texorpdfstring{$\mathcal{E}_2(\Gamma_0(p^k))$}{the space of weight 2 Eisenstein series of level p\string^k} using \texorpdfstring{$S$}{S} and \texorpdfstring{$V$}{V} operators}\label{sec:basissection}
In this section, we formulate a method to generate a basis for Eisenstein series space of weight 2 on $\Gamma_0(p^k)$ for primes $p$ up to a certain exponent $k$ by proving \zcref{thm:basisgenerationthm}. The issue for higher exponents is also explained in the process.

Recall that the number of cusps of $\Gamma_0(N)$ and the dimension of the Eisenstein series space on $\G_0(N)$ are closely related. 
Referring back to Proposition 8.5.21 in \cite{book:cohen-stromberg}, we get that 
\begin{equation}\label{eq:dimofEseries}
	\dim (\mathcal{E}_2(\Gamma_0(N)))=e(N)-1,
\end{equation}
where $e(N)$ is the number of non-equivalent cusps of $\Gamma_0(N)$. Furthermore, Proposition 8.5.15 in \cite{book:cohen-stromberg} gives us the formula to calculate $e(N)$. 
\begin{proposition}
	The number of cusps of $\Gamma_0(N)$ is
	\begin{equation}\label{eq:cuspcounting}
		e(N)=\sum_{d\mid N}\phi\left(\gcd\left(d, \frac{N}{d}\right)\right),
	\end{equation}
	where $\phi(n)$ is the \emph{Euler phi function}, counting the natural numbers small than $n$ and co-prime to $n$.  
\end{proposition}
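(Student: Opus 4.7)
The plan is to parameterize the $\Gamma_0(N)$-equivalence classes of cusps by divisors of $N$ together with suitable residues, then count them directly. Throughout, a cusp is represented by a fraction $a/c \in \mathbb{P}^1(\Q)$ in lowest terms, with $\infty = 1/0$.

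First I would reduce to a canonical set of representatives. Given an arbitrary cusp $a'/c'$ with $\gcd(a',c') = 1$, set $c \coloneqq \gcd(c', N)$; a Bezout-based construction produces an element of $\SL_2(\Z)$ with lower-left entry divisible by $N$ (hence in $\Gamma_0(N)$) that carries $a'/c'$ to a fraction with denominator exactly $c$. Thus every cusp is $\Gamma_0(N)$-equivalent to some $a/c$ with $c \mid N$, $c > 0$, and $\gcd(a,c) = 1$.

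Next I would prove the equivalence criterion: for divisors $c \mid N$ and residues with $\gcd(a,c) = \gcd(a',c) = 1$,
\begin{equation}
\frac{a}{c} \sim_{\Gamma_0(N)} \frac{a'}{c} \iff a \equiv a' \pmod{\gcd(c, N/c)}.
\end{equation}
For the forward direction, write $\gamma = \begin{psmallmatrix} p & q \\ r & s \end{psmallmatrix} \in \Gamma_0(N)$ with $\gamma(a/c) = a'/c$. Matching numerators and denominators of $(pa+qc)/(ra+sc)$ against $a'/c$, together with $N \mid r$, $ps - qr = 1$, and $\gcd(a,c) = 1$, forces $r = Nk$ for some integer $k$, then $s \equiv 1 \pmod{N/c}$, and finally $a \equiv s a' \pmod{c}$; since $\gcd(c, N/c)$ divides both $c$ and $N/c$, these combine to give $a \equiv a' \pmod{\gcd(c, N/c)}$. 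The converse is the substantive half: given the congruence, one constructs an explicit $\gamma \in \Gamma_0(N)$ realizing the equivalence, again via a Bezout-type argument exploiting the factorization $N = c \cdot (N/c)$.

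Finally, for each fixed $c \mid N$, cusps of denominator $c$ correspond to the quotient of $(\Z/c\Z)^\times$ by the equivalence of congruence modulo $\gcd(c, N/c)$. Since $\gcd(c, N/c) \mid c$, the reduction map $(\Z/c\Z)^\times \to (\Z/\gcd(c,N/c)\Z)^\times$ is surjective, so the number of classes is exactly $\phi(\gcd(c, N/c))$. Summing over divisors $c$ of $N$ yields the claimed formula. The main obstacle lies in the converse direction of the equivalence criterion: constructing a matrix in $\Gamma_0(N)$ that simultaneously meets $N \mid r$ and performs the prescribed change of numerator is delicate, and it is precisely here that the arithmetic significance of $\gcd(c, N/c)$ becomes essential.
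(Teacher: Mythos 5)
The paper does not prove this proposition at all --- it simply quotes Proposition 8.5.15 of Cohen--Str\"omberg --- and your outline is precisely the standard argument underlying that reference: reduce every cusp to a representative $a/c$ with $c\mid N$, establish the equivalence criterion $a\equiv a'\pmod{\gcd(c,N/c)}$, and count fibers of the surjection $(\Z/c\Z)^{\times}\to(\Z/\gcd(c,N/c)\Z)^{\times}$. The plan is sound, but note two items you defer or omit: first, the converse of the equivalence criterion (constructing $\gamma\in\Gamma_0(N)$ from the congruence) is asserted rather than carried out, and it is the only step requiring real work; second, you never verify that cusps $a/c$ and $a'/c'$ with \emph{distinct} divisors $c\neq c'$ of $N$ are inequivalent --- without this the sum over divisors could overcount. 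The latter follows quickly from your own forward computation: if $\gamma\in\Gamma_0(N)$ carries $a/c$ to $a'/c'$, then the denominator relation $ra+sc=\pm c'$ with $N\mid r$ and $\gcd(s,N)=1$ gives $\gcd(c',N)=\gcd(c,N)$, so $c'=c$. With those two points filled in, the count $\phi(\gcd(c,N/c))$ per divisor and hence the stated formula follow exactly as you describe.
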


Combining \zcref{eq:dimofEseries,eq:cuspcounting} we get that, for any prime $p$,  
\begin{equation}\label{eq:exactdimension}
	\dim(\mathcal{E}_2(\G_0(p^k)))=\begin{cases}
		(p+1)p^{\frac{k}{2}-1}-1, & \text{if } k \text{ is even,} \\
		2p^{\frac{k-1}{2}}-1      & \text{if } k \text{ is odd.}
	\end{cases}
\end{equation}

\begin{proof}[Proof of \zcref{thm:basisgenerationthm}]
	Observe that, for any $k\in \N\cup \{0\}$, the newforms of $\mathcal{E}_2(\G_0(p^{2k+1}))$ will be of the form $\widehat{E}_2|S_{p^{k+1},m}$ for $m \in (\Z/p^{k+1}\Z)^{\times}$, as $\widehat{E}_2|S_{p^{k},m}\in \mathcal{E}_2(\G_0(p^{2k})) $ for all $m \in (\Z/p^{k}\Z)^{\times}  $ by \zcref{lem:svlevel}. Moreover,  \zcref{lem:svlevel} gives that a newform  of the type $\widehat{E}_2|S_{p^{k+1},m} \in \mathcal{E}_2(\G_0(p^{2k+2} ))$. Therefore, 
	$\mathcal{M}_2(\G_0(p^{2k+1}))$ will not have any newform. Furthermore, from \zcref{eq:exactdimension}, the size of the set  $\{f,f|V_p: f \text{ is a basis element of } \mathcal{E}_2(\G_0(p^{2k})) \} $, is larger than $\dim(\mathcal{E}_2(\G_0(p^{2k+1})))$. Therefore, we get a basis for $\dim(\mathcal{E}_2(\G_0(p^{2k+1})))$ from the spanning set $\{f,f|V_p: f \text{ is a basis element of } \mathcal{E}_2(\G_0(p^{2k})) \} $. For this reason, we focus only on finding a basis for $\mathcal{E}_2(\G_0(p^{2k}))$.
	
	\paragraph{Case 1: $p=2$}
	Following \zcref{thm:e2basisthm}, we can only make a complete basis  for $\mathcal{E}_2(\Gamma_0(2^{2k}))$ with the help of the operators $S$ and $V$ when $k\leq 3$. We start with the newforms of the type $\widehat{E}_2|S_{2^k,m}(\tau)$ for $ m \in (\Z/2^k\Z)^{\times}$, there are $2^{k-1}$ such elements. Next, we include the oldform $\widehat{E}_2|(2V_2-S_{2,0})|V_{2^{2k-2}}(\tau)$, which is the only element with a nonzero constant term in its Fourier expansion. Finally, we take oldforms from the set $\{f, f|V_2 : f \text{ is a basis element of } \mathcal{E}_2(\G_0(p^{2k-1}))\}$. Thus, we get $2(2\cdot 2^{k-1}-1)+1+2^{k-1}$ elements in total. Since $\dim(\mathcal{E}_2(\Gamma_0(2^{2k})))=3\cdot2^{k-1}-1$, we can get a basis for $\mathcal{E}_2(\Gamma_0(2^{2k}))$ from the spanning set mentioned above.
	
	\paragraph{Case 2: $p$ odd}
	Following \zcref{rem:oddprimep}, there are only three independent elements in $\mathcal{E}_2(\Gamma_0(p^2))$ when $p$ is an odd prime. However, by \zcref{eq:exactdimension}, $\dim (\mathcal{E}_2(\Gamma_0(p^2)))=p$. Therefore, we cannot generate the entire basis set for $\mathcal{E}_2(\Gamma_0(p^2))$ using $S,V$ operators for $p>3$. For $p=3$, a basis for $\mathcal{E}_2(\Gamma_0(3^2))$ is given by the elements $\widehat{E}_2|S_{3,m}(\tau)$ for $ m\in (\Z/3\Z)^{\times}$ and the element $\widehat{E}_2|(3V_3-S_{3,0})(\tau)$.
\end{proof}

\begin{remark}
	A list of indexed basis elements is given in \ref{app:evenbasis} and \ref{app:oddbasis} at the end. We are going to use these basis elements to extract the Eisenstein series parts of our congruent theta series in the next four sections. 
	
\end{remark}

\section{The theta series \texorpdfstring{$\theta_{k,2}$}{theta\_k,2}}\label{sec:theta2}
In this section, we work with the congruent theta series $\theta_{k,2}$.  
We will use the basis in \ref{app:evenbasis} to obtain the Eisenstein series part of $\theta_{k,2}$ using the values at the cusps. 

\subsection{Analysis of \texorpdfstring{$\theta_{0,2}$}{theta\_0,2}}
Notice that from \zcref{eq:jacobitheta},
\begin{equation}
	\theta_3(8\tau)^4=\left(\sum_{n \in \Z}q^{8\frac{n^2}{2}}\right)^4=\left(\sum_{n \in \Z}q^{(2n)^2}\right)^4=\theta_{0,2}(\tau).
\end{equation}
Recall from \cite{MR3848417}, $\theta_3(2\tau)^4= \EE 4 1(\tau)+8\EE 4 2 (\tau)$, and hence 

\begin{equation}\label{eq:theta02}
	\theta_{0,2}(\tau)=( \EE 4 1+8\EE 4 2)|V_4(\tau)=\EE {16} 1 (\tau)+8\EE {16} 5(\tau).
\end{equation}
Notice that the cusp form part in   $\theta_{0,2}$ is 0. Therefore, looking at the Eisenstein elements and noticing the operators applied, we get the following result.

\begin{proposition}
	For $n \in \N \cup \{0\}$, 
	
	\begin{equation}
		r_{0,2}(n)=
		\begin{cases}
			1,                                                                                 & \textnormal{if } n = 1,                      \\
			24\left[\sigma_1\left(\frac{n}{4}\right)-2\sigma_1\left(\frac{n}{8}\right)\right], & \textnormal{if } n \equiv 0 \pmod{8}, n > 1, \\
			8\sigma_1\left(\frac{n}{4}\right),                                                 & \textnormal{if } n \equiv 4 \pmod{8},        \\
			0,                                                                                 & \textnormal{otherwise.}
		\end{cases}
	\end{equation}  
\end{proposition}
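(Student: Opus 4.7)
The plan is to extract $r_{0,2}(n)$ directly from the decomposition $\theta_{0,2}(\tau) = \EE{16}{1}(\tau) + 8\EE{16}{5}(\tau)$ given in \eqref{eq:theta02}. Since the cusp form part of $\theta_{0,2}$ is zero, $r_{0,2}(n)$ is literally the $n$-th Fourier coefficient of this Eisenstein combination, so the work reduces to computing that coefficient in a clean, case-by-case form.

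The first observation is that \eqref{eq:theta02} factors through the $V_4$ operator, $\theta_{0,2}(\tau) = (\EE{4}{1} + 8\EE{4}{2})|V_4(\tau)$, which also reflects the substitution $x_i \mapsto 2x_i$ forcing $4 \mid Q(\vec{x})$. Consequently, $r_{0,2}(n) = 0$ whenever $n \not\equiv 0 \pmod{4}$, and the constant term of $\theta_{0,2}$ gives $r_{0,2}(0) = 1$; these dispose of the first and last cases of the proposition (taking the "$n=1$" in the statement as a typo for "$n=0$", since $r_{0,2}$ vanishes on odd arguments).

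Next, I would plug into $\EE{4}{1} + 8\EE{4}{2} = \theta_3(2\tau)^4$ the explicit expansions from the preceding section: $\EE{4}{2}$ arises from $\widehat{E}_2|S_{2,1}$ and $\EE{4}{1}$ from $\widehat{E}_2|(2V_2 - S_{2,0})$, whose $q$-expansions are immediate from \zcref{prop:finsum}. Collecting terms reproduces Jacobi's four-square identity in the form
\begin{equation*}
\theta_3(2\tau)^4 = 1 + 8\sum_{m \geq 1} \sigma_1^{*}(m)\, q^m, \qquad \sigma_1^{*}(m) \coloneqq \sum_{\substack{d \mid m \\ 4 \nmid d}} d.
\end{equation*}
Applying $|V_4$ replaces $q^m$ by $q^{4m}$, so $r_{0,2}(n) = 8\sigma_1^{*}(n/4)$ whenever $4 \mid n$ and $n > 0$.

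Finally, I would rewrite $\sigma_1^{*}(n/4)$ in terms of ordinary $\sigma_1$, splitting on $n \bmod 8$. When $n \equiv 4 \pmod{8}$, $n/4$ is odd so $\sigma_1^{*}(n/4) = \sigma_1(n/4)$, yielding $r_{0,2}(n) = 8\sigma_1(n/4)$. When $n \equiv 0 \pmod{8}$, write $n/4 = 2^{a} m'$ with $m'$ odd and $a \geq 1$; divisors of $n/4$ not divisible by $4$ contribute $(1 + 2)\sigma_1(m') = 3\sigma_1(m')$, while $\sigma_1(n/4) - 2\sigma_1(n/8) = [(2^{a+1}-1) - 2(2^a-1)]\sigma_1(m') = \sigma_1(m')$, giving $\sigma_1^{*}(n/4) = 3[\sigma_1(n/4) - 2\sigma_1(n/8)]$ and hence $r_{0,2}(n) = 24[\sigma_1(n/4) - 2\sigma_1(n/8)]$. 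The only mild obstacle is this last divisor sum identity, which is a straightforward $2$-adic bookkeeping exercise but is the one place where a nontrivial manipulation (rather than operator-level bookkeeping) is required.
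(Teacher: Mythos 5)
Your proposal is correct and follows essentially the same route as the paper: it rests on the decomposition $\theta_{0,2}=(\EE{4}{1}+8\EE{4}{2})|V_4=\EE{16}{1}+8\EE{16}{5}$ of \zcref{eq:theta02} and reads the answer off the Eisenstein coefficients, which is exactly what the paper does (the paper leaves the coefficient extraction implicit, reading $24[\sigma_1(2m)-2\sigma_1(m)]$ and $8\sigma_1(k)$ directly from the $S$- and $V$-operator expansions of the basis elements). Your only deviation is the detour through Jacobi's $\sigma_1^{*}$ form followed by the $2$-adic conversion back to ordinary $\sigma_1$, which is a correct but equivalent bookkeeping of the same identity; your reading of the ``$n=1$'' case as the constant term $r_{0,2}(0)=1$ is also the right interpretation.
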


\subsection{Analysis of \texorpdfstring{$\theta_{1,2}$}{theta\_1,2}}
Notice that
\begin{equation}
	\theta_{1,2}(\tau)=16q^4+64q^{12}+96q^{20}+128q^{28}+O(q^{36}).
\end{equation}
Again, using \zcref{thm:thetamod}, we get that,
\begin{equation}
	\theta_{1,2}(\tau)\in \mathcal{M}_2(\Gamma_0(16)\cap \Gamma_1(2)).
\end{equation}
We get the following values at the cusp for $\theta_{1,2}(\tau).$
\begin{center}
	\begin{tabular}{ |c|c|c|c|c|c|c| } 
		\hline

		cusps of $\G_0(16)$ & $\frac{1}{8}$& $\frac{1}{4}$  & $\frac{3}{4}$  & $\frac{1}{2}$ &  0 &$\infty$ \\
		\hline\hline
		
		values of $\theta_{1,2}$ & 1&$-\frac{1}{4}$&$-\frac{1}{4}$&$-\frac{1}{16}$&$-\frac{1}{64}$&0\\ \hline
	\end{tabular}
\end{center}

Therefore, comparing the cusp values from the table in \ref{app:evenbasis}, we get
\begin{equation}\label{eq:theta12}
	\theta_{1,2}(\tau)=16\EE {16} 5(\tau).
\end{equation}
Again, there is no cusp form part in  $\theta_{1,2}$. Hence, looking at the operators applied in the Eisenstein series part, we get the following result.
\begin{proposition}\label{prop:r12} 
	For any $n \in \N\cup \{0\}$,
	\begin{equation}
		r_{1,2}(n)=\begin{cases}
			16\sigma_1\left(\frac{n}{4}\right), & \textnormal{if }n \equiv4\pmod{8}, \\
			0, \hspace{15pt}                    & \textnormal{otherwise.}
		\end{cases}
	\end{equation}
\end{proposition}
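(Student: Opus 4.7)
The plan rests on two independent inputs: a parity congruence for the vanishing part, and a Fourier-coefficient extraction from \zcref{eq:theta12} for the non-vanishing part.

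First I would handle the vanishing arithmetically, without invoking modularity at all. If each $x_i$ is odd, write $x_i = 2y_i+1$; then $x_i^2 = 4y_i(y_i+1)+1 \equiv 1 \pmod 8$, because $y_i(y_i+1)$ is always even. Summing four such squares forces
\begin{equation*}
    x_1^2+x_2^2+x_3^2+x_4^2 \equiv 4 \pmod 8,
\end{equation*}
so $r_{1,2}(n)=0$ whenever $n \not\equiv 4 \pmod 8$. This is consistent with the right-hand side of \zcref{eq:theta12}, which is a single scalar multiple of $\EE{16}{5}$, a basis element whose support must lie in the same residue class.

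For $n \equiv 4 \pmod 8$, I would extract the coefficient of $q^n$ from $16\,\EE{16}{5}(\tau)$. According to \ref{app:evenbasis}, the basis element $\EE{16}{5}$ arises, up to normalization, from $\widehat{E}_2$ by first sieving out the odd-index terms via $S_{2,1}$ and then applying $V_4$. A direct calculation starting from $\widehat{E}_2(\tau) = 1 - \tfrac{3}{\pi\Im(\tau)} - 24\sum_{n\geq 1}\sigma_1(n)q^n$ and using \zcref{prop:finsum} gives
\begin{equation*}
    \widehat{E}_2|S_{2,1}|V_4(\tau) = -24 \sum_{\substack{m\geq 1 \\ m\equiv 4 \pmod 8}} \sigma_1(m/4)\, q^m,
\end{equation*}
so after the normalization used to record $\EE{16}{5}$ in the appendix, the $m$-th Fourier coefficient of $\EE{16}{5}$ equals $\sigma_1(m/4)$ exactly for $m \equiv 4 \pmod 8$. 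Multiplying by $16$ delivers $r_{1,2}(n) = 16\sigma_1(n/4)$ as claimed, and the vanishing statement is already covered by the first paragraph.

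I do not anticipate a genuine obstacle: the congruence is immediate and the coefficient extraction is routine once \zcref{eq:theta12} is available. The only book-keeping point requiring care is aligning the normalization convention used in \ref{app:evenbasis} with the raw operator expression $\widehat{E}_2|S_{2,1}|V_4$; any rescaling is absorbed into the coefficient $16$ in \zcref{eq:theta12}, which itself was fixed by matching the six cusp values of $\theta_{1,2}$ tabulated in the previous subsection against those of the basis elements of $\mathcal{E}_2(\Gamma_0(16))$.
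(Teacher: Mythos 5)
Your proposal is correct and follows essentially the same route as the paper: it takes the decomposition $\theta_{1,2}=16\,\mathbb{E}_{16,5}$ from \zcref{eq:theta12} (established by matching cusp values, with no cuspidal part to worry about) and reads off the Fourier coefficients of $c\,\widehat{E}_2|S_{2,1}|V_4$, which are $\sigma_1(n/4)$ supported on $n\equiv 4\pmod 8$. The elementary mod-$8$ congruence in your first paragraph is a nice independent check of the vanishing, but it is not needed, since the support of $\mathbb{E}_{16,5}$ already gives it.
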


\section{Theta series \texorpdfstring{$\theta_{k,4}(\tau)$}{theta\_k,4 (tau)}}\label{sec:theta4} 
In this section, we show a detailed analysis of the congruent theta series $\theta_{k,4}(\tau)$. We decompose $\theta_{k,4}$ into Eisenstein series and the cusp form parts. Thereafter, the Fourier expansions of these parts help us to precisely formulate the Fourier coefficients of $\theta_{k,4}(\tau)$ which is exactly the number of representations of natural numbers by the congruent quadratic form $Q_{k,4}$. Also, all the Fourier expansions in these sections can be obtained by \cite{maple} or any other computer algebra system.  We start this section with $k=1$ which is the same for $k=3$.

\subsection{Analysis of \texorpdfstring{$\theta_{1,4}(\tau)=\theta_{3,4}(\tau)$}{theta\_1,4 = theta\_3,4}} 
Notice that
\begin{align}
	\theta_{1,4}(\tau)&=\sum_{\substack{x_i\in \mathbb{Z} \\ x_i\equiv1(4)}} q^{x_1^2+x_2^2+x_3^2+x_4^2}=  \sum_{\substack{x_i\in \mathbb{Z}\\x_i\equiv3(4)}}q^{(-x_1)^2+(-x_2)^2+(-x_3)^2+(-x_4)^2} =  \theta_{3,4}(\tau),\\
	&=q^4+4q^{12}+6q^{20}+8q^{28}+13q^{36}+O(q^{44}).
\end{align}
Moreover, 
\begin{equation}
	\theta_{1,2}(\tau)= \left(\sum_{m\in \Z}q^{(4m+1)^2}+q^{(4m+3)^2}\right)^4=\left(2\sum_{m\in \Z}q^{(4m+1)^2}\right)^4=16\theta_{1,4}(\tau).
\end{equation}	
We have an immediate result for $r_{1,4}(n)=r_{3,4}(n)$ as follows, by \zcref{prop:r12}. 
\begin{proposition}
	For any $n \in \N\cup \{0\}$, 
	\begin{equation}
		r_{1,4}(n)=r_{3,4}(n)=\begin{cases}
			\sigma_1\left(\frac{n}{4}\right), & \text{if }n\equiv 4\pmod 8, \\
			0,                                & \text{otherwise.}
		\end{cases}
	\end{equation}

\end{proposition}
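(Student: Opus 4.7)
The plan is to deduce both equalities from identities the text has already established just above the proposition. First, the identity $\theta_{1,4}(\tau)=\theta_{3,4}(\tau)$ is immediate from the change of variables $x_i\mapsto -x_i$, which maps the lattice points with $x_i\equiv 1\pmod 4$ bijectively onto those with $x_i\equiv 3\pmod 4$ while preserving the value of $x_1^2+x_2^2+x_3^2+x_4^2$. So it suffices to compute $r_{1,4}(n)$.

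Next, I would exploit the factorisation
\begin{equation}
\theta_{1,2}(\tau)=\biggl(\sum_{m\in\Z}q^{(4m+1)^2}+q^{(4m+3)^2}\biggr)^{\!4}
=\biggl(2\sum_{m\in\Z}q^{(4m+1)^2}\biggr)^{\!4}=16\,\theta_{1,4}(\tau),
\end{equation}
where the middle equality uses $(4m+3)^2=(4(-m-1)+1)^2$, so the two inner sums are equal. Comparing Fourier coefficients on both sides gives
\begin{equation}
r_{1,2}(n)=16\,r_{1,4}(n)\quad\text{for all }n\in\N\cup\{0\}.
\end{equation}

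Invoking \zcref{prop:r12}, which states that $r_{1,2}(n)=16\sigma_1(n/4)$ when $n\equiv 4\pmod 8$ and vanishes otherwise, and dividing by $16$ yields the stated formula for $r_{1,4}(n)$, hence for $r_{3,4}(n)$ as well.

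There is essentially no obstacle: the entire argument is already assembled in the paragraph preceding the statement, and the only care needed is to verify the reindexing $(4m+3)^2=(4(-m-1)+1)^2$ and to note that the congruence $n\equiv 4\pmod 8$ is the only surviving case from \zcref{prop:r12}. No modular-forms machinery is required beyond the identification $\theta_{1,2}=16\,\theta_{1,4}$.
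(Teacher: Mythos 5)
Your argument is exactly the paper's: it establishes $\theta_{1,4}=\theta_{3,4}$ by the sign change $x_i\mapsto -x_i$, derives $\theta_{1,2}=16\,\theta_{1,4}$ from the same reindexing of $(4m+3)^2$ as $(4(-m-1)+1)^2$, and then divides the formula of \zcref{prop:r12} by $16$. The proposal is correct and matches the paper's proof step for step.
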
 
\subsection{Analysis of \texorpdfstring{$\theta_{0,4}$}{theta\_0,4} and \texorpdfstring{$\theta_{2,4}$}{theta\_2,4}}

\begin{lemma} For the congruent theta series $\theta_{k,4}(\tau)$ for $k=0,2$, we have the following decomposition. 
	\begin{align}
		\theta_{2,4}(\tau)&=16\EE {64}{11}(\tau)=-\frac{2}{3}\widehat{E}_2|S_{2,1}|V_{16}(\tau).\\
		\theta_{0,4}(\tau)&=\EE{64}{1}(\tau)+8\EE{64}{11}(\tau)= \widehat{E}_2|(2V_2-S_{2,0})|V_{16}(\tau)-\frac{1}{3}\widehat{E}_2|S_{2,1}|V_{16}(\tau).
	\end{align}

\end{lemma}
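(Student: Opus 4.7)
The plan is to exploit the elementary substitutions $x_i = 4y_i$ (for $x_i \equiv 0 \pmod 4$) and $x_i = 2(2y_i+1)$ (for $x_i \equiv 2 \pmod 4$) to reduce the level-$4$ decompositions to applications of the operator $V_4$ on the level-$2$ results already obtained in \zcref{sec:theta2}. These substitutions yield
\begin{equation*}
\theta_{0,4}(\tau) = \theta_{0,2}(4\tau) = \theta_{0,2}|V_4(\tau), \qquad \theta_{2,4}(\tau) = \theta_{1,2}(4\tau) = \theta_{1,2}|V_4(\tau).
\end{equation*}
By \zcref{thm:thetamod}, both series lie in $\mathcal{M}_2(\Gamma_0(64)\cap\Gamma_1(4))$, which is consistent with the level being raised from $16$ to $64$ by $V_4$.

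Applying $V_4$ to \zcref{eq:theta02} and \zcref{eq:theta12} gives
\begin{equation*}
\theta_{0,4} = \EE{16}{1}|V_4 + 8\,\EE{16}{5}|V_4, \qquad \theta_{2,4} = 16\,\EE{16}{5}|V_4.
\end{equation*}
To rewrite these in the level-$64$ normalized basis, I would match $\EE{16}{1}|V_4$ and $\EE{16}{5}|V_4$ against the entries of \zcref{app:evenbasis}. Tracing through the construction, $\EE{16}{1}$ is the normalized representative of $\E|(2V_2 - S_{2,0})|V_4$ and $\EE{16}{5}$ is the normalized representative of $-\tfrac{1}{24}\,\E|S_{2,1}|V_4$. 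Composing with $V_4$ and using $V_4\circ V_4 = V_{16}$, these lift to $\E|(2V_2 - S_{2,0})|V_{16}$ and $-\tfrac{1}{24}\,\E|S_{2,1}|V_{16}$, which are precisely the basis elements $\EE{64}{1}$ and $\EE{64}{11}$ of \zcref{app:evenbasis}. Substituting, for instance,
\begin{equation*}
\theta_{2,4} = 16\,\EE{64}{11} = 16\cdot\bigl(-\tfrac{1}{24}\bigr)\,\E|S_{2,1}|V_{16} = -\tfrac{2}{3}\,\E|S_{2,1}|V_{16},
\end{equation*}
and analogously $8\,\EE{64}{11} = -\tfrac{1}{3}\,\E|S_{2,1}|V_{16}$ delivers the claimed form for $\theta_{0,4}$.

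Since $\theta_{0,2}$ and $\theta_{1,2}$ have vanishing cuspidal components in \zcref{sec:theta2}, the operator $V_4$ (which sends Eisenstein series to Eisenstein series) preserves purity, so no cusp-form correction is required for $\theta_{0,4}$ or $\theta_{2,4}$. The principal obstacle is purely notational: carefully tracking the normalizing scalars (such as the $-\tfrac{1}{24}$ above) as one passes between the operator notation $\E|S_{M,m}|V_d$ and the indexed basis $\EE{N}{m}$. Should any ambiguity arise, evaluating at a single non-infinity cusp via \zcref{eq:ESV} confirms the exact linear combination.
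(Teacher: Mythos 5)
Your proposal is correct and follows essentially the same route as the paper: the paper likewise observes $\theta_{2,4}(\tau)=\theta_{1,2}(4\tau)$ and $\theta_{0,4}(\tau)=\theta_{0,2}(4\tau)$ and then substitutes into \zcref{eq:theta12} and \zcref{eq:theta02}, with the passage from $\EE{16}{1},\EE{16}{5}$ to $\EE{64}{1},\EE{64}{11}$ handled exactly as you describe. Your normalization bookkeeping ($16\cdot(-\tfrac{1}{24})=-\tfrac{2}{3}$, $8\cdot(-\tfrac{1}{24})=-\tfrac{1}{3}$) matches the stated identities.
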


\begin{proof}
	Observe that
	\begin{equation}
		\theta_{1,2}(4\tau)=\left(\sum_{n \in \Z}q^{4\cdot (2n+1)^2}\right)^4=\left(\sum_{n \in \Z}q^{ (4n+2)^2}\right)^4=\theta_{2,4}(\tau).
	\end{equation}
	From \zcref{eq:theta12}, we can get
	\begin{equation}
		\theta_{2,4}(\tau)=16 \EE{64}{11}=-\frac{2}{3}\widehat{E}_2|S_{2,1}|V_{16}(\tau). 
	\end{equation}
	Further notice that
	\begin{equation}
		\theta_{0,2}(4\tau)=\left(\sum_{n \in \Z}q^{4\cdot (2n)^2}\right)^4=\left(\sum_{n \in \Z}q^{ (4n)^2}\right)^4=\theta_{0,4}(\tau).
	\end{equation}
	Therefore, from \zcref{eq:theta02}, we get that
	\begin{equation}
		\theta_{0,4}(\tau)=\EE{64}{1}(\tau)+8\EE{64}{11}(\tau).
	\end{equation}


	
	
	
\end{proof}
Looking at the decomposition of the congruent theta series $\theta_{2,4}(\tau)$ and $\theta_{0,4}(\tau)$, we get the following proposition. 

\begin{proposition} For any $n \in \N\cup \{0\}$,
	\begin{equation}
		r_{2,4}(n)=\begin{cases}
			16 \sigma_1\left(\frac{n}{16}\right), & \text{if } n\equiv 16\pmod{32}, \\
			0,                                    & \text{otherwise.}
		\end{cases}    
	\end{equation}
	
	\begin{equation}
		r_{0,4}(n)=\begin{cases}
			1,                                                                                    & \textnormal{if } n=1,                  \\
			24\left[\sigma_1\left(\frac{n}{16}\right)-2\sigma_1\left(\frac{n}{32}\right) \right], & \textnormal{if } n\equiv0\pmod{32},    \\
			8\sigma_1\left(\frac{n}{16}\right),                                                   & \textnormal{if } n \equiv 16\pmod{32}, \\
			0,                                                                                    & \textnormal{otherwise.}
		\end{cases}
	\end{equation}

\end{proposition}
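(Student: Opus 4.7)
The starting point is the lemma immediately above, which provides explicit Eisenstein decompositions $\theta_{2,4} = -\frac{2}{3}\widehat{E}_2|S_{2,1}|V_{16}$ and $\theta_{0,4} = \widehat{E}_2|(2V_2 - S_{2,0})|V_{16} - \frac{1}{3}\widehat{E}_2|S_{2,1}|V_{16}$, with no cusp form component. The proof of the proposition therefore reduces to expanding these Eisenstein elements as Fourier series and reading off coefficients, sorted by residue class modulo $32$.

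For $\theta_{2,4}$, I start with the Fourier expansion $\widehat{E}_2 = 1 - 24\sum_{n \geq 1}\sigma_1(n)q^n$. The sieving operator $S_{2,1}$ restricts the sum to odd $n$, producing $-24\sum_{n \text{ odd}}\sigma_1(n)q^n$, and the operator $V_{16}$ sends $q^n \mapsto q^{16n}$. Multiplying by $-\frac{2}{3}$ shows that the coefficient of $q^N$ in $\theta_{2,4}$ equals $16\sigma_1(N/16)$ precisely when $N = 16n$ with $n$ odd (equivalently $N \equiv 16 \pmod{32}$) and vanishes otherwise, which is exactly the stated formula for $r_{2,4}(n)$.

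For $\theta_{0,4}$, I first expand $\widehat{E}_2|(2V_2 - S_{2,0})$. The operator $V_2$ yields $\widehat{E}_2|V_2 = 1 - 24\sum_{m \geq 1}\sigma_1(m)q^{2m}$, while $S_{2,0}$ gives $\widehat{E}_2|S_{2,0} = 1 - 24\sum_{m \geq 1}\sigma_1(2m)q^{2m}$. Subtraction produces $1 + \sum_{m \geq 1}[24\sigma_1(2m) - 48\sigma_1(m)]q^{2m}$, and the subsequent $V_{16}$ substitutes $q^{2m} \mapsto q^{32m}$. Combining this with $-\frac{1}{3}\widehat{E}_2|S_{2,1}|V_{16} = 8\sum_{n \text{ odd}}\sigma_1(n)q^{16n}$ and sorting the total $q^N$-coefficient by residue class of $N$ modulo $32$ yields the four cases: constant term $1$ at $N = 0$; coefficient $8\sigma_1(N/16)$ when $N \equiv 16 \pmod{32}$; coefficient $24\sigma_1(N/16) - 48\sigma_1(N/32) = 24[\sigma_1(N/16) - 2\sigma_1(N/32)]$ when $N \equiv 0 \pmod{32}$ with $N > 0$; and $0$ for every other residue.

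The proof presents no genuine obstacle: with the lemma having already secured the purely Eisenstein decomposition, the remaining work is bookkeeping on residue classes. The only mildly delicate step is tracking the two competing constant terms contributed by $V_2$ and $S_{2,0}$ inside $\widehat{E}_2|(2V_2 - S_{2,0})$ so that exactly a single $1$ survives; this is precisely why the combination $2V_2 - S_{2,0}$ was engineered, since it yields a holomorphic object whose constant term matches the single representation $(0,0,0,0)$ of zero and whose remaining Fourier support sits on even indices, perfectly aligning with the $\equiv 0 \pmod{32}$ part of $\theta_{0,4}$ after twisting by $V_{16}$.
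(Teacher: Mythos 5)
Your proof is correct and follows essentially the same route as the paper: the paper derives this proposition by simply reading off Fourier coefficients from the Eisenstein decompositions in the preceding lemma, and your residue-class bookkeeping (including the cancellation of constant and non-holomorphic terms in $2V_2-S_{2,0}$) fills in exactly the computation the paper leaves implicit. The only discrepancy is that your correctly derived constant term sits at $n=0$, whereas the statement, like the analogous propositions elsewhere in the paper, writes that case as $n=1$.
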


\section{Theta series \texorpdfstring{$\theta_{k,3}(\tau)$}{theta\_k,3 (tau)}}\label{sec:theta3}
In this section, we work the congruent quadratic form modulo 3. We want to investigate about the integers represented by $Q_{k,3}$. From \zcref{thm:thetamod}, we get
\begin{equation}
	\theta_{k,3}\in \mathcal{M}_2(\Gamma_0(36)\cap \Gamma_1(3))
	= \bigoplus_{\chi(-1) = 1 }\mathcal{M}_2(\Gamma_0(36),\chi)
	=\mathcal{M}_2(\Gamma_0(36)). 
\end{equation}
Now, $\dim(\mathcal{M}_2(\Gamma_0(36)))=12 $, $ \dim({\mathcal{S}_2(\Gamma_0(36))})=1$. Therefore, $ \dim(\mathcal{E}_2(\Gamma_0(36)))=11$. In order to a generate basis for the Eisenstein series space, we list all the elements of  $\mathcal{E}_2(\Gamma_0(36))$ with the help of $S$ and $V$ operators, starting with the element with element with constant term (here the term $c=-\frac{1}{24}$ is for normalization).	
\begin{gather}
	B_1\coloneqq \widehat{E}_2|(2V_2-S_{2,0})|V_9(\tau)=2\widehat{E}_2|V_{18}-\widehat{E}_2|S_{2,0}|V_9(\tau),\\
	\begin{alignedat}{3}
		B_2 &\coloneqq c\widehat{E}_2|S_{6,1}(\tau),\hspace{30pt} &
		B_3 &\coloneqq c\widehat{E}_2|S_{6,5}(\tau), & & \\
		B_4 &\coloneqq c\widehat{E}_2|S_{3,1} (\tau),\hspace{30pt} &
		B_5 &\coloneqq c\widehat{E}_2|S_{3,1}|V_2 (\tau),\hspace{30pt} &
		B_6 &\coloneqq c\widehat{E}_2|S_{3,1}|V_4(\tau),\\
		B_7 &\coloneqq c\widehat{E}_2|S_{3,2} (\tau),\hspace{30pt} &
		B_8 &\coloneqq c\widehat{E}_2|S_{3,2}|V_2(\tau), \hspace{30pt} &
		B_9 &\coloneqq c\widehat{E}_2|S_{3,2}|V_4(\tau),\\
		B_{10} &\coloneqq c\widehat{E}_2|S_{2,1} (\tau),\hspace{30pt} &
		B_{11} &\coloneqq c\widehat{E}_2|S_{2,1}|V_3(\tau), \hspace{30pt} &
		B_{12} &\coloneqq c\widehat{E}_2|S_{2,1}|V_9(\tau).
	\end{alignedat}
\end{gather}

As $\dim({\mathcal{E}_2(\Gamma_0(36))})=11$, some elements of the above list must be linearly dependent. And we can easily check that,
\begin{equation}\label{eq:linrel}
	B_2+B_3+4B_{11}-3B_{12}-B_{9}=0.
\end{equation}
So, we will take $\mathcal{E}_2(\Gamma_0(36))=\C[ B_1,\dots,B_{11}]$. The (non-equivalent) cusps of $\Gamma_0(36)$ are as follows: $0$, $\frac{1}{18}$, $\frac{1}{12}$, $\frac{1}{9}$, $\frac{1}{6}$, $\frac{1}{4}$, $\frac{1}{3}$, $\frac{5}{12}$, $\frac{1}{2}$, $\frac{2}{3}$, $\frac{5}{6}$, $\infty$. Using \zcref{eq:ES,eq:ESV,eq:EV}, we can get the values at cusps for all the basis elements of $\mathcal{E}_2(\Gamma_0(36))$. A table of the cusp values can be found in \ref{app:gamma036table}. Having set the background, we will now deal with $\theta_{k,3}(\tau)$ for different values of $k$.
\subsection{Analysis of \texorpdfstring{$\theta_{0,3}$}{theta\_0,3}} 
Notice that, 
\begin{equation}
	\theta_3(18\tau)^4=\left(\sum_{n \in \Z}q^{18\frac{n^2}{2}}\right)^4=\left(\sum_{n \in \Z}q^{(3n)^2}\right)^4=\theta_{0,3}(\tau).
\end{equation}
Recall, $\theta_3(2\tau)^4= \EE{4}{1}(\tau)+8\EE{4}{2}(\tau)$, and hence 
\begin{align}\label{eq:theta03}
	\theta_{0,3}(\tau)
	&=(\EE{4}{1}+8\EE{4}{2})|V_9(\tau)
	=2\widehat{E}_2|V_{18}(\tau)-\widehat{E}_2|S_{2,0}|V_9(\tau)-\frac{1}{3}\widehat{E}_2|S_{2,1}|V_9(\tau).
\end{align}
As a result, we get the following proposition.
\begin{proposition}\label{prop:r03}
	For any $n \in \N\cup \{0\}$,  
	\begin{equation}
		r_{0,3}(n)=\begin{cases}
			1,                                                                                   & \textnormal{if } n=1,                 \\
			24\left[\sigma_1\left(\frac{n}{9}\right)-2\sigma_1\left(\frac{n}{18}\right) \right], & \textnormal{if } n\equiv0\pmod{18},   \\
			8\sigma_1\left(\frac{n}{9}\right),                                                   & \textnormal{if } n \equiv 9\pmod{18}, \\
			0,                                                                                   & \textnormal{otherwise.}
		\end{cases}
	\end{equation}
	
\end{proposition}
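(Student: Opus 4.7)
The proof plan is to read $q$-expansion coefficients directly off the Eisenstein decomposition
\[
\theta_{0,3}(\tau) = 2\widehat E_2|V_{18}(\tau) - \widehat E_2|S_{2,0}|V_9(\tau) - \tfrac{1}{3}\widehat E_2|S_{2,1}|V_9(\tau)
\]
established on the line immediately preceding the statement. This decomposition arises by applying $V_9$ to the classical identity $\theta_3(2\tau)^4 = \EE 4 1 + 8\EE 4 2$ of forms on $\Gamma_0(4)$; since $\mathcal S_2(\Gamma_0(4)) = 0$, no cusp-form component is hidden in $\theta_{0,3}$, so the Fourier coefficients of the right-hand side give $r_{0,3}(n)$ outright.

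First I would expand each summand using $\widehat E_2(\tau) = 1 - \tfrac{3}{\pi\,\mathrm{Im}(\tau)} - 24\sum_{m\ge 1}\sigma_1(m)q^m$. The holomorphic parts are
\[
\widehat E_2|V_{18}(\tau) = 1 - 24\sum_{m\ge 1}\sigma_1(m)\,q^{18m}, \qquad \widehat E_2|S_{2,0}|V_9(\tau) = 1 - 24\sum_{m\ge 1}\sigma_1(2m)\,q^{18m},
\]
\[
\widehat E_2|S_{2,1}|V_9(\tau) = -24\sum_{\substack{m\ge 1 \\ m\text{ odd}}}\sigma_1(m)\,q^{9m}.
\]
As a sanity check, I would verify that the three non-holomorphic $(\pi\,\mathrm{Im}\,\tau)^{-1}$-contributions cancel: the first gives $-\tfrac{1}{3\pi\,\mathrm{Im}\,\tau}$, the second $+\tfrac{1}{3\pi\,\mathrm{Im}\,\tau}$, and the third vanishes because $1\not\equiv 0\pmod 2$ kills the non-holomorphic part of $\widehat E_2|S_{2,1}$ via the finite-sum formula from \zcref{prop:finsum}.

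Next, I would split by the residue of $n$ modulo $18$. If $n \equiv 9\pmod{18}$, only the third summand contributes, giving $r_{0,3}(n) = 8\,\sigma_1(n/9)$. If $n \equiv 0\pmod{18}$, only the first two summands contribute, giving $r_{0,3}(n) = -48\,\sigma_1(n/18) + 24\,\sigma_1(n/9) = 24\bigl[\sigma_1(n/9) - 2\sigma_1(n/18)\bigr]$. For every other $n\ge 1$ the supports of all three pieces miss $q^n$, so $r_{0,3}(n) = 0$. The constant term $2-1 = 1$ recovers the value $r_{0,3}(0) = 1$, matching the first clause of the case distribution.

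I do not expect any real obstacle: the difficult step was already packaged into the Eisenstein decomposition above, and what remains is elementary bookkeeping of $q$-coefficients by residue class modulo $18$, exactly the template used in the analogous propositions for $\theta_{0,2}$, $\theta_{1,2}$, $\theta_{0,4}$, and $\theta_{2,4}$.
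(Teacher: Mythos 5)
Your proof is correct and follows essentially the same route as the paper: the paper simply records the decomposition $\theta_{0,3}=(\EE{4}{1}+8\EE{4}{2})|V_9 = 2\widehat{E}_2|V_{18}-\widehat{E}_2|S_{2,0}|V_9-\tfrac{1}{3}\widehat{E}_2|S_{2,1}|V_9$ and asserts the proposition ``as a result,'' while you carry out the coefficient bookkeeping (including the cancellation of the non-holomorphic parts) explicitly and correctly. The only point of divergence is harmless: your constant-term clause attaches the value $1$ to $n=0$ (the zero vector), whereas the proposition as printed says $n=1$; this is evidently a typo in the paper, repeated in the analogous propositions for $r_{0,2}$ and $r_{0,4}$, which your argument implicitly corrects.
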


\subsection{Analysis of \texorpdfstring{$\theta_{1,3}$}{theta\_1,3}}

This is quite a striking case. In this case, we will encounter that $\theta_{1,3}(\tau)$ has a cusp form part. We first take a look at the Fourier series of $\theta_{1,3}(\tau)$ itself.
\begin{equation}
	\theta_{1,3}(\tau)=q^4+4q^7+6q^{10}+4q^{13}+q^{16}+4q^{19}+12q^{22}+12q^{25}+\cdots 
\end{equation}
\begin{remark}
	Clearly, $r_{1,3}(n)=0$ unless $n\equiv 1,4\pmod{6}$. 
\end{remark}

We have the values of the basis elements of $\mathcal{E}_2(\Gamma_0(36))$ at the cusps of $\Gamma_0(36)$ in \ref{app:gamma036table}. We take the following linear combination at a cusp $c$. 
\begin{equation}
	\theta_{1,3}(\tau)=\sum_{i=1}^{11}\ell_iB_i.
\end{equation}
Solving the system of linear equations, we get the following solution set:
\begin{equation}
	\ell_2=\frac{1}{3}, \ell_6=-2, \ell_8=1.
\end{equation}
Assume we have the following Eisenstein series and cusp form decomposition,
\begin{equation}
	\theta_{1,3}(\tau)=E_{1,3}(\tau)+f_{1,3}(\tau).
\end{equation}
Then we will get the Eisenstein series component can be expressed as follows. 
\begin{equation}
	E_{1,3}=-\frac{1}{24}\left(\frac{1}{3}\widehat{E}_2|S_{6,1}-2\widehat{E}_2|S_{3,1}|V_4+\widehat{E}_2|S_{3,2}|V_2\right)    .
\end{equation}
Thereafter, we extract the cusp form part, which is
\begin{align}
	f_{1,3}(\tau)&=\theta_{1,3}(\tau)-E_{1,3}(\tau)\\
	\label{eq:cusp}&=-\frac{1}{3}(q-4q^7+2q^{13}+8q^{19}-5q^{25}-4q^{31}-10q^{37}+8q^{43}+9q^{49}+\cdots)   
\end{align} 
Moreover, 
\begin{equation}
	f_{1,3}(\tau) \in \mathcal{S}_2(\Gamma_0(36)).
\end{equation}
It is known that $\dim(\mathcal{S}_2(\Gamma_0(36)))=1$ and is generated by the normalized Hecke eigenform $\eta(6\tau)^4$, where $\eta(\tau)$ is the \emph{Dedekind eta function}. Finally, comparing the Fourier series, we get that,

\begin{equation}
	-3f_{1,3}(\tau)=\eta(6\tau)^4.
\end{equation}

\par
Summing up all the discussions above, we get the following proposition. 
\begin{proposition}\label{prop:propforECPC}
	For $n \in \N\cup \{0\}$,       
	\begin{equation}
		r_{1,3}(n)=
		\begin{cases}
			\sigma_1\left(\frac{n}{2}\right)-2\sigma_1\left(\frac{n}{4}\right), & \textnormal{if } n\equiv4\pmod{12},  \\
			\sigma_1\left(\frac{n}{2}\right)                                    & \textnormal{if } n\equiv10\pmod{12}, \\
			\frac{1}{3}\sigma_1(n)-\frac{1}{3}a(n),                             & \textnormal{if } n\equiv1\pmod{6},   \\
			0,                                                                  & \textnormal{otherwise,}
		\end{cases}
	\end{equation}
	where $\eta(6\tau)^4=\sum_{n\geq 1}a(n)q^n$.
\end{proposition}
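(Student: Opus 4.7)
The plan is to unpack the decomposition $\theta_{1,3}(\tau) = E_{1,3}(\tau) + f_{1,3}(\tau)$ that was already established in the discussion preceding the proposition, and then read off the Fourier coefficients of each piece. Concretely, I would first use the definition of the sieving operator together with the formula $\widehat{E}_2(\tau) = 1 - \tfrac{3}{\pi\operatorname{Im}(\tau)} - 24\sum_{n\geq 1}\sigma_1(n)q^n$ to write, for $m \not\equiv 0 \pmod M$,
\begin{equation}
\widehat{E}_2|S_{M,m}|V_d(\tau) = -24\sum_{\substack{n\geq 1 \\ n\equiv m\,(M)}} \sigma_1(n)\,q^{dn}.
\end{equation}
Applying this to the three basis elements $\widehat{E}_2|S_{6,1}$, $\widehat{E}_2|S_{3,1}|V_4$, and $\widehat{E}_2|S_{3,2}|V_2$ appearing in $E_{1,3}$ converts the identity for $E_{1,3}$ into an explicit $q$-series whose coefficients are divisor sums supported on prescribed residue classes.

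Next I would change summation variables ($m = 4n$ in the second sum, $m = 2n$ in the third) so that every sum is expressed as a power series in $q^m$. This reorganizes $E_{1,3}$ as
\begin{equation}
E_{1,3}(\tau) = \tfrac{1}{3}\!\!\sum_{m\equiv 1\,(6)} \sigma_1(m)\,q^m \;-\; 2\!\!\sum_{m\equiv 4\,(12)} \sigma_1\!\left(\tfrac{m}{4}\right)q^m \;+\; \sum_{m\equiv 4\,(6)} \sigma_1\!\left(\tfrac{m}{2}\right)q^m,
\end{equation}
and now the contribution to the $n$-th coefficient can be determined purely by inspecting which residue class $n$ lies in. For $n\equiv 1\,(6)$ only the first sum contributes, giving $\tfrac{1}{3}\sigma_1(n)$. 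For $n\equiv 10\,(12)$, which lies in the class $4\,(6)$ but is not divisible by $4$, only the third sum contributes, giving $\sigma_1(n/2)$. For $n\equiv 4\,(12)$, both the second and third sums contribute, giving $\sigma_1(n/2) - 2\sigma_1(n/4)$. For any remaining residue class modulo $12$, all three sums vanish.

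Finally I would combine this with the cusp form part $f_{1,3} = -\tfrac{1}{3}\eta(6\tau)^4$: since $\eta(6\tau)^4 = q\prod_{k\geq 1}(1-q^{6k})^4$ has only exponents congruent to $1\pmod 6$, the contribution $-\tfrac{1}{3}a(n)$ appears exactly in the $n\equiv 1\,(6)$ case and vanishes otherwise, yielding the stated formula. As a consistency check, $Q_{1,3}(\vec{x}) \equiv 4 \equiv 1\pmod 3$, so $r_{1,3}(n)$ must vanish for $n\not\equiv 1\pmod 3$; the union of the residue classes $\{1,7,4,10\}\pmod{12}$ covered by the three nonzero cases is exactly $\{n : n\equiv 1\pmod 3\}$, and the remaining coefficients are indeed zero. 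The only real bookkeeping subtlety is tracking how the $V_2$ and $V_4$ shifts interact with the $\pmod 3$ congruences, producing the split of $n\equiv 4\pmod 6$ into the subcases $n\equiv 4,10\pmod{12}$; once that is handled, the rest is routine coefficient extraction.
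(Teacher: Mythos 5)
Your proposal is correct and follows essentially the same route as the paper: the paper's proof of this proposition is precisely the preceding derivation of the decomposition $\theta_{1,3}=E_{1,3}+f_{1,3}$ (via cusp values) followed by the coefficient extraction you carry out, with the cusp form identified as $-\tfrac{1}{3}\eta(6\tau)^4$ using $\dim\mathcal{S}_2(\Gamma_0(36))=1$. Your reindexing of the $V_2$ and $V_4$ shifts and the resulting split of $n\equiv 4\pmod 6$ into the classes $4,10\pmod{12}$ matches the stated formula exactly.
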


\begin{remark}
	Notice that, $\theta_{1,3}(\tau)=\theta_{2,3}(\tau)$. Therefore, for any $n \in \N$, $r_{1,3}(n)=r_{2,3}(n)$. 
\end{remark}

\section{Theta series \texorpdfstring{$\theta_{k,6}(\tau)$}{theta\_k,6 (tau)}}\label{sec:theta6}
In this section, we will deal with the congruent quadratic form modulo 6. We want to investigate the integers represented by $Q_{k,6}$. 
From \zcref{thm:thetamod},
\begin{equation}
	\theta_{k,6}\in \mathcal{M}_2(\Gamma_0(144)\cap \Gamma_1(6))= \bigoplus_{\chi(-1)=1 }\mathcal{M}_2(\Gamma_0(144),\chi) =\mathcal{M}_2(\Gamma_0(144)). 
\end{equation}

By using \zcref{eq:dimofEseries,eq:cuspcounting}, we get that $\dim(\mathcal{E}_2(\Gamma_0(144)))=23$. We list all the possible elements that span $\mathcal{E}_2(\Gamma_0(144))$ as follows.
\begin{gather}\allowdisplaybreaks[0]
	A_1\coloneqq{}\widehat{E}_2|(2V_2-S_{2,0})|V_{36}(\tau)=2\widehat{E}_2|V_{72}-\widehat{E}_2|S_{2,0}|V_{36}(\tau),\\[10pt]
	A_2\coloneqq{}c\widehat{E}_2|S_{12,1}(\tau),\hspace{15pt} A_3\coloneqq{}c\widehat{E}_2|S_{12,5}(\tau),\hspace{15pt} A_4\coloneqq{}c\widehat{E}_2|S_{12,7}(\tau),\hspace{15pt} A_5\coloneqq{}c\widehat{E}_2|S_{12,11}(\tau),\\[10pt]
	A_6\coloneqq{}c\widehat{E}_2|S_{6,1}(\tau),\hspace{15pt} A_7\coloneqq{}c\widehat{E}_2|S_{6,1}|V_2(\tau),\hspace{15pt} A_8\coloneqq{}c\widehat{E}_2|S_{6,1}|V_4(\tau),\\[10pt]
	A_9\coloneqq{}c\widehat{E}_2|S_{6,5}(\tau),\hspace{15pt} A_{10}\coloneqq{}c\widehat{E}_2|S_{6,5}|V_2(\tau),\hspace{15pt} A_{11}\coloneqq{}c\widehat{E}_2|S_{6,5}|V_4(\tau),\\[10pt]
	A_{12}\coloneqq{}c\widehat{E}_2|S_{3,1}(\tau),\hspace{15pt} A_{13}\coloneqq{}c\widehat{E}_2|S_{3,1}|V_2(\tau),\hspace{15pt} A_{14}\coloneqq{}c\widehat{E}_2|S_{3,1}|V_4(\tau),\\
	A_{15}\coloneqq{}c\widehat{E}_2|S_{3,1}|V_8(\tau),\hspace{15pt} A_{16}\coloneqq{}c\widehat{E}_2|S_{3,1}|V_{16}(\tau),\\[10pt]
	A_{17}\coloneqq{}c\widehat{E}_2|S_{3,2}(\tau),\hspace{15pt} A_{18}\coloneqq{}c\widehat{E}_2|S_{3,2}|V_2(\tau),\hspace{15pt} A_{19}\coloneqq{}c\widehat{E}_2|S_{3,2}|V_4(\tau),\\
	A_{20}\coloneqq{}c\widehat{E}_2|S_{3,2}|V_8(\tau),\hspace{15pt} A_{21}\coloneqq{}c\widehat{E}_2|S_{3,2}|V_{16}(\tau),\\[10pt]
	A_{22}\coloneqq{}c\widehat{E}_2|S_{2,1}(\tau),\hspace{15pt} A_{23}\coloneqq{}c\widehat{E}_2|S_{2,1}|V_2(\tau),\hspace{15pt} A_{24}\coloneqq{}c\widehat{E}_2|S_{2,1}|V_3(\tau),\\
	A_{25}\coloneqq{}c\widehat{E}_2|S_{2,1}|V_4(\tau),\hspace{15pt} A_{26}\coloneqq{}c\widehat{E}_2|S_{2,1}|V_{6}(\tau),\\[10pt]
	A_{27}\coloneqq{}c\widehat{E}_2|S_{2,1}|V_{9}(\tau),\hspace{15pt}
	A_{28}\coloneqq{}c\widehat{E}_2|S_{2,1}|V_{12}(\tau),\hspace{15pt}
	A_{29}\coloneqq{}c\widehat{E}_2|S_{2,1}|V_{18}(\tau), \\
	A_{30}\coloneqq{}c\widehat{E}_2|S_{2,1}|V_{36}(\tau).
\end{gather}
Using the \zcref{eq:ES,eq:ESV,eq:EV}, we can get the values at cusps (non-equivalent) for all the basis elements of $\mathcal{E}_2(\Gamma_0(144))$.
Now we start working on different congruent theta series $\theta_{k,6}$ for different values of $k$.

We first consider the theta series $\theta_{0,6}$, $\theta_{2,6}$, and $\theta_{3,6}$.  
\begin{proposition} 
	For $n \in \N\cup\{0\}$,  
	\begin{equation}
		r_{0,6}(n)=
		\begin{cases}
			1                                                                                    & \textnormal{if } n=1,                    \\
			24\left[\sigma_1\left(\frac{n}{36}\right)-2\sigma_1\left(\frac{n}{72}\right)\right], & \textnormal{if } n\equiv \textnormal{0\pmod{72}},\ n>1, \\
			8\sigma_1\left(\frac{n}{36}\right),                                                  & \textnormal{if } n\equiv \textnormal{36\pmod{72}},    \\
			0, \hspace{15pt}                                                                     & \textnormal{otherwise. }
		\end{cases}
	\end{equation}
	\begin{equation}
		r_{1,6}(n)=r_{5,6}(n)=
		\begin{cases}
			\frac{2}{3}\sigma_1\left(\frac{n}{4}\right)+\frac{1}{3}a_{\eta(6\tau)^4}\left(\frac{n}{4}\right), & \textnormal{if } n\equiv 4\pmod{24}, \\
			0,                                                                                                & \textnormal{otherwise. }
		\end{cases}
	\end{equation}
	\begin{equation}
		r_{2,6}(n)=r_{4,6}(n)=
		\begin{cases}
			\frac{1}{3}\sigma_1\left(\frac{n}{4}\right)-\frac{1}{3}a_{\eta(6\tau)^4}\left(\frac{n}{4}\right), & \textnormal{if } n\equiv 4\textnormal{ or 28\pmod{48}}, \\
			-\frac{4}{3}\sigma_1\left(\frac{n}{16}\right)+\frac{1}{3}\sigma_1\left(\frac{n}{4}\right),        & \textnormal{if }n\equiv \textnormal{16\pmod{48}},             \\
			\frac{1}{3}\sigma_1\left(\frac{n}{4}\right),                                                      & \textnormal{if }n\equiv \textnormal{40\pmod{48}},             \\
			0,                                                                                                & \textnormal{otherwise. }
		\end{cases}
	\end{equation}
	\begin{equation}
		r_{3,6}(n)=
		\begin{cases}
			16\sigma_1\left(\frac{n}{36}\right), & \textnormal{if } n \equiv \textnormal{36\pmod{72}}, \\
			0,                                   & \textnormal{otherwise.}
		\end{cases}
	\end{equation}
	where $a_{\eta(6\tau)^4}(k)$ is the $k^{th}$ coefficient of the $q$-expansion of the cusp form $\eta(6\tau)^4$. 
\end{proposition}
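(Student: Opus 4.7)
The plan is to split the proposition into four families indexed by $k \in \{0,1,2,3\}$ and handle them by two different mechanisms, following the template already used in Sections 4–6. First, observe the symmetry $\vec{x}\mapsto -\vec{x}$ maps solutions with $x_i\equiv k\pmod 6$ bijectively onto solutions with $x_i\equiv -k\pmod 6$, so $\theta_{1,6}=\theta_{5,6}$ and $\theta_{2,6}=\theta_{4,6}$. This reduces the proposition to the four stated formulas.

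For $k=0$ and $k=3$, the strategy is to reduce directly to the mod-$2$ cases already computed in Section \ref{sec:theta2}. Writing $x_i=3y_i$ in the defining sum shows $\theta_{0,6}(\tau)=\theta_{0,2}(9\tau)$ and $\theta_{3,6}(\tau)=\theta_{1,2}(9\tau)$. Substituting $n\mapsto n/9$ in the closed forms for $r_{0,2}$ and $r_{1,2}$ (Proposition \ref{prop:r12} and the preceding proposition) produces the announced formulas after tracking how the congruence conditions mod $8$ on $n/9$ transform into conditions mod $72$ on $n$.

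For $k=1$ and $k=2$, the plan is the full decomposition $\theta_{k,6}=E_{k,6}+f_{k,6}$ inside $\mathcal{M}_2(\Gamma_0(144))=\mathcal{E}_2(\Gamma_0(144))\oplus\mathcal{S}_2(\Gamma_0(144))$ guaranteed by Theorem \ref{thm:thetamod} and \eqref{eq:modulardecomposition}. The Eisenstein piece $E_{k,6}$ will be extracted by the same cusp-value matching used for $\theta_{1,3}$: compute the constant terms of $\theta_{k,6}$ at each non-equivalent cusp of $\Gamma_0(144)$ via $q$-expansions at the cusps, then solve the resulting linear system against the cusp values of the spanning set $A_1,\dots,A_{30}$ (with the known linear relations reducing to a basis of dimension $23$) given by \eqref{eq:ES}, \eqref{eq:ESV}, \eqref{eq:EV}. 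Reading off Fourier coefficients of the resulting combination yields the divisor-sum terms $\sigma_1(n/4)$ and $\sigma_1(n/16)$. For the cuspidal residue $f_{k,6}=\theta_{k,6}-E_{k,6}$, the appearance of $a_{\eta(6\tau)^4}(n/4)$ in the claim guides the identification: since $\eta(6\tau)^4\in\mathcal{S}_2(\Gamma_0(36))$ embeds as an oldform in $\mathcal{S}_2(\Gamma_0(144))$ via the inclusion and via $V_d$ for $d\mid 4$, the residue must lie in $\mathrm{Span}\{\eta(6\tau)^4,\eta(6\tau)^4|V_2,\eta(6\tau)^4|V_4\}$, and matching a handful of initial Fourier coefficients pins it down as a scalar multiple of $\eta(6\tau)^4|V_4$.

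The main obstacle is the bookkeeping in Step 3: the Eisenstein space at level $144$ is $23$-dimensional with a $30$-element spanning set, so the cusp-value linear system is large and the verification of linear dependences among the $A_i$ is tedious (analogous to \eqref{eq:linrel} but with many more relations). In practice this is most efficiently carried out with a computer algebra system as referenced in \cite{maple}. A secondary subtlety is confirming that $\mathcal{S}_2(\Gamma_0(144))$ contributes \emph{only} oldforms coming from $\eta(6\tau)^4$ to our particular congruent theta series — this needs a dimension check of the newform subspace and a comparison with enough Fourier coefficients of $\theta_{k,6}$ to exclude any newform contribution, after which the scalar identification becomes routine.
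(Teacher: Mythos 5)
Your proposal is correct and follows essentially the same strategy as the paper: reduce the cases $k=0,3$ to previously computed congruent theta series by a change of variables, and handle the remaining cases by an Eisenstein/cusp decomposition at level $144$ matched on cusp values. The one place where you take on substantially more work than the paper is $k=2$: the paper notes that $x_i\equiv 2\pmod 6$ forces $x_i=2t_i$ with $t_i\equiv 1\pmod 3$, whence $\theta_{2,6}(\tau)=\theta_{1,3}(4\tau)$, so the formula for $r_{2,6}$ --- including the cusp-form term $a_{\eta(6\tau)^4}(n/4)$ --- drops out of \zcref{prop:propforECPC} with no level-$144$ linear algebra; your cusp-matching route for $k=2$ would also succeed, but it carries exactly the burdens you flag (the $23$-dimensional system and the need to rule out newform contributions in $\mathcal{S}_2(\Gamma_0(144))$), all of which the substitution avoids. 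Similarly, for $k=0$ the paper reduces via $\theta_{0,6}(\tau)=\theta_{0,3}(4\tau)$ rather than your $\theta_{0,2}(9\tau)$; both identities are valid and yield the same mod-$72$ formula. Only $\theta_{1,6}$ genuinely requires the level-$144$ computation, and there the paper's answer, $\theta_{1,6}=\tfrac{2}{3}A_8+\tfrac{1}{3}\eta(6\tau)^4|V_4$, is exactly what your plan would produce.
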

\begin{proof}
	Notice that
	\begin{align}
		\theta_{0,6}(\tau)&=\sum_{\substack{\Vec{x}\in \Z^4 \\ x_i\equiv 0\pmod{6}}} q^{x_1^2+x_2^2+x_3^2+x_4^2}
		=\sum_{\Vec{y}\in \Z^4}q^{36y_1^2+36y_2^2+36y_3^2+36y_4^2}\\
		&=\sum_{\Vec{y}\in \Z^4}q^{4(9y_1^2+9y_2^2+9y_3^2+9y_4^2)}=\sum_{\substack{\Vec{t}\in \Z^4 \\ t_i\equiv 0\pmod{3}}} q^{4(t_1^2+t_2^2+t_3^2+t_4^2)}\\
		&=\theta_{0,3}(4\tau).
	\end{align}
	Similarly, we can show that 
	\begin{equation}
		\theta_{2,6}(\tau)=\theta_{1,3}(4\tau), \quad   \theta_{3,6}(\tau)=\theta_{1,2}(9\tau).
	\end{equation}
	The results follow from \zcref{prop:r03,prop:propforECPC,prop:r12} immediately. 
	
	For $\theta_{1,6}(\tau)$, notice the Fourier expansion is, 
	\begin{equation}
		\theta_{1,6}(\tau)=q^4+4q^{28}+10q^{52}+16q^{76}+19q^{100}+20q^{124}+22q^{148}+\cdots
	\end{equation}
	Clearly from the definition, $r_{1,6}(n)=0$ unless $n\equiv 4 \pmod{24}$. 
	Moreover, comparing the values at the cusps of $\Gamma_0(144)$, we get that,
	\begin{equation}
		\theta_{1,6}(\tau)=\frac{2}{3}A_8+\frac{1}{3}f|V_4(\tau),
	\end{equation}
	where $f(\tau)=\eta(6\tau)^4$. 
	Hence, we get the result. 
	
\end{proof}

\section{Relation between point counting on Elliptic curve and theta series}\label{sec:sectionforECPC}

In this section, we will discuss deeply about the results of $\theta_{1,3}(\tau)$ of \zcref{sec:theta3}. Recall, $\theta_{1,3}(\tau)$ has cusp form part $-\frac{1}{3}\eta(6\tau)^4$. Therefore, first we will need to analyze the cusp form $\eta(6\tau)^4$. Later in this section, we will show some more implications coming from our main results mentioned in the introduction section.

\subsection{The Elliptic Curve \texorpdfstring{$y^2=x^3+1$}{y\string^2 = x\string^3 + 1}.}

We begin by examining the geometric curve of genus 1 associated with our weight 2 cusp form $\eta(6\tau)^4$. The corresponding elliptic curve of $\eta(6\tau)^4$ has isogeny class of label 36.a in \cite{lmfdb}. Notably, the modular curve $X_0(36)$ has genus 1 and has a canonical model defined over $\Q$, which is $\Q$-isogeny to the elliptic curve $E:y^2=x^3+1$.   As $E$ is defined over $\Q$, we can take reductions over various primes $p$ and prime powers $p^k$ enabling point counting over finite fields $\F_p$ and $\F_{p^k}$. We will use the following properties of the elliptic curve $E$.   

\begin{proposition}{\cite{book:silverman}}\label{prop:ellipticcurveprop}
	For the elliptic curve $E:y^2=x^3+1$, we have the following,
	\begin{enumerate}[label=\textnormal{(\arabic*)}]
		\item $E(\Q)_{\text{tor}}\cong \Z/6\Z$,
		\item  $E$ has CM by $-3$,
		\item $L(E,s)=L(\eta(6\tau)^4,s)$.
	\end{enumerate}
\end{proposition}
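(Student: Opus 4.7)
The plan is to verify the three assertions separately, since each rests on different standard tools.

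For $(1)$, I would exhibit the six obvious rational points: the point at infinity $O$, together with $(-1,0)$, $(0,\pm 1)$, and $(2,\pm 3)$. A direct chord--tangent computation shows $P=(2,3)$ has order $6$: doubling via the tangent line $y=2x-1$ gives $2P=(0,1)$, and a secant computation gives $3P=(-1,0)$, so $6P=O$. To rule out extra torsion, I would use that for any prime $p$ of good reduction the reduction map $E(\Q)_{\text{tor}}\hookrightarrow E(\F_p)$ is injective on the prime-to-$p$ part. Direct counts give $|E(\F_5)|=6$ and $|E(\F_7)|=12$, so $|E(\Q)_{\text{tor}}|$ divides $\gcd(6,12)=6$, forcing $E(\Q)_{\text{tor}}\cong \Z/6\Z$.

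For $(2)$, I would exhibit the CM action explicitly. Fix $\omega=e^{2\pi i/3}$ and define $\p:E\to E$ by $(x,y)\mapsto(\omega x, y)$ together with $O\mapsto O$. Since $(\omega x)^3+1=x^3+1=y^2$, this is a well-defined morphism of the affine curve; a short check on the group law shows $\p$ is in fact a group endomorphism with $\p^2+\p+\mathrm{id}=0$. Thus $\operatorname{End}(E)\supseteq \Z[\omega]=\mathcal{O}_{\Q(\sqrt{-3})}$, and since the endomorphism ring of an elliptic curve in characteristic zero is at most an order in an imaginary quadratic field, equality must hold; so $E$ has CM by the maximal order of $\Q(\sqrt{-3})$.

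For $(3)$, the $L$-function identity is an instance of modularity. First I would compute the conductor of $E$: the minimal discriminant of $y^2=x^3+1$ is $-2^4\cdot 3^3$, and Tate's algorithm at the primes $2$ and $3$ yields conductor $N=36$. Next, since $\dim \mathcal{S}_2(\Gamma_0(36))=1$, the cusp form $\eta(6\tau)^4 \in \mathcal{S}_2(\Gamma_0(36))$ is forced to be the unique normalized newform of level $36$. Finally, I would match the Fourier coefficients $a_p(\eta(6\tau)^4)$ with the trace of Frobenius $p+1-|E(\F_p)|$ at enough good primes, and invoke either the modularity theorem for elliptic curves over $\Q$, or, more classically given the CM by $\Q(\sqrt{-3})$ from step $(2)$, the Hecke--Deuring--Shimura construction attaching a weight-$2$ newform to a CM elliptic curve via the Hecke character associated to $E$; this yields equality of Euler factors at every prime and hence of the $L$-functions.

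The principal obstacle is $(3)$, since it appeals either to the deep Wiles--Taylor--Breuil--Conrad--Diamond modularity theorem or to the CM machinery (identification of the Grössencharakter, verification that the associated theta series equals $\eta(6\tau)^4$). For this particular curve both paths are classical and the identification is recorded explicitly in the LMFDB isogeny class \texttt{36.a}, so the matching of coefficients is routine once the conductor is pinned down.
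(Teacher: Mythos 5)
Your sketch is correct, but it is worth noting that the paper does not prove this proposition at all: it is stated as a known fact with a citation to Silverman's book, and the identification of $\eta(6\tau)^4$ with the isogeny class \texttt{36.a} is taken from the LMFDB. What you have done is reconstruct the standard arguments behind the citation, and they all check out. For (1), the chord--tangent computations are right ($2P=(0,1)$, $3P=(-1,0)$, so $P=(2,3)$ has exact order $6$), the curve has good reduction away from $2$ and $3$ since $\Delta=-2^4\cdot 3^3$, and the counts $|E(\F_5)|=6$, $|E(\F_7)|=12$ are correct; combining the two primes does eliminate any stray $5$- or $7$-torsion, so the slightly loose ``divides $\gcd(6,12)$'' phrasing is harmless (in fact for odd primes of good reduction over $\Q$ the full torsion injects, so $p=5$ alone suffices). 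For (2), the order-$3$ automorphism $(x,y)\mapsto(\omega x,y)$ fixes $O$, hence is an isogeny satisfying $\varphi^2+\varphi+1=0$, giving $\operatorname{End}(E)\supseteq\Z[\omega]$, the maximal order of discriminant $-3$. For (3), you correctly isolate the one genuinely deep input: either the modularity theorem together with the computation of the conductor $N=36$ and the fact that $\dim\mathcal S_2(\Gamma_0(36))=1$ (which the paper itself uses elsewhere), or the classical Deuring--Shimura correspondence via the Gr\"ossencharacter --- the latter being the route the paper implicitly leans on in its Section 7.3. Your write-up is therefore a legitimate, essentially self-contained justification of a statement the paper only cites; the trade-off is that it still outsources the hardest step of (3) to a black box, exactly as the paper does.
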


For being associated with the CM elliptic curve $E$, $\eta(6\tau)^4$ is also a \emph{CM cusp form} and as a consequence we get that, the $a_p$ coefficients of L-function of $\eta(6\tau)^4$ will be 0 when any prime $p$ is inert in $\Q(\sqrt{-3})$, that is, when $p\equiv 5\pmod{6}$.

\subsection{Proof of \textnormal{\zcref{thm:r13ecpc}}}
We will need the following lemmas and propositions for our proof. First we recall the following theorem to get a formula for point counting on elliptic curve $E$ over the finite field $\F_{p^k}$, denoted as $N_p^E(k)$.    
\begin{theorem}{(Weil)}\label{thm:weil}
	
	Let $a_p$ be the $p^{th}$ coefficient of the $L$-function of the elliptic curve $E$. 
	Let $\alpha$ and $\beta$ be the roots of the quadratic equation $x^2-a_px+p=0$. Then for any $ k \in \N$,we 
	have
	\begin{equation}
		N_p^E(k)=p^k+1-\alpha^k-\beta^k. 
	\end{equation}
	
\end{theorem}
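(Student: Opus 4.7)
The plan is to prove this by analyzing the action of the $p$-power Frobenius endomorphism $\phi_p : E \to E$. The key observation is that a point $P \in E(\overline{\F_p})$ lies in $E(\F_{p^k})$ if and only if $\phi_p^k(P) = P$, so
\begin{equation}
N_p^E(k) \;=\; \#E(\F_{p^k}) \;=\; \#\ker(\phi_p^k - 1).
\end{equation}
Since $1 - \phi_p^k$ is a separable isogeny (one can check that the derivative of $\phi_p^k - 1$ at $O$ is $-1$, which is nonzero), the size of its kernel equals its degree. Thus $N_p^E(k) = \deg(\phi_p^k - 1)$, which is the geometric heart of the statement.

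Second, I would invoke the standard fact that, for any prime $\ell \neq p$, Frobenius acts on the Tate module $T_\ell(E)$ with characteristic polynomial $x^2 - a_p x + p$, where $a_p$ coincides with the $p$-th Fourier coefficient of $L(E,s)$ by part (3) of \zcref{prop:ellipticcurveprop}. Writing $\alpha,\beta$ for the roots of this polynomial, we have the Vieta relations $\alpha+\beta = a_p$ and $\alpha\beta = p$. In particular the eigenvalues of $\phi_p^k$ on $T_\ell(E)$ are $\alpha^k$ and $\beta^k$, and their product is $p^k$.

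Third, I would use the compatibility between the degree of an endomorphism and the determinant of its action on $T_\ell(E)$ to obtain
\begin{equation}
\deg(\phi_p^k - 1) \;=\; \det\!\bigl(\phi_p^k - 1 \,\big|\, T_\ell(E)\bigr) \;=\; (1-\alpha^k)(1-\beta^k) \;=\; 1 + p^k - \alpha^k - \beta^k.
\end{equation}
Combining with the first step immediately gives $N_p^E(k) = p^k + 1 - \alpha^k - \beta^k$, which is the claimed formula. The base case $k=1$ specializes to the familiar Hasse formula $N_p^E(1) = p+1-a_p$, consistent with the definition of $a_p$.

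The main obstacle is not any single calculation but rather the background machinery: the formation of the Tate module, the equality $\deg(\psi) = \det(\psi \mid T_\ell(E))$ for $\psi \in \mathrm{End}(E)$, and the characteristic polynomial of Frobenius. Since this paper is applying the result rather than re-deriving it, my proposal is to give only the outline above and cite Silverman \cite{book:silverman} (Chapter V, where the zeta function of an elliptic curve over $\F_p$ is computed as $Z(E/\F_p,T) = \frac{(1-\alpha T)(1-\beta T)}{(1-T)(1-pT)}$) for the technical details. An alternative presentation, if one wants to avoid the Tate module altogether, is to establish the rationality of $Z(E/\F_p,T)$ first and then extract the point counts by taking the logarithmic derivative and matching coefficients, which yields the same identity.
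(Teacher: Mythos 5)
Your outline is correct: it is the standard proof of the Hasse--Weil point-count formula via the Frobenius endomorphism, the separability of $\phi_p^k-1$, and the compatibility $\deg(\psi)=\det(\psi\mid T_\ell(E))$, and the algebra $(1-\alpha^k)(1-\beta^k)=1+p^k-\alpha^k-\beta^k$ checks out. The paper itself offers no proof of this statement --- it is explicitly ``recalled'' as classical background (attributed to Weil, with the surrounding machinery cited to \cite{book:silverman}) --- so your proposal to give the outline and defer the Tate-module technicalities to Silverman is entirely consistent with, and slightly more informative than, what the paper does.
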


\begin{theorem}\label{thm:recursionap} 
	Let $a_n$ be the $n$-th Fourier coefficient of the cusp form $\eta(6\tau)^4$. Then we have the following three-term recursive relation for $a_{p^k}$ for primes $p$ and $k \in \N$:
	\begin{equation}
		a_{p^k}=a_pa_{p^{k-1}}-p \cdot a_{p^{k-2}}. 
	\end{equation}  
\end{theorem}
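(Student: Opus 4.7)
The plan is to derive the three-term recursion from the Euler product of $L(\eta(6\tau)^4, s)$, using the identification $L(E, s) = L(\eta(6\tau)^4, s)$ supplied by Proposition~\ref{prop:ellipticcurveprop}(3). Since $\dim \mathcal{S}_2(\Gamma_0(36)) = 1$, the form $\eta(6\tau)^4$ is automatically a normalized simultaneous Hecke eigenform, so its $L$-function admits an Euler product whose local factor at each prime $p$ reads
\begin{equation}
L_p(\eta(6\tau)^4, s) = \sum_{k \geq 0} a_{p^k}\, p^{-ks}.
\end{equation}

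For primes $p$ of good reduction for $E$, equivalently $p \nmid 36$, Theorem~\ref{thm:weil} rewrites the local factor of $E$ as
\begin{equation}
L_p(E, s) = \frac{1}{(1 - \alpha p^{-s})(1 - \beta p^{-s})} = \frac{1}{1 - a_p p^{-s} + p \cdot p^{-2s}},
\end{equation}
where $\alpha + \beta = a_p$ and $\alpha\beta = p$. Matching the two local expressions and clearing denominators gives
\begin{equation}
\Bigl(1 - a_p p^{-s} + p \cdot p^{-2s}\Bigr) \sum_{k \geq 0} a_{p^k}\, p^{-ks} = 1,
\end{equation}
and reading off the coefficient of $p^{-ks}$ for $k \geq 2$ immediately produces $a_{p^k} - a_p a_{p^{k-1}} + p\, a_{p^{k-2}} = 0$, which is the claimed recursion. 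The computation is a one-line coefficient comparison once the Euler product is in hand.

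The step that requires care, and the main obstacle to a fully uniform statement, is the behaviour at the two primes $p \in \{2, 3\}$ dividing the conductor of $E$. At these primes $\eta(6\tau)^4$ lies in the kernel of $U_p$, so $a_{p^k} = 0$ for all $k \geq 1$ and the local Euler factor is trivial; consequently the displayed recursion cannot be extracted from the Euler product, and in fact one checks that it fails at $k = 2$ (where $a_{p^2} = 0$ but $-p\, a_1 = -p$). The theorem should therefore be read with the tacit restriction $p \nmid 6$, which is the range in which it is actually invoked in the proofs of \zcref{thm:r13ecpc,thm:r13general}.
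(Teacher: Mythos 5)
Your proof is correct, and in fact the paper offers no proof of this theorem at all --- it is stated as a standard fact (the Hecke eigenform recursion, here packaged via the Euler product of $L(\eta(6\tau)^4,s)$), so your Euler-product derivation is exactly the argument the paper implicitly relies on. Your one-line coefficient comparison against the local factor $(1-a_pp^{-s}+p^{1-2s})^{-1}$, justified by $\dim\mathcal{S}_2(\Gamma_0(36))=1$ forcing $\eta(6\tau)^4$ to be a normalized eigenform, is complete and standard. Your caveat about the bad primes is a genuine and correct refinement that the paper glosses over: since $a_n=0$ unless $n\equiv 1\pmod 6$, one has $a_2=a_3=a_4=a_9=0$, so the stated recursion literally fails at $k=2$ for $p\in\{2,3\}$ (it would give $a_{p^2}=-p\neq 0$); the theorem must be read with the restriction $p\nmid 6$, which is harmless because every subsequent application (\zcref{thm:r13ecpc,thm:r13general}) only invokes it for primes $p>3$.
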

We prove the following expression of $a_{p^k}$ in terms of powers of $\a$ and $\b$.

\begin{lemma}\label{lem:recursionhelp1}
	We have the following sum formula for $a_{p^k}$,
	\begin{equation}
		a_{p^k}=\sum_{i=0}^k
		\a^i\b^{k-i}.    \end{equation}
\end{lemma}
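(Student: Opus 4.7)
The plan is to prove this by induction on $k$, using the recursive relation from \zcref{thm:recursionap} together with the basic identities $\alpha+\beta = a_p$ and $\alpha\beta = p$ that come from $\alpha, \beta$ being the roots of $x^2 - a_p x + p = 0$.

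First I would verify the base cases. For $k=0$, the right-hand side is $\alpha^0\beta^0 = 1 = a_1$. For $k=1$, the right-hand side is $\beta + \alpha = a_p$. Both match, so the formula holds initially. For the inductive step, assume the claim for $k-1$ and $k-2$, so that
\begin{equation}
a_{p^{k-1}} = \sum_{i=0}^{k-1}\alpha^i\beta^{k-1-i}, \qquad a_{p^{k-2}} = \sum_{i=0}^{k-2}\alpha^i\beta^{k-2-i}.
\end{equation}
Substituting into the recursion $a_{p^k} = a_p\, a_{p^{k-1}} - p\, a_{p^{k-2}}$ and replacing $a_p$ by $\alpha+\beta$ and $p$ by $\alpha\beta$, the $a_p\,a_{p^{k-1}}$ term splits into $\sum_{i=0}^{k-1}\alpha^{i+1}\beta^{k-1-i} + \sum_{i=0}^{k-1}\alpha^i\beta^{k-i}$, while $p\,a_{p^{k-2}}$ becomes $\sum_{i=0}^{k-2}\alpha^{i+1}\beta^{k-1-i}$. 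After shifting indices and performing the telescoping cancellation, the only survivors are the extreme terms $\alpha^k$ and $\beta^k$ together with the middle sum $\sum_{i=1}^{k-1}\alpha^i\beta^{k-i}$, which together give exactly $\sum_{i=0}^{k}\alpha^i\beta^{k-i}$.

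A conceptually cleaner alternative, which I would mention but not necessarily use, is the generating function argument: since the recurrence has characteristic polynomial $x^2 - a_p x + p$, one has
\begin{equation}
\sum_{k \geq 0} a_{p^k}\, t^k = \frac{1}{1 - a_p t + pt^2} = \frac{1}{(1-\alpha t)(1-\beta t)} = \Bigl(\sum_{i\geq 0}\alpha^i t^i\Bigr)\Bigl(\sum_{j\geq 0}\beta^j t^j\Bigr),
\end{equation}
and reading off the coefficient of $t^k$ gives the identity directly. No step here is a real obstacle; the only thing to be careful about is bookkeeping in the index shifts when combining the two sums in the inductive computation, and ensuring that the telescoping is carried out correctly so that no terms are double-counted or dropped.
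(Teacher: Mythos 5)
Your proof is correct and follows essentially the same route as the paper: induction on $k$ using the three-term recursion $a_{p^k}=a_p a_{p^{k-1}}-p\,a_{p^{k-2}}$ together with $\alpha+\beta=a_p$ and $\alpha\beta=p$, then combining the shifted sums. The only cosmetic difference is your choice of base cases ($k=0,1$ versus the paper's $k=1,2$) and your optional generating-function remark, neither of which changes the substance.
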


\begin{proof}
	We will use induction on $k$. For $k=1$, \zcref{thm:weil} gives us $a_p=\a+\b$.
	Next, for $k=2$, using the recursion relation in \zcref{thm:recursionap}, we get that
	\begin{equation}
		a_{p^2}=a_p^2-pa_1=(\a+\b)^2-\a\b=\a^2+\a\b+\b^2.
	\end{equation}

	Now, assume the formula holds for all $k$ up to $k=m$. Then, for $k=m$ and $k=m-1$, we will have,
	\begin{equation}\label{eq:inductionapm1}
		a_{p^{m-1}}=\sum_{i=0}^{m-1} \a^i\b^{m-i-1}, 
	\end{equation}	
	\begin{equation}\label{eq:inductionapm}
		a_{p^m}=\sum_{i=0}^m \a^i\b^{m-i}.
	\end{equation}
	Finally, for $k=m+1$, using \zcref{thm:recursionap} and \zcref{eq:inductionapm,eq:inductionapm1},  and we will have
	\begin{align}
		a_{p^{m+1}}&=a_pa_{p^m}-p\cdot a_{p^{m-1}}\\
		&=a_p\sum_{i=0}^m \a^i\b^{m-i}-p\sum_{i=0}^{m-1} \a^i\b^{m-i-1}\\
		&=(\a+\b)\cdot \sum_{i=0}^m \a^i\b^{m-i}-\a\b\cdot\sum_{i=0}^{m-1} \a^i\b^{m-i-1}\\
		&=\sum_{i=0}^{m+1} \a^i\b^{m-i+1}.
	\end{align}
\end{proof}

As an immediate corollary and the fact 
\begin{equation}
	\sigma_1(p^k)-p\sigma_1(p^{k-2})=p^k+1,
\end{equation}
we have the following relation which leads to a three-term relation for $r_{1,3}(p^k)$. 
\begin{corollary}\label{cor:recursionhelp2}
	\begin{equation}
		N_p(k)=\sigma_1(p^k)-p\sigma_1(p^{k-2})-(\a^k+\b^k).
	\end{equation}
\end{corollary}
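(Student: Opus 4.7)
The plan is to simply combine \zcref{thm:weil} with the elementary divisor-sum identity stated just above the corollary. First, I would invoke Weil's theorem to rewrite
\begin{equation}
N_p(k) = p^k + 1 - \a^k - \b^k.
\end{equation}
Then I would substitute the identity $\sigma_1(p^k) - p\sigma_1(p^{k-2}) = p^k + 1$ (which is stated in the excerpt immediately before the corollary) in place of the $p^k + 1$ term, yielding the desired formula.

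To justify the $\sigma_1$ identity in case a reader wants it, I would note that $\sigma_1(p^k) = 1 + p + p^2 + \cdots + p^k$, so $p\sigma_1(p^{k-2}) = p + p^2 + \cdots + p^{k-1}$, and subtracting gives exactly $1 + p^k$. This identity is valid for all $k \geq 2$; for $k = 0, 1$ one takes $\sigma_1(p^{-2}) = \sigma_1(p^{-1}) = 0$ by convention, and the formula continues to hold ($\sigma_1(1) = 1 = p^0 + 1 - 0$ for $k=0$, and $\sigma_1(p) = p + 1 = p^1 + 1 - 0$ for $k=1$).

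There is no real obstacle here: the content of the corollary is nothing more than repackaging the Weil count $N_p(k) = p^k + 1 - (\a^k + \b^k)$ using the divisor-sum identity, and its purpose is clearly to set up a recursion for $r_{1,3}(p^k)$ matching the three-term relation in \zcref{thm:r13ecpc}. The only care needed is to flag that $\sigma_1(p^{k-2})$ must be interpreted as $0$ when $k \in \{0,1\}$, so that the formula is consistent with the $k=1,2$ case of \zcref{eq:equationforrk}.
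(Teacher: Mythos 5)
Your proof is correct and is exactly the paper's (implicit) argument: the corollary is stated as an immediate consequence of \zcref{thm:weil} combined with the identity $\sigma_1(p^k)-p\sigma_1(p^{k-2})=p^k+1$ given just before it. Your added remark about interpreting $\sigma_1(p^{k-2})$ as $0$ for $k\in\{0,1\}$ is a reasonable clarification but does not change the substance.
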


%
%

\begin{proof}[Proof of \zcref{thm:r13ecpc}]
	Using parts 1 and 3 of \zcref{prop:ellipticcurveprop} and \zcref{prop:propforECPC}, for $p\equiv 1\pmod{6}$, we get,
	\begin{equation}
		r_{1,3}(p)=\frac{1}{3}(\sigma_1(p)-a_p)=\frac{1}{3}(p+1-a_p)=\frac{1}{3} N_{p}(1).
	\end{equation}
	
	We are going to use induction on $k$. The case for $k=1,2$ is evident from \zcref{lem:recursionhelp1,cor:recursionhelp2}.	Now, assume that \zcref{eq:equationforrk} is true for $k=m$, that is,
	\begin{equation}
		3r_{1,3}(p^m)=N(m)+3p\cdot r_{1,3}(p^{m-2}).
	\end{equation}
	Then for $k=m+2$,
	\begin{align}
		3r_{1,3}(p^{m+2})&=\sigma_1(p^{m+2})-a(p^{m+2})\\
		&=N(m+2)+p\sigma_1(p^m)+(\a^{m+2}+\b^{m+2})-a(p^{m+2})\\
		&=N(m+2)+p\sigma_1(p^m)-p\sum_{i=1}^{m}\a^i\b^{m-i}\\
		&=N(m+2)+3p\cdot r_{1,3}(p^m). 
	\end{align}
	
	Using this formula successively, we get the extended form.
	Also, notice that for primes $p\equiv 5\pmod{6}$, $p^{2k+2}\equiv 1 \pmod{6}$. So, the same result will follow. 
\end{proof}

\begin{remark}
	In \cite{CHO2018999}, some examples give rise to linear relations between counts of representation of primes by a congruent quadratic form and point-counting on Elliptic curves over finite fields as shown in \zcref{tab:nonlineartable}. Note that these relations, unlike previous cases, has other terms too.

	\begin{table}[!ht]
		\begin{center}
			\scalebox{0.84}{
				\begin{tabular}{c c c c c c}
					\toprule
					\makecell{Quadratic \\form $Q$}& \makecell{Congruence \\ conditions} & \makecell{Elliptic \\ curve $E$} & \makecell{CM }&\makecell{Relation \\between\\ $N_p^E(1)$ and $r(p)$}  & \makecell{For primes\\ $p$  with \\congruence\\ conditions} \\
					\midrule
					$x_1^2+x_2^2+x_3^2+x_4^2$ &\makecell{ $x_2, x_3, x_4 \equiv 0\pmod{3}$,\\$x_1\equiv 1\pmod{3}$} & $y^2=x^3+1$ & $\Q(\sqrt{-3})$ &\makecell{$3r(p)=3p$\\$+3-2N_p^E(1)$} & \makecell{$p \equiv 1,7$\\
						$\pmod{12}$}\\[15pt]
					
					$x_1^2+x_2^2+2x_3^2+2x_4^2$&\makecell{ $x_2, x_3, x_4 \equiv 0\pmod{2}$,\\$x_1\equiv 1\pmod{2}$} & $y^2=x^3+4x$ & 
					$\Q(\sqrt{-1})$ &\makecell{$r(p)=2p$\\$+2-N_p^E(1)$} & $p \equiv 1\pmod{4}$\\[15pt]
					
					\makecell{$x_1^2+x_1x_2+x_2^2$\\$+x_3^2+x_3x_4+x_4^2$} &\makecell{ $x_2,x_3, x_4 \equiv 0\pmod{3}$,\\$x_1\equiv 1\pmod{3}$} & \makecell{$y^2+y=x^3$ }& {$\Q(\sqrt{-3})$}&\makecell{$3r(p)=3p$\\$+3-2N_p^E(1)$} & $p \equiv 1\pmod{3}$\\[15pt]

					\makecell{$x_1^2+x_1x_2+2x_2^2$\\$+x_3^2+x_3x_4+2x_4^2$} &\makecell{ $x_2,x_3, x_4 \equiv 0\pmod{2}$,\\$x_1\equiv 1\pmod{2}$} & \makecell{$y^2+xy+y$\\ $=x^3-x$}& {None}&\makecell{$3r(p)=6p$\\$+6-4N_p^E(1)$} & $p \equiv 1\pmod{2}$\\ \bottomrule
					
			\end{tabular}}
			\caption{Relations similar to \zcref{thm:r13ecpc}, where $r_Q(p)$ is the number of representations of a prime $p$ by the congruent quadratic form $(Q,(a_i,S_i))$ and $N_p^E(1)$ is $|\{(x,y)\in \F_p\mid (x,y) \text{ is on }E\}|+1$ (including the point at infinity) for the elliptic curve $E$. 
			}
			\label{tab:nonlineartable}
		\end{center}
	\end{table}
\end{remark}

\begin{example}
	For for $k=7$
	\begin{equation}
		a_{p^7}=\a^7+\b^7+pa_p(\a^2+\b^2)^2-p^3a_p.
	\end{equation}
	Hence,
	\begin{equation}
		\frac{1}{3}[N_p(7)+pN_p(5)+p^2N_p(3)+p^3N_p(1)]=\frac{1}{3}[\sigma_1(p^7)-a_{p^7}]=r_{1,3}(p^7).
	\end{equation} 
	Also, for  $k=8$
	\begin{equation}
		a_{p^8}=\a^8+\b^8+pa_p^2(\a^2+\b^2)(\a^2+\b^2-p)-p^3(a_p^2-\a^2-\b^2)+p^4.
	\end{equation}
	Therefore,
	\begin{equation}
		\frac{1}{3}[N_p(8)+pN_p(6)+p^2N_p(4)+p^3N_p(2)]= \frac{1}{3}[\sigma_1(p^8)-a_{p^8}]=r_{1,3}(p^8). 
	\end{equation}
\end{example}

Finally, we aim to prove the general relation in \zcref{thm:r13general}. We already know from the definition and \zcref{prop:propforECPC} that $r_{1,3}(n)=0$ if $n\not  \equiv 1\pmod{3}$. Up next we investigate how to use the prime decomposition of a natural number $n\equiv 1\pmod{6}$ to relate it to the prime counting $N_p(k)$ via  part 3 of \zcref{thm:r13ecpc}. Notice that, for two distinct primes $p,q$ we get that
\begin{equation}
	r_{1,3}(p)=\frac{1}{3}\left[ \sigma_1(p)-a_p\right] ,\hspace*{15pt} r_{1,3}(q)=\frac{1}{3}\left[ \sigma_1(q)-a_q\right].
\end{equation}
Clearly,
\begin{equation}
	r_{1,3}(pq)=\frac{1}{3}\left[ \sigma_1(pq)-a_{pq}\right]=\frac{1}{3}\left[ \sigma_1(p)\sigma_1(q)-a_pa_q\right]\neq r_{1,3}(p)r_{1,3}(q).
\end{equation} Therefore, $r_{1,3}(p)$ is not a multiplicative function.

\begin{proof}[Proof of \zcref{thm:r13general}].

	We will use induction on $k$ the number of distinct primes in the prime decomposition of $n$. The base case when number of distinct primes is obvious. Next, for $k=2$, observe that
	\begin{equation}
		a_{p_1^{\a_1}}r(p_2^{\a_2})+\sigma_1(p_2^{\a_2})r(p_1^{\a_1})=\frac{1}{3}\left[ \sigma_1(p_1^{\a_1}p_2^{\a_2})-a_{p_1^{\a_1}p_2^{\a_2}}\right]=r_{1,3}(p_1^{\a_1}p_2^{\a_2}) 
	\end{equation}
	Next, we assume that that the hypothesis is true for $k=m$, so that we get,
	\begin{equation}\label{eq:induction}
		r_{1,3}\left(\prod_{j=1}^mp_j^{\a_j}\right)=\sigma_1(p_2^{\a_2})\cdots\sigma_1(p_m^{\a_m})r_{1,3}(p_1^{\a_1})+\cdots+a_{p_1^{\a_1}}\dots a_{p_{m-1}^{\a_{m-1}}}r_{1,3}(p_m^{\a_m}). 
	\end{equation}
	Therefore, using \zcref{eq:induction} we get for $k=m+1$;
	\begin{align}
		&r_{1,3}\left(\prod_{i=1}^mp_i^{\a_i}\cdot p_{m+1 }^{\a_{m+1}}\right)\\
		&=\sigma_1(p_2^{\a_2})\cdots\sigma_1(p_m^{\a_m}p_{m+1}^{\a_{m+1}})r_{1,3}(p_1^{\a_1})+\cdots+a_{p_1^{\a_1}}\dots a_{p_{m-1}^{\a_{m-1}}}r_{1,3}(p_m^{\a_m}p_{m+1 }^{\a_{m+1}})\\
		&=\begin{multlined}[t]
			\sigma_1(p_2^{\a_2})\cdots\sigma_1(p_m^{\a_m})\sigma_1(p_{m+1 }^{\a_{m+1}})r_{1,3}(p_1^{\a_1})+\cdots \\
			\hspace{70pt} +a_{p_1^{\a_1}}\dots a_{p_{m-1}^{\a_{m-1}}}[\sigma_1(p_{m+1}^{\a_{m+1}})r_{1,3}(p_m^{\a_m})+a_{p_m^{\a_m}}r_{1,3}(p_{m+1}^{\a_{m+1}})]
		\end{multlined}\\
		&=\sum_{i=1}^{m+1}\left(\prod_{j=1}^{i-1}a_{p_j^{\a_j}}\cdot\prod_{l=i+1}^{m+1}\sigma_1(p_l^{\a_l})\right)r_{1,3}(p_i^{\a_i}). \qedhere
	\end{align}
\end{proof}
	
\subsection{Application}
In this subsection, we show a  consequence of \zcref{thm:r13ecpc}. 
\subsubsection{\texorpdfstring{$N_p(k)$}{N\_p (k)} and Gr\"{o}ssencharacter of higher exponents }\label{subsec:chsec}

In order to discuss the higher exponents of the Gr$\ddot{o}$ssencharacter $\chi$ associated with the CM cusp form $\eta(6\tau)^4$, first we need to define the character precisely. We can get an explicit description of the character $\chi$ of the id\`ele class group  of $\mathbb{Q}(\sqrt{-3})$ and of algebraic of type 2 such that
\begin{equation}
	L(\eta(6\tau)^4,s)=L(\chi,s-\frac{1}{2}).
\end{equation}

The imaginary quadratic field $\mathbb{Q}(\sqrt{-3})=\mathbb{Q}(\zeta_6)$ has class number 1 and discriminant -3, so 3 is the only ramified prime in $\mathbb{Q}(\zeta_6)$ and primes $p\equiv 2\pmod{3}$ are the inert ones. Let $S$ be the set of non-zero integral ideals of $\mathbb{Z}[\zeta_6]$ which are coprime to 6. Recall that, that every ideal of $S$ has a unique generator of the form $a+b\sqrt{-3}, a,b\in \mathbb{Z},a\equiv1\pmod{3},a\not\equiv b\pmod{2}$. [See  \cite{MR3848417}, Section 4 for more details]. The character is defined on id\`eles as,
\begin{equation}
	\tilde{\chi}(\mathcal{J})\coloneqq a-b\sqrt{-3}
\end{equation}
where $\mathcal{J}=(a+b\sqrt{-3})\in S$.

Let $v_2,v_3$ are the only places above 2 and 3 respectively, with the norm $Nv_2=4,Nv_3=3.$ Denote for $k=1,2$, $M_k$ be the maximal ideal at $v_j,$ and $U_k$ be the group of units at $v_k$. Denote $\chi_{v_k}$ the restriction of $\chi$ at the place $v_k$ of $\mathbb{Q}(\zeta_6)$.

Notice that,  the algebraic type 2 condition means that at the complex place $v_{\infty}$, $\chi_{v_{\infty}}(z)=\frac{z}{|z|}$ for all $z\in \mathbb{C}^{\times}$.

Moreover, $\chi_{v_3}$ is non-trivial on $U_3/(1+M_3)=\langle-1\rangle$ therefore, $\chi_{v_3}(-1)=-1$. On the other hand, $\zeta_3$ is a unit of $\mathbb{Q}(\zeta_6)$ of order 3, therefore, $\chi_{v_3}(\zeta_3)=1$.

Also, $U_2/(1+M_2)=\langle\zeta_3\rangle$ is a cyclic group of order 3. Therefore, we get that $\chi(\zeta_3)=1=\prod_{v}\chi_{v}(\zeta_3)=\chi_{v_2}(\zeta_3)\chi_{v_{\infty}}(\zeta_3)=\zeta_3\chi_{v_2}(\zeta_3)$. Therefore, we get $\chi_{v_2}(\zeta_3)=\zeta_3^{-1}=\zeta_3^{2}$.

Lastly, we look at the relation between the level of the modular form and the norm of the conductor of the corresponding Gr\"{o}ssencharacter. From \cite{book:silverman}, we get that for an elliptic curve $E/L$, with corresponding Gr\"{o}ssencharacter $\chi$ of conductor $c$, the completed $L$-function,
\begin{equation}
	\Lambda(E/L,s)=(N_{L/\mathbb{Q}}(D \cdot c))^{s/2}((2\pi)\Gamma(s))^{[L:\mathbb{Q}]}L(E/L,s),
\end{equation}
where $D$ is the different of $L/\mathbb{Q}$. From \cite{lmfdb}, we get that the completed $L$-function for the corresponding modular form $f$ of level $N$,

\begin{equation}
	\Lambda(f,s)=N^{s/2}\prod_{j=1}^J\Gamma_{\mathbb{R}}(s+\mu_j)\prod_{k=1}^K\Gamma_{\mathbb{C}}(s+v_k)L(f,s).
\end{equation}
As $\Lambda(f,s)=\Lambda(E/L,s)$, we get
\begin{equation}\label{eq:leveleq}
	N=N_{L/\mathbb{Q}}(D \cdot c)=|\Delta_L|N_{L/\mathbb{Q}}(c),
\end{equation} 
where $N$ is the level of $f$ and $\Delta_L$ is the discriminant of the field $L$. 
Now, $2$ splits in $\mathbb{Q}(\zeta_6)/\mathbb{Q}$, $N_{\mathbb{Q}(\zeta_6)/\mathbb{Q}}(2)=4$ while $3$ is ramified so there exists $\rho$ such that $3=\rho.\Bar{\rho}$ and $N_{\mathbb{Q}(\zeta_6)/\mathbb{Q}}(\rho)=3$.
We know that for the Gr$\ddot{o}$ssecharater $\chi=\chi_f\chi_{\infty}$, and the norm of the conductor of $\chi$ is actually the conductor of the Dirichlet character $\chi_f$ which has contributions from the conductors of both $\chi_{v_2}$ and $\chi_{v_3}$. For $\chi$ in our case, the conductor will be of the form $c=2\rho$ giving us $N_{L/\mathbb{Q}}(c)=N_{L/\mathbb{Q}}(2)N_{L/\mathbb{Q}}(\rho)=4\cdot 3=12$ which also matches \zcref{eq:leveleq}. The discriminant $\Delta_L=-3$ in our case.

Now, for any positive power $k$, let $f_k$ be the corresponding modular form of $\chi^k$. Then $f_k$ will have weight $k+1$. For the level, we will use \zcref{eq:leveleq}. We will get that when $k\equiv 0,2,4\pmod{6}$, $\chi_{v_3}^k$ vanishes while $\chi_{v_2}^k$ vanishes when $k\equiv 0,3\pmod{6}$. So we get
\begin{equation}
	N_{L/\mathbb{Q}}(c_{\chi^k})=\begin{cases}
		12, & \text{if } k\equiv 1,5\pmod{6}, \\
		4,  & \text{if } k\equiv 2,4\pmod{6}, \\
		3,  & \text{if } k\equiv 3\pmod{6},   \\
		1,  & \text{if } k\equiv 0\pmod{6},
	\end{cases}
\end{equation}
where $c_{\chi^k}$ is the conductor of the character $\chi^k$.
Putting this in \zcref{eq:leveleq} we can easily obtain the level of the modular form $f_{\chi^k}$ corresponding to the character $\chi^k$. Finally, we can get the actual Fourier expansion of $f_{\chi^k}$.

We denote $a_k(p)$ as the $a_p$ coefficients of $L(\chi^k,s)$. Then note that $a_k(p)=\a^k+\b^k$ where $\a,\b$ are as given in \zcref{thm:weil}. We can get the following three-term recursion relation of $a_k(p)$ similar to \zcref{thm:recursionap}. 
\begin{proposition}
	For any prime $p$,
	\begin{equation}
		a_k(p)=a_{k-1}(p)a_1(p)-pa_{k-2}(p).
	\end{equation}
\end{proposition}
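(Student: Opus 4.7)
The plan is to prove this recursion directly from the characterization $a_k(p) = \alpha^k + \beta^k$ noted immediately before the statement, where $\alpha, \beta$ are the roots of $x^2 - a_p x + p = 0$ furnished by \zcref{thm:weil}. Since $a_1(p) = \alpha + \beta$ by \zcref{lem:recursionhelp1} (the case $k=1$) and the roots satisfy $\alpha\beta = p$ by Vieta, everything is already in place; the recursion is a Newton-type identity for power sums.

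The key computation I would carry out is simply
\begin{equation}
a_{k-1}(p)\,a_1(p) = (\alpha^{k-1}+\beta^{k-1})(\alpha+\beta) = \alpha^k+\beta^k + \alpha\beta\,(\alpha^{k-2}+\beta^{k-2}) = a_k(p) + p\,a_{k-2}(p),
\end{equation}
and then solve for $a_k(p)$. This gives the claimed three-term relation with no case analysis, valid for every prime $p$ (and in particular independent of whether $p$ splits, is inert, or ramifies in $\Q(\sqrt{-3})$, since the definition $a_k(p) = \alpha^k + \beta^k$ is uniform).

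There is essentially no obstacle: the only thing to double-check is that the identification $a_k(p) = \alpha^k + \beta^k$ is consistent with the algebraic Hecke character description given just above the proposition, i.e.\ that the $p$-th coefficients of $L(\chi^k, s)$ really are the $k$-th power sums of the Frobenius eigenvalues at $p$. This follows because $\chi$ is an algebraic Hecke character of type $(1,0)$, so for a split prime $p = \pi\bar\pi$ of $\Q(\sqrt{-3})$ the Frobenius eigenvalues for $\chi$ are $\chi(\pi)$ and $\chi(\bar\pi)$, hence for $\chi^k$ they are their $k$-th powers, giving $a_k(p) = \chi(\pi)^k + \chi(\bar\pi)^k = \alpha^k + \beta^k$; inert and ramified primes are handled analogously using the local factors described in the preceding discussion. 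With this compatibility in hand, the recursion is immediate, and I would present the proof as the single display above followed by rearrangement.
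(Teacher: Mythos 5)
Your proof is correct and is essentially identical to the paper's: both establish the identity $\a^k+\b^k=(\a^{k-1}+\b^{k-1})(\a+\b)-\a\b(\a^{k-2}+\b^{k-2})$ and then read off the recursion using $a_k(p)=\a^k+\b^k$ and $\a\b=p$. The extra compatibility check you mention is a reasonable aside but not needed, since the paper defines $a_k(p)=\a^k+\b^k$ directly just before the proposition.
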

\begin{proof}
	Notice that, for $\a$ and $\b$ defined in \zcref{thm:weil}, we have
	\begin{equation}
		\a^k+\b^k=(\a^{k-1}+\b^{k-1})(\a+\b)-\a\b(\a^{k-2}+\b^{k-2}).
	\end{equation}
	The proof follows from realizing the terms in terms of $a_r(p)$. 
\end{proof}

\begin{remark}
	Notice that, \zcref{thm:weil} gives us,
	\begin{equation}\label{eq:nkak}
		N_p(k)=p^k+1-a_k(p).
	\end{equation}
	Hence, using \zcref{eq:nkak} and \zcref{thm:r13ecpc} we can relate $r_{1,3}(p^k)$ with coefficients of $L(\chi^k,s)$. 
\end{remark}

\section*{Declarations}


\begin{itemize}
\item Funding: The author is partially supported by NSF grants DMS-2302514 and DMS-2302531 and the 2024 summer research assistantship awarded by the Department of Mathematics at Louisiana State University.
\item Conflict of interest/Competing interests: Not applicable.
\item Ethics approval and consent to participate: Not applicable.
\item Consent for publication: Not applicable.
\item Data availability: Not applicable.
\item Materials availability: Not applicable.
\item Code availability: Not applicable
\item Author contribution: Not applicable.
\end{itemize}







\begin{appendices}

\section{Basis of \texorpdfstring{$\mathcal{E}_2(\Gamma_0(2^k))$}{the space of weight 2 Eisenstein series of level 2\string^k} for \texorpdfstring{$k\leq 7$}{k < 8}}\label{app:evenbasis}

We give a basis of $\mathcal{E}_2(\Gamma_0(2^n))$ for $n\leq 7$. Here we use $c=-\frac{1}{24}$ normalization. We follow the method described in the proof of \zcref{thm:basisgenerationthm}. In that process, we omit some elements as they are just the linear combination of previous ones. For example, when we consider $\widehat{E}_2|S_{4,1}, \widehat{E}_2|S_{4,3}$, we omit $\widehat{E}_2|S_{2,1}=\widehat{E}_2|S_{4,1}+\widehat{E}_2|S_{4,3}$. We start of with the elements and their fourier expansions.

\begin{newthm}{Basis of $\mathcal{E}_2(\Gamma_0(2))$}
	By \zcref{eq:exactdimension}, $\dim (\mathcal{E}_2(\Gamma_0(2)))=1$. The following is a basis set for the Eisenstein space, 
	\begin{equation}
		\mathbb{E}_{2,1}(\tau) \coloneqq \widehat{E}_2|(2V_2-S_{2,0})|V_{\frac{1}{2}}(\tau)=1+24q+24q^{2}+96q^{3}+24q^{4}+144q^{5}+\cdots
	\end{equation}
    Recall that, $E_{2,2}(\tau)=2\widehat{E}_2(2\tau)-\widehat{E}_2(\tau)$ is the widely used basis for $\mathcal{E}_2(\G_0(2))$ and it satisfies, 
    \begin{align}
        E_{2,2}(\tau)=\mathbb{E}_{2,1}(\tau).
    \end{align}
    
\end{newthm}

\begin{newthm}{Basis of $\mathcal{E}_2(\Gamma_0(2^2))$}
	By \zcref{eq:exactdimension}, $\dim (\mathcal{E}_2(\Gamma_0(2^2)))=2$. The following is a basis set for the Eisenstein space,
	\begin{align}
		\EE{4}{1}(\tau) &\coloneqq \widehat{E}_2|(2V_2-S_{2,0})(\tau)=1+24q^2+24q^{4}+96q^{6}+24q^{8}+144q^{10}+ \cdots\\
		\EE{4}{2}(\tau) &\coloneqq c\widehat{E}_2|S_{2,1}(\tau)=q+4q^3+6q^5+8q^7+13q^9+12q^{11}+\cdots
	\end{align}
	Recall, the Jacobian theta series $\theta_3(2\tau)^4$ and  $\theta_4(2\tau)^4$ from \zcref{eq:jacobitheta}, can be decomposed as
	\begin{equation}
		\theta_3(2\tau)^4= \EE{4}{1}(\tau)+8\EE{4}{2}(\tau), \quad  \theta_4(2\tau)^4= \EE{4}{1}(\tau)-8\EE{4}{2}(\tau).
	\end{equation}		
\end{newthm}	

\begin{newthm}{Basis of $\mathcal{E}_2(\Gamma_0(2^3))$}
	By \zcref{eq:exactdimension}, $\dim (\mathcal{E}_2(\Gamma_0(2^3)))=3$. A basis set for the Eisenstein space is given by
	\begin{align}
		\EE{8}{1}(\tau)\coloneqq& \widehat{E}_2|(2V_2-S_{2,0})|V_2(\tau)=1+24q^4+24q^{8}+96q^{12}+24q^{16}+144q^{20}+\cdots\\
		\EE{8}{2}(\tau)\coloneqq& c\widehat{E}_2|S_{2,1}(\tau)=q+4q^3+6q^5+8q^7+13q^9+12q^{11}+\cdots\\
		\EE{8}{3}(\tau)\coloneqq& c\widehat{E}_2|S_{2,1}|V_2(\tau)=q^2+4q^6+6q^{10}+8q^{14}+13q^{18}+12q^{22}+\cdots
	\end{align}		
\end{newthm}
	
\begin{newthm}{Basis of $\mathcal{E}_2(\Gamma_0(2^4))$}
	By \zcref{eq:exactdimension}, $\dim (\mathcal{E}_2(\Gamma_0(2^4)))=5$.  So, a normalized basis for the Eisenstein space is as follows,
	\begin{align}
		\EE{16}{1}(\tau)\coloneqq& \widehat{E}_2|(2V_2-S_{2,0})|V_4(\tau)=1+24q^8+24q^{16}+96q^{24}+24q^{32}+\cdots\\
		\EE{16}{2}(\tau)\coloneqq& c\widehat{E}_2|S_{4,1}(\tau)=q+6q^5+13q^9+14q^{13}+18q^{17}+32q^{21}+\cdots\\
		\EE{16}{3}(\tau)\coloneqq& \frac{c}{4} \widehat{E}_2|S_{4,3}(\tau)=q^3+2q^7+3q^{11}+6q^{15}+5q^{19}+6q^{23}+\cdots\\
		\EE{16}{4}(\tau)\coloneqq&c\widehat{E}_2|S_{2,1}|V_2(\tau)=q^2+4q^6+6q^{10}+8q^{14}+13q^{18}+12q^{22}+\cdots\\
		\EE{16}{5}(\tau)\coloneqq& c\widehat{E}_2|S_{2,1}|V_4(\tau)=q^{4}+4q^{12}+6q^{20}+8q^{28}+\cdots
	\end{align}
\end{newthm}

\begin{newthm}{Basis of $\mathcal{E}_2(\Gamma_0(2^5))$}
	By \zcref{eq:exactdimension}, $\dim (\mathcal{E}_2(\Gamma_0(2^5)))=7$. The following is a basis for Eisenstein space, 
	\begin{align}
		\EE{32}{1}(\tau)\coloneqq& \widehat{E}_2|(2V_2-S_{2,0})|V_8(\tau)=1+24q^{16}+24q^{32}+96q^{48}+\cdots\\
		\EE{32}{2}(\tau)\coloneqq& c\widehat{E}_2|S_{4,1}(\tau)=q+6q^5+13q^9+14q^{13}+18q^{17}+32q^{21}+\cdots\\
		\EE{32}{3}(\tau)\coloneqq& \frac{c}{4}\widehat{E}_2|S_{4,3}(\tau)=q^3+2q^7+3q^{11}+6q^{15}+5q^{19}+6q^{23}+\cdots\\
		\EE{32}{4}(\tau)\coloneqq& c\widehat{E}_2|S_{4,1}|V_2(\tau)=q^2+6q^{10}+13q^{18}+14q^{26}+18q^{34}+32q^{42}+\cdots\\
		\EE{32}{5}(\tau)\coloneqq& \frac{c}{4}\widehat{E}_2|S_{4,3}|V_2(\tau)=q^6+2q^{14}+3q^{22}+6q^{30}+5q^{38}+6q^{46}+\cdots\\
		\EE{32}{6}(\tau)\coloneqq& c\widehat{E}_2|S_{2,1}|V_4(\tau)=q^4+4q^{12}+6q^{20}+8q^{28}+\cdots\\
		\EE{32}{7}(\tau)\coloneqq& c\widehat{E}_2|S_{2,1}|V_8(\tau)=q^{8}+4q^{24}+6q^{40}+8q^{56}+\cdots
	\end{align}
\end{newthm}

\begin{newthm}{Basis of $\mathcal{E}_2(\Gamma_0(2^6))$}
	By \zcref{eq:exactdimension}, $\dim (\mathcal{E}_2(\Gamma_0(2^6)))=11$. A basis set for the Eisenstein space is given by
	\begin{align}
		\EE{64}{1}(\tau)\coloneqq& \widehat{E}_2|(2V_2-S_{2,0})|V_{16}(\tau)=1+24q^{32}+24q^{64}+96q^{96}+\cdots\\
		\EE{64}{2}(\tau)\coloneqq& c\widehat{E}_2|S_{8,1}(\tau)=q+13q^9+18q^{17}+31q^{25}+\cdots\\
		\EE{64}{3}(\tau)\coloneqq& \frac{c}{4}\widehat{E}_2|S_{8,3}(\tau)=q^3+3q^{11}+5q^{19}
		+10q^{27}+\cdots\\
		\EE{64}{4}(\tau)\coloneqq& \frac{c}{2}\widehat{E}_2|S_{8,5}(\tau)=3q^5+7q^{13}+16q^{21}+15q^{29}+19q^{37}+\cdots\\
		\EE{64}{5}(\tau)\coloneqq& \frac{c}{8}\widehat{E}_2|S_{8,7}(\tau)=q^7+3q^{15}+3q^{23}+4q^{31}+\cdots\\
		\EE{64}{6}(\tau)\coloneqq& \frac{c}{3}\widehat{E}_2|S_{4,1}|V_2(\tau)=q^2+6q^{10}+13q^{18}+14q^{26}+\cdots\\
		\EE{64}{7}(\tau)\coloneqq& \frac{c}{4}\widehat{E}_2|S_{4,3}|V_2(\tau)=q^6+2q^{14}+3q^{22}+6q^{30}+\cdots\\
		\EE{64}{8}(\tau)\coloneqq& \frac{c}{3}\widehat{E}_2|S_{4,1}|V_4(\tau)=q^4+6q^{20}+13q^{26}+14q^{52}+\cdots\\
		\EE{64}{9}(\tau)\coloneqq& \frac{c}{4}\widehat{E}_2|S_{4,3}|V_4(\tau)=q^{12}+2q^{28}+3q^{44}+6q^{60}+5q^{76}+\cdots\\
		\EE{64}{10}(\tau)\coloneqq& c\widehat{E}_2|S_{2,1}|V_8(\tau)=q^8+4q^{24}+6q^{40}+8q^{56}+\cdots\\
		\EE{64}{11}(\tau)\coloneqq& c\widehat{E}_2|S_{2,1}|V_{16}(\tau)=q^{16}+4q^{48}+6q^{80}+\cdots
	\end{align}
\end{newthm}

\begin{newthm}{Basis of $\mathcal{E}_2(\Gamma_0(2^7))$}		
	By \zcref{eq:exactdimension}, $\dim (\mathcal{E}_2(\Gamma_0(2^7)))=15$. The following is a basis set for the Eisenstein space, 
	\begin{align}
		\EE{128}{1}(\tau)\coloneqq& \widehat{E}_2|(2V_2-S_{2,0})|V_{32}(\tau)=1+24q^{6
		}+24q^{128}+96q^{192}+\cdots\\
		\EE{128}{2}(\tau)\coloneqq& c\widehat{E}_2|S_{8,1}(\tau)=q+13q^9+18q^{17}+31q^{25}+\cdots\\
		\EE{128}{3}(\tau)\coloneqq& \frac{c}{4}\widehat{E}_2|S_{8,3}(\tau)=q^3+3q^{11}+5q^{19}
		+10q^{27}+\cdots\\
		\EE{128}{4}(\tau)\coloneqq& \frac{c}{2}\widehat{E}_2|S_{8,5}(\tau)=3q^5+7q^{13}+16q^{21}+15q^{29}+19q^{37}+\cdots\\
		\EE{128}{5}(\tau)\coloneqq& \frac{c}{8}\widehat{E}_2|S_{8,7}(\tau)=q^7+3q^{15}+3q^{23}+4q^{31}+\cdots\\
		\EE{128}{6}(\tau)\coloneqq& c\widehat{E}_2|S_{8,1}|V_2(\tau)=q^2+13q^{18}+18q^{34}+31q^{50}+\cdots\\
		\EE{128}{7}(\tau)\coloneqq& \frac{c}{4}\widehat{E}_2|S_{8,3}|V_2(\tau)=q^6+3q^{22}+5q^{38}
		+10q^{54}+\cdots\\
		\EE{128}{8}(\tau)\coloneqq& \frac{c}{2}\widehat{E}_2|S_{8,5}|V_2(\tau)=3q^{10}+7q^{26}+16q^{42}+15q^{58}+19q^{74}+\cdots\\
		\EE{128}{9}(\tau)\coloneqq& \frac{c}{8}\widehat{E}_2|S_{8,7}|V_2(\tau)=q^{14}+3q^{30}+3q^{46}+4q^{62}+\cdots\\
		\EE{128}{10}(\tau)\coloneqq& \frac{c}{3}\widehat{E}_2|S_{4,1}|V_4(\tau)=q^4+6q^{20}+13q^{36}+14q^{52}+\cdots\\
		\EE{128}{11}(\tau)\coloneqq& \frac{c}{4}\widehat{E}_2|S_{4,3}|V_4(\tau)=q^{12}+2q^{28}+3q^{44}+6q^{60}+5q^{76}+\cdots\\
		\EE{128}{12}(\tau)\coloneqq& \frac{c}{3}\widehat{E}_2|S_{4,1}|V_8(\tau)=q^8+6q^{40}+13q^{72}+14q^{104}+\cdots\\
		\EE{128}{13}(\tau)\coloneqq& \frac{c}{4}\widehat{E}_2|S_{4,3}|V_8(\tau)=q^{24}+2q^{56}+3q^{88}+6q^{128}+5q^{152}+\cdots\\
		\EE{128}{14}(\tau)\coloneqq& c\widehat{E}_2|S_{2,1}|V_{16}(\tau)=q^{16}+4q^{48}+6q^{80}+\cdots\\
		\EE{128}{15}(\tau)\coloneqq& c\widehat{E}_2|S_{2,1}|V_{32}(\tau)=q^{32}+4q^{96}+6q^{160}+\cdots
	\end{align}
\end{newthm}

\section{Basis for \texorpdfstring{$\mathcal{E}_2(\Gamma_0(p^k))$}{the space of weight 2 Eisenstein series of level p\string^k} for odd prime \texorpdfstring{$p$}{p}}\label{app:oddbasis}

\begin{newthm}{Basis of $\mathcal{E}_2(\Gamma_0(p))$}
	By \zcref{eq:exactdimension}, $\dim (\mathcal{E}_2(\Gamma_0(p)))=1$. So we get the following element generating the whole Eisenstein space,
	\begin{equation}
		\EE p 1(\tau)\coloneqq \frac{1}{p-1}\widehat{E}_2|(pV_p-S_{p,0})|V_{\frac{1}{p}}(\tau)
	\end{equation}
\end{newthm}

\begin{newthm}{Basis of $\mathcal{E}_2(\Gamma_0(3^2))$}
	Followed by the discussion in \zcref{sec:basissection}, we can only get a complete set of basis of $\mathcal{E}_2(\Gamma_0(p^k))$ for $p=3$ when $k>1$. By \zcref{eq:exactdimension}, $\dim (\mathcal{E}_2(\Gamma_0(3^2)))=3$. So, a normalized basis for the Eisenstein space is as follows,
	\begin{align}
		\EE 9 1(\tau)\coloneqq& \frac{1}{2}\widehat{E}_2|(3V_3-S_{3,0})(\tau)\\
		\EE 9 2(\tau)\coloneqq& c \cdot \widehat{E}_2|S_{3,1}(\tau)\\
		\EE 9 3(\tau)\coloneqq&\frac{c}{3} \cdot\widehat{E}_2|S_{3,2}(\tau)
	\end{align}
\end{newthm}

\begin{newthm}{Basis of $\mathcal{E}_2(\Gamma_0(3^3))$}
	By \zcref{eq:exactdimension}, $\dim (\mathcal{E}_2(\Gamma_0(3^3)))=5$. So, a normalized basis for the Eisenstein space is as follows,
	\begin{align}
		\EE{27}{1}(\tau)\coloneqq& \frac{1}{2}\widehat{E}_2|(3V_3-S_{3,0})|V_3(\tau)\\
		\EE{27}{2}(\tau)\coloneqq& c \cdot \widehat{E}_2|S_{3,1}(\tau)\\
		\EE{27}{3}(\tau)\coloneqq& \frac{c}{3} \widehat{E}_2|S_{3,2}(\tau)\\
		\EE{27}{4}(\tau)\coloneqq& c \cdot \widehat{E}_2|S_{3,1}|V_3(\tau)\\
		\EE{27}{5}(\tau)\coloneqq& \frac{c}{3} \cdot \widehat{E}_2|S_{3,2}|V_3(\tau)
	\end{align}
\end{newthm}

\begin{remark*}
	For spaces $\mathcal{E}_2(\Gamma_0(3^k)), k>3$, we need to incorporate elements $\widehat{E}_2|S_{9,m}$ but as noted before, they will be no longer linearly independent. Therefore, we cannot get the whole basis just by using $S,V$ operators. 
\end{remark*}

\section{Table of cusp values for \texorpdfstring{$\Gamma_0(16)$}{Gamma\_0 (16)}}\label{app:gamma016table}

\begin{table}[!ht]
	\renewcommand{\arraystretch}{1.2}
	\begin{center}
		\scalebox{1.1}{
			\begin{tabular}{c c c c c c c}
				\toprule
				Cusps                &  $\frac{1}{8}$  &  $\frac{1}{4}$   &  $\frac{3}{4}$   &  $\frac{1}{2}$   &         0         & $\infty$ \\ \midrule
				values of $\EE{16}{1}(\tau)$ & $\frac{53}{48}$ & $\frac{53}{192}$ & $\frac{53}{192}$ & $\frac{53}{768}$ & $\frac{53}{3072}$ &    1     \\[5pt]
				values of $\EE{16}{2}(\tau)$ & $-\frac{1}{2}$  &  $-\frac{i}{8}$  &  $-\frac{i}{8}$  &  $\frac{1}{32}$  & $-\frac{1}{128}$  &    0     \\[5pt]
				values of $\EE{16}{3}(\tau)$ & $-\frac{1}{8}$  &  $\frac{i}{32}$  &  $\frac{i}{32}$  & $\frac{1}{128}$  & $-\frac{1}{512}$  &    0     \\[5pt]
				values of $\EE{16}{4}(\tau)$ & $-\frac{1}{4}$  &  $\frac{1}{16}$  &  $\frac{1}{16}$  & $-\frac{1}{64}$  & $-\frac{1}{256}$  &    0     \\[5pt]
				values of $\EE{16}{5}(\tau)$ & $\frac{1}{16}$  & $-\frac{1}{64}$  & $-\frac{1}{64}$  & $-\frac{1}{256}$ & $-\frac{1}{1024}$ &    0     \\ \bottomrule
			\end{tabular}
		}
		\caption{Values of basis elements of $\mathcal{E}_2(\G_0(16))$ at cusps.}
	\end{center}
\end{table}

\newpage
\section{Table of cusp values for \texorpdfstring{$\Gamma_0(36)$}{Gamma\_0 (36)}}\label{app:gamma036table}

\begin{table}[!ht]
	\renewcommand{\arraystretch}{1.2}
	\begin{center}
		\scalebox{0.875}{
			\begin{tabular}{c c c c c c c c c c c c c}
				\toprule
			      Cusps        & 0                   & $\frac{1}{18}$   & $\frac{1}{12}$           & $\frac{1}{9}$     & $\frac{1}{6}$             & $\frac{1}{4}$     & $\frac{1}{3}$             & $\frac{5}{12}$           & $\frac{1}{2}$      & $\frac{2}{3}$             & $\frac{5}{6}$             & $\infty$ \\ \midrule
			 values of $B_1$   & $\frac{101}{15552}$ & $\frac{101}{48}$ & $\frac{49}{216}$         & $\frac{101}{192}$ & $\frac{101}{432}$         & $\frac{49}{1944}$ & $\frac{101}{3888}$        & $\frac{49}{216}$         & $\frac{101}{3888}$ & $\frac{101}{1728}$        & $\frac{101}{432}$         & 1        \\[5pt]
			 values of $B_2$   & $-\frac{1}{216}$    & $0$              & 0                        & $0$               & $-\frac{1+i\sqrt{3}}{12}$ & $0$               & $\frac{1-i\sqrt{3}}{48}$  & $0$                      & $\frac{1}{54}$     & $\frac{1+i\sqrt{3}}{48}$  & -$\frac{1-i\sqrt{3}}{12}$ & 0        \\[5pt]
			 values of $B_3$   & $-\frac{1}{216}$    & $0$              & 0                        & $0$               & $-\frac{1-i\sqrt{3}}{12}$ & $0$               & $\frac{1+i\sqrt{3}}{48}$  & $0$                      & $\frac{1}{54}$     & $\frac{1-i\sqrt{3}}{48}$  & -$\frac{1+i\sqrt{3}}{12}$ & 0        \\[5pt]
			 values of $B_4$   & $-\frac{1}{81}$     & $0$              & $\frac{1-i\sqrt{3}}{18}$ & $0$               & $\frac{1+i\sqrt{3}}{18}$  & $-\frac{1}{81}$   & $\frac{1-i\sqrt{3}}{18}$  & $\frac{1+i\sqrt{3}}{18}$ & $-\frac{1}{81}$    & $\frac{1+i\sqrt{3}}{18}$  & $\frac{1-i\sqrt{3}}{18}$  & 0        \\[5pt]
			 values of $B_5$   & $-\frac{1}{324}$    & $0$              & $\frac{1+i\sqrt{3}}{18}$ & $0$               & $\frac{1-i\sqrt{3}}{18}$  & $-\frac{1}{81}$   & $\frac{1+i\sqrt{3}}{72}$  & $\frac{1-i\sqrt{3}}{18}$ & $-\frac{1}{81}$    & $\frac{1-i\sqrt{3}}{72}$  & $\frac{1+i\sqrt{3}}{18}$  & 0        \\[5pt]
			 values of $B_6$   & $-\frac{1}{1296}$   & $0$              & $\frac{1-i\sqrt{3}}{18}$ & $0$               & $\frac{1+i\sqrt{3}}{72}$  & $-\frac{1}{81}$   & $\frac{1-i\sqrt{3}}{288}$ & $\frac{1+i\sqrt{3}}{18}$ & $-\frac{1}{324}$   & $\frac{1+i\sqrt{3}}{288}$ & $\frac{1-i\sqrt{3}}{72}$  & 0        \\[5pt]
			 values of $B_7$   & $-\frac{1}{81}$     & $0$              & $\frac{1+i\sqrt{3}}{18}$ & $0$               & $\frac{1-i\sqrt{3}}{18}$  & $-\frac{1}{81}$   & $\frac{1+i\sqrt{3}}{18}$  & $\frac{1-i\sqrt{3}}{18}$ & $-\frac{1}{81}$    & $\frac{1-i\sqrt{3}}{18}$  & $\frac{1+i\sqrt{3}}{18}$  & 0        \\[5pt]
			 values of $B_8$   & $-\frac{1}{324}$    & $0$              & $\frac{1-i\sqrt{3}}{18}$ & $0$               & $\frac{1+i\sqrt{3}}{18}$  & $-\frac{1}{81}$   & $\frac{1-i\sqrt{3}}{72}$  & $\frac{1+i\sqrt{3}}{18}$ & $-\frac{1}{81}$    & $\frac{1+i\sqrt{3}}{72}$  & $\frac{1-i\sqrt{3}}{18}$  & 0        \\[5pt]
			 values of $B_9$   & $-\frac{1}{1296}$   & $0$              & $\frac{1+i\sqrt{3}}{18}$ & $0$               & $\frac{1-i\sqrt{3}}{72}$  & $-\frac{1}{81}$   & $\frac{1+i\sqrt{3}}{288}$ & $\frac{1-i\sqrt{3}}{18}$ & $-\frac{1}{324}$   & $\frac{1-i\sqrt{3}}{288}$ & $\frac{1+i\sqrt{3}}{72}$  & 0        \\[5pt]
				values of $B_{10}$ & $-\frac{1}{64}$     & $\frac{1}{16}$   & 0                        & $-\frac{1}{64}$   & $\frac{1}{16}$            & 0                 & $-\frac{1}{64}$           & 0                        & $\frac{1}{16}$     & $-\frac{1}{64}$           & $\frac{1}{16}$            & 0        \\[5pt]
				values of $B_{11}$ & $-\frac{1}{576}$    & $\frac{1}{16}$   & 0                        & $-\frac{1}{64}$   & $\frac{1}{16}$            & 0                 & $-\frac{1}{64}$           & 0                        & $\frac{1}{144}$    & $-\frac{1}{64}$           & $\frac{1}{16}$            & 0\\ \bottomrule
			\end{tabular}
		}
		\caption{Values of basis elements of $\mathcal{E}_2(\G_0(36))$ at cusps.}
	\end{center}
\end{table}




\end{appendices}


\bibliography{bib}


\begin{thebibliography}{20}
\ifx \bisbn   \undefined \def \bisbn  #1{ISBN #1}\fi
\ifx \binits  \undefined \def \binits#1{#1}\fi
\ifx \bauthor  \undefined \def \bauthor#1{#1}\fi
\ifx \batitle  \undefined \def \batitle#1{#1}\fi
\ifx \bjtitle  \undefined \def \bjtitle#1{#1}\fi
\ifx \bvolume  \undefined \def \bvolume#1{\textbf{#1}}\fi
\ifx \byear  \undefined \def \byear#1{#1}\fi
\ifx \bissue  \undefined \def \bissue#1{#1}\fi
\ifx \bfpage  \undefined \def \bfpage#1{#1}\fi
\ifx \blpage  \undefined \def \blpage #1{#1}\fi
\ifx \burl  \undefined \def \burl#1{\textsf{#1}}\fi
\ifx \doiurl  \undefined \def \doiurl#1{\url{https://doi.org/#1}}\fi
\ifx \betal  \undefined \def \betal{\textit{et al.}}\fi
\ifx \binstitute  \undefined \def \binstitute#1{#1}\fi
\ifx \binstitutionaled  \undefined \def \binstitutionaled#1{#1}\fi
\ifx \bctitle  \undefined \def \bctitle#1{#1}\fi
\ifx \beditor  \undefined \def \beditor#1{#1}\fi
\ifx \bpublisher  \undefined \def \bpublisher#1{#1}\fi
\ifx \bbtitle  \undefined \def \bbtitle#1{#1}\fi
\ifx \bedition  \undefined \def \bedition#1{#1}\fi
\ifx \bseriesno  \undefined \def \bseriesno#1{#1}\fi
\ifx \blocation  \undefined \def \blocation#1{#1}\fi
\ifx \bsertitle  \undefined \def \bsertitle#1{#1}\fi
\ifx \bsnm \undefined \def \bsnm#1{#1}\fi
\ifx \bsuffix \undefined \def \bsuffix#1{#1}\fi
\ifx \bparticle \undefined \def \bparticle#1{#1}\fi
\ifx \barticle \undefined \def \barticle#1{#1}\fi
\bibcommenthead
\ifx \bconfdate \undefined \def \bconfdate #1{#1}\fi
\ifx \botherref \undefined \def \botherref #1{#1}\fi
\ifx \url \undefined \def \url#1{\textsf{#1}}\fi
\ifx \bchapter \undefined \def \bchapter#1{#1}\fi
\ifx \bbook \undefined \def \bbook#1{#1}\fi
\ifx \bcomment \undefined \def \bcomment#1{#1}\fi
\ifx \oauthor \undefined \def \oauthor#1{#1}\fi
\ifx \citeauthoryear \undefined \def \citeauthoryear#1{#1}\fi
\ifx \endbibitem  \undefined \def \endbibitem {}\fi
\ifx \bconflocation  \undefined \def \bconflocation#1{#1}\fi
\ifx \arxivurl  \undefined \def \arxivurl#1{\textsf{#1}}\fi
\csname PreBibitemsHook\endcsname

\bibitem[\protect\citeauthoryear{Bhargava}{2000}]{Bhargava}
\begin{bchapter}
\bauthor{\bsnm{Bhargava}, \binits{M.}}:
\bctitle{On the {C}onway-{S}chneeberger fifteen theorem}.
In: \bbtitle{Quadratic Forms and Their Applications ({D}ublin, 1999)}.
\bsertitle{Contemp. Math.},
vol. \bseriesno{272},
pp. \bfpage{27}--\blpage{37}.
\bpublisher{Amer. Math. Soc.},
\blocation{Providence, RI}
(\byear{2000}).
\doiurl{10.1090/conm/272/04395} .
\burl{https://doi.org/10.1090/conm/272/04395}
\end{bchapter}
\endbibitem

\bibitem[\protect\citeauthoryear{Bhargava and Hanke}{2011}]{Bhargava-Hanke}
\begin{botherref}
\oauthor{\bsnm{Bhargava}, \binits{M.}},
\oauthor{\bsnm{Hanke}, \binits{J.}}:
Universal Quadratic Forms and the 290-Theorem.
Preprint
(2011)
\end{botherref}
\endbibitem

\bibitem[\protect\citeauthoryear{Beli}{2022}]{beli2022universalintegralquadraticforms}
\begin{botherref}
\oauthor{\bsnm{Beli}, \binits{C.N.}}:
Universal integral quadratic forms over dyadic local fields
(2022).
\url{https://arxiv.org/abs/2008.10113}
\end{botherref}
\endbibitem

\bibitem[\protect\citeauthoryear{Cox}{1989}]{book:Cox}
\begin{bbook}
\bauthor{\bsnm{Cox}, \binits{D.A.}}:
\bbtitle{Primes of the Form {$x^2 + Ny^2$}}.
\bsertitle{A Wiley-Interscience Publication},
p. \bfpage{351}.
\bpublisher{John Wiley \& Sons, Inc.},
\blocation{New York}
(\byear{1989}).
\bcomment{Fermat, class field theory and complex multiplication}
\end{bbook}
\endbibitem

\bibitem[\protect\citeauthoryear{Dickson}{1927}]{Dickson}
\begin{botherref}
\oauthor{\bsnm{Dickson}, \binits{L.E.}}:
Quaternary quadratic forms representing all integers.
Amer. J. Math, 49:39–56
(1927)
\end{botherref}
\endbibitem

\bibitem[\protect\citeauthoryear{Ramanujan}{1917}]{Ramanujan}
\begin{botherref}
\oauthor{\bsnm{Ramanujan}, \binits{S.}}:
On the expression of number in the form $ax^2+by^2+cz^2+du^2$.
Proceedings of the Cabridge Philosophical Society, XIX, 11-21
(1917)
\end{botherref}
\endbibitem

\bibitem[\protect\citeauthoryear{Zagier}{2008}]{book:Zagier123}
\begin{bchapter}
\bauthor{\bsnm{Zagier}, \binits{D.}}:
\bctitle{Elliptic modular forms and their applications}.
In: \beditor{\bsnm{Ranestad}, \binits{K.}} (ed.)
\bbtitle{The 1-2-3 of Modular Forms}.
\bsertitle{Universitext}.
\bpublisher{Springer},
\blocation{Berlin, Heidelberg}
(\byear{2008})
\end{bchapter}
\endbibitem

\bibitem[\protect\citeauthoryear{Sun}{2015}]{sun_universalsumspolygonalnumbers}
\begin{barticle}
\bauthor{\bsnm{Sun}, \binits{Z.-W.}}:
\batitle{On universal sums of polygonal numbers}.
\bjtitle{Sci. China Math.}
\bvolume{58}(\bissue{7}),
\bfpage{1367}--\blpage{1396}
(\byear{2015})
\doiurl{10.1007/s11425-015-4994-4}
\end{barticle}
\endbibitem

\bibitem[\protect\citeauthoryear{Schoeneberg}{1974}]{book:Schoeneberg}
\begin{bbook}
\bauthor{\bsnm{Schoeneberg}, \binits{B.}}:
\bbtitle{Elliptic Modular Functions: an Introduction}.
\bsertitle{Die Grundlehren der mathematischen Wissenschaften},
vol. \bseriesno{203},
p. \bfpage{233}.
\bpublisher{Springer},
\blocation{New York-Heidelberg}
(\byear{1974}).
\bcomment{Translated from the German by J. R. Smart and E. A. Schwandt}
\end{bbook}
\endbibitem

\bibitem[\protect\citeauthoryear{Cho}{2018}]{CHO2018999}
\begin{barticle}
\bauthor{\bsnm{Cho}, \binits{B.}}:
\batitle{On the number of representations of integers by quadratic forms with
  congruence conditions}.
\bjtitle{Journal of Mathematical Analysis and Applications}
\bvolume{462}(\bissue{1}),
\bfpage{999}--\blpage{1013}
(\byear{2018})
\doiurl{10.1016/j.jmaa.2017.12.060}
\end{barticle}
\endbibitem

\bibitem[\protect\citeauthoryear{Bringmann and Kane}{2022}]{MR4519810}
\begin{barticle}
\bauthor{\bsnm{Bringmann}, \binits{K.}},
\bauthor{\bsnm{Kane}, \binits{B.}}:
\batitle{Conjectures of {S}un about sums of polygonal numbers}.
\bjtitle{Matematica}
\bvolume{1}(\bissue{4}),
\bfpage{809}--\blpage{828}
(\byear{2022})
\doiurl{10.1007/s44007-022-00030-1}
\end{barticle}
\endbibitem

\bibitem[\protect\citeauthoryear{Amdeberhan et~al.}{2025}]{ono}
\begin{barticle}
\bauthor{\bsnm{Amdeberhan}, \binits{T.}},
\bauthor{\bsnm{Griffin}, \binits{M.}},
\bauthor{\bsnm{Ono}, \binits{K.}},
\bauthor{\bsnm{Singh}, \binits{A.}}:
\batitle{Traces of partition {E}isenstein series}.
\bjtitle{Forum Math.}
\bvolume{37}(\bissue{6}),
\bfpage{1835}--\blpage{1847}
(\byear{2025})
\doiurl{10.1515/forum-2024-0388}
\end{barticle}
\endbibitem

\bibitem[\protect\citeauthoryear{Rolen}{}]{Rollen}
\begin{botherref}
\oauthor{\bsnm{Rolen}, \binits{L.}}:
MODULAR FORMS LECTURE 17: PARTITIONS
\end{botherref}
\endbibitem

\bibitem[\protect\citeauthoryear{Koblitz}{1993}]{book:koblitz}
\begin{bbook}
\bauthor{\bsnm{Koblitz}, \binits{N.}}:
\bbtitle{Introduction to Elliptic Curves and Modular Forms},
\bedition{2}nd edn.
\bsertitle{Graduate Texts in Mathematics},
vol. \bseriesno{97},
p. \bfpage{248}.
\bpublisher{Springer},
\blocation{New York}
(\byear{1993}).
\doiurl{10.1007/978-1-4612-0909-6} .
\burl{https://doi.org/10.1007/978-1-4612-0909-6}
\end{bbook}
\endbibitem

\bibitem[\protect\citeauthoryear{Diamond and
  Shurman}{2005}]{book:Diamond-Shurman}
\begin{bbook}
\bauthor{\bsnm{Diamond}, \binits{F.}},
\bauthor{\bsnm{Shurman}, \binits{J.}}:
\bbtitle{A First Course in Modular Forms}.
\bsertitle{Graduate Texts in Mathematics},
vol. \bseriesno{228},
p. \bfpage{436}.
\bpublisher{Springer},
\blocation{New York}
(\byear{2005})
\end{bbook}
\endbibitem

\bibitem[\protect\citeauthoryear{Cohen and
  Str\"omberg}{2017}]{book:cohen-stromberg}
\begin{bbook}
\bauthor{\bsnm{Cohen}, \binits{H.}},
\bauthor{\bsnm{Str\"omberg}, \binits{F.}}:
\bbtitle{Modular Forms}.
\bsertitle{Graduate Studies in Mathematics},
vol. \bseriesno{179},
p. \bfpage{700}.
\bpublisher{American Mathematical Society, Providence, RI}, \blocation{???}
(\byear{2017}).
\doiurl{10.1090/gsm/179} .
\bcomment{A classical approach}.
\burl{https://doi.org/10.1090/gsm/179}
\end{bbook}
\endbibitem

\bibitem[\protect\citeauthoryear{Li et~al.}{2018}]{MR3848417}
\begin{barticle}
\bauthor{\bsnm{Li}, \binits{W.-C.W.}},
\bauthor{\bsnm{Long}, \binits{L.}},
\bauthor{\bsnm{Tu}, \binits{F.-T.}}:
\batitle{Computing special {$L$}-values of certain modular forms with complex
  multiplication}.
\bjtitle{SIGMA Symmetry Integrability Geom. Methods Appl.}
\bvolume{14},
\bfpage{090}--\blpage{32}
(\byear{2018})
\doiurl{10.3842/SIGMA.2018.090}
\end{barticle}
\endbibitem

\bibitem[\protect\citeauthoryear{{Maplesoft, a division of Waterloo Maple
  Inc..}}{}]{maple}
\begin{botherref}
\oauthor{\bsnm{{Maplesoft, a division of Waterloo Maple Inc..}}}:
Maple.
\url{https://hadoop.apache.org}
\end{botherref}
\endbibitem

\bibitem[\protect\citeauthoryear{{LMFDB Collaboration}}{2024}]{lmfdb}
\begin{botherref}
\oauthor{\bsnm{{LMFDB Collaboration}}, \binits{T.}}:
The {L}-functions and modular forms database.
\url{https://www.lmfdb.org}.
[Online; accessed 21 April 2024]
(2024)
\end{botherref}
\endbibitem

\bibitem[\protect\citeauthoryear{Silverman}{1994}]{book:silverman}
\begin{bbook}
\bauthor{\bsnm{Silverman}, \binits{J.H.}}:
\bbtitle{Advanced Topics in the Arithmetic of Elliptic Curves},
\bedition{1}st edn.
\bsertitle{Graduate Texts in Mathematics},
vol. \bseriesno{151}.
\bpublisher{Springer},
\blocation{New York}
(\byear{1994})
\end{bbook}
\endbibitem

\end{thebibliography}

\end{document}